\newtheorem{defin}{Definition}
\newtheorem{prop}[defin]{Proposition}
\newtheorem{lemma}[defin]{Lemma}
\newtheorem{thm}[defin]{Theorem}
\newtheorem{cor}[defin]{Corollary}
\newtheorem*{exe*}{Exercise}
\theoremstyle{definition}
\newtheorem{rmk}[defin]{Remark}
\newtheorem*{note}{Notation}
\newtheorem{ex}[defin]{Example}
\newtheorem*{ex*}{Example}
\newenvironment{romanenum}{%
\begin{enumerate}[label=(\roman*),ref=(\textup{\roman*})]}%
{\end{enumerate}} 
\newcommand{\Nat}{\mathbb{N}}
\newcommand{\Natp}{\mathbb{N}^+}
\newcommand{\Ax}{\mathsf{ax}}
\newcommand{\Raa}{\mathsf{raa}}
\newcommand{\Efq}{\mathsf{efq}}
\newcommand{\RAA}{\mathsf{RAA}}
\newcommand{\RAAp}{\mathsf{RAA}^+%{\geqslant 1}
}
\newcommand{\Intro}{\mathsf{i}}
\newcommand{\IntroOne}{\Intro_\mathsf{1}}
\newcommand{\IntroTwo}{\Intro_\mathsf{2}}
\newcommand{\ElimOne}{\Elim_\mathsf{1}}
\newcommand{\ElimTwo}{\Elim_\mathsf{2}}
\newcommand{\Elim}{\mathsf{e}}
\newcommand{\Dist}[1]{\mathsf{dist}_{#1}}
\newcommand{\Size}{\mathsf{size}}
\newcommand{\Sub}[2]{\{#1/#2\}}
\newcommand{\Nk}{\mathsf{NK}}
\newcommand{\Nj}{\mathsf{NJ}}
\newcommand{\Nm}{\mathsf{NM}}
\newcommand{\Transl}{\mathsf{m}}
\newcommand{\Translj}{\mathsf{j}}
\newcommand{\Var}{P}
\newcommand{\Vartwo}{Q}
\newcommand{\Varthree}{R}
\newcommand{\Ie}{\textup{i.e.}}
\newcommand{\Defeq}{\coloneqq}
\newcommand{\Mstandard}{$\mathsf{m}$-standard}
\newcommand{\Jstandard}{$\mathsf{j}$-standard}
\let\Gamma\varGamma
\let\Delta\varDelta
\let\Theta\varTheta
\let\Lambda\varLambda
\let\Xi\varXi
\let\Pi\varPi
\let\Sigma\varSigma
\let\Upsilon\varUpsilon
\let\Phi\varPhi
\let\Psi\varPsi
\let\Omega\varOmega
\newcommand\TITLE{Postponement of $\Raa$ and Glivenko's theorem, revisited (extended version)}
\begin{document}

\setcounter{page}{1} 
\twoAuthorsTitleoneline {Giulio Guerrieri}{Alberto Naibo}\TITLE

%%%%%%  AUTHOR'S ADDRESS INFORMATION AT THE END OF THE PAPER:
\newcommand\AFFILIATIONS{
 \AuthorAdressEmail{Giulio Guerrieri}{
   %Aix Marseille Universit\'e, CNRS\\
   %Centrale Marseille, I2M UMR 7373, F-13453\\
   %Marseille, France
   %}{giulio.guerrieri@univ-amu.fr}
   Department of Computer Science, University of Oxford \\
   Oxford, United Kingdom
   }{giulio.guerrieri@cs.ox.ac.uk}

 \AdditionalAuthorAddressEmail{Alberto Naibo}{
    IHPST (UMR 8590), Universit\'e Paris 1 Panth\'eon--Sorbonne\\
    CNRS, ENS\\
    Paris, France
    }{alberto.naibo@univ-paris1.fr}
}

\PresentedReceived{Name of Editor}{\today{}}

%\title{Postponement of $\Raa$ and Glivenko's theorem, revisited}

%\author[1]{Giulio Guerrieri}
%\author[2]{Alberto Naibo}
% \affil[1]{\small Aix Marseille Universit\'e, CNRS, Centrale Marseille, I2M UMR 7373, F-13453, Marseille, France\\ \href{mailto:giulio.guerrieri@univ-amu.fr}{\nolinkurl{giulio.guerrieri@univ-amu.fr}} }
% \affil[2]{\small IHPST (UMR 8590), Universit\'e Paris 1 Panth\'eon--Sorbonne, CNRS, ENS, Paris, France \href{mailto:alberto.naibo@univ-paris1.fr}{\nolinkurl{alberto.naibo@univ-paris1.fr}}}
% \author{}

%\date{}

%\maketitle
  
\begin{abstract}
This article focuses on the technique of postponing the application of the \textit{reduction ad absurdum} rule ($\Raa$) in classical natural deduction. First, it is shown how this technique is connected with two normalization strategies for classical logic: one given by Prawitz, and the other by Seldin. Secondly, a variant of Seldin’s strategy for the postponement of $\Raa$ is proposed, and the similarities with Prawitz’s approach are investigated. In particular, it is shown that, as for Prawitz, it is possible to use this variant of Seldin's strategy in order to induce a negative translation from classical to intuitionistic and minimal logic, which is nothing but a variant of Kuroda's translation. Through this translation, Glivenko's theorem for intuitionistic and minimal logic is proven.
%   We aim to show that any sequent provable in $\Nk$ is provable in $\Nk$ by using at most one instance of the rule $\Raa$ and this instance is the last rule of the derivation.

%  We aim to show that if a sequent $\Gamma \vdash A$ is provable in $\Nk \smallsetminus \{\forall_\Intro\}$ (first-order classical natural deduction without the intro-rule for universal quantifier), then there exists a derivation $\pi$ in $\Nk \smallsetminus \{\forall_\Intro\}$ of $\Gamma \vdash A$ using at most one instance of the rule $\Raa$ (\emph{reductio ad absurdum}) and this instance (if any) is the last rule of $\pi$, the remaining part of $\pi$ being a derivation in $\Nj \smallsetminus \{\forall_\Intro\}$ (first-order intuitionistic natural deduction without the intro-rule for universal quantifier) or, under suitable conditions, in $\Nm \smallsetminus \{\forall_\Intro\}$ (first-order minimal natural deduction without the intro-rule for universal quantifier).
%  As a consequence, we prove a strengthened form of Glivenko's theorem embedding first-order classical logic into 
%  the fragment $\{\neg, \wedge, \vee, \bot, \to, \exists\}$ of intutionistic logic and into 
%  the fragment $\{\neg, \wedge, \vee, \bot, \exists\}$ of minimal logic.
\end{abstract}

\Keywords{Proof theory, natural deduction, double-negation translation, Glivenko's theorem}

% \input{intro}
%\documentclass[main.tex]{subfiles}
% 
%\begin{document}

\section{Introduction}
\label{sect:intro}

Among the inference rules of classical natural deduction, the %rule of 
\emph{reductio ad absurdum} -- denoted by $\Raa$ -- formalizes the principle of a ``proof by contradiction'': if a contradiction is obtained %, i.e.~$â¥$, 
from $\lnot A$, then $A$ can asserted and the hypothesis $\lnot A$ can be dropped, \Ie~discharged.
This principle is rejected by intuitionism and, in general, by constructive accounts of logic. 
More precisely, $\Raa$ is not an admissible inference rule in intuitionistic natural deduction, even if the latter contains a special case of $\Raa$, called \emph{ex falso quodlibet} and denoted by $\Efq$. This rule formalizes the ``principle of explosion'': from a contradiction anything can be asserted, without %needing to discharge 
discharging any hypothesis. The rule $\Efq$, in turn, is not admissible in a more restrictive system of constructive logic, like minimal natural deduction.

For natural deduction of first-order classical logic (with the %rule of \textit{reductio ad absurdum} (
$\Raa$ rule) 
there are two general strategies for defining a weak normalization procedure: one due to Prawitz and one due to Seldin (see \cite[pp.~282--283]{PereiraHaeusler15} for a first comparison). 

Prawitz's idea \cite{Prawitz65} is to restrict to the fragment $\{\neg, \wedge, \to, \bot, \forall\}$, reduce all the applications of $\Raa$ to atomic formulas, and then apply whatever normalization strategy for intuitionistic logic one likes (see \cite[pp.~39--41]{Prawitz65}). 
Seldin's idea \cite{Seldin86}, on the other hand, is to restrict to the fragment $\{\neg, \land, \lor, \to,\allowbreak \bot, \exists\}$, reduce all the applications of $\Raa$ present in a derivation tree to at most one single application occurring as the last step of the tree (this is the \emph{postponement of $\Raa$}), and then apply whatever normalization strategy for intuitionistic logic one likes (see \cite[pp. 638--645]{Seldin86}).%
\footnote{Both Prawitz and Seldin consider that a normal derivation is a derivation in which there are no detours. However, the notion of detour specific to classical logic is not the same for them. We will analyze this point in detail in \S\ref{sect:overview}.}
%According to Prawitz \cite[p. 34]{Prawitz65}, a maximum formula specific to classical logic is a formula occurrence which is the conclusion of %an application of 
%a $\Raa$ and major premiss of an %application of an 
%elimination rule. According to Seldin \cite[p.~638]{Seldin86}, a maximum formula specific to classical logic is a formula occurrence which is the conclusion of %an application of 
%a $\Raa$ and major premiss of an %application of an 
%introduction or elimination rule.
Prawitz's and Seldin's strategies can be seen as ``dual'': the former breaks down classical reasoning into a number of atomic steps of $\Raa$, the latter compactifies classical reasoning into one single (possibly complex) step of $\Raa$, the final one. 
Moreover, a peculiarity of Seldin's strategy is that Glivenko's theorem (for intuitionistic logic) can be obtained as an immediate consequence: it is sufficient to drop the $\Raa$ rule of a normal (according to Seldin) derivation in classical logic, and replace it with a $\neg$-introduction rule discharging the same assumptions (see \cite[\S3]{Seldin86}).

Glivenko's theorem, in its original formulation \cite{Glivenko29}, states that if a formula is provable in classical propositional logic then its double negation is provable in intuitionistic propositional logic (the converse trivially holds). 
Thus, it allows propositional classical logic to be embedded into propositional intuitionistic logic.
Several refinements and generalizations of Glivenko's theorem are well-known (see \cite{Espindola13} for a partial survey): there are extensions of the result to first-order  \cite{Kleene52,Seldin86,Espindola13}, second-order \cite{Zdanowski09} and substructural logics \cite{GalatosOno06,Ono09,FarahaniOno12}, and there is an embedding of classical logic into minimal logic \cite{ErtolaSagastume08}.
All these results are obtained using different approaches, both syntactic and semantic.

Starting from a comparison of Prawitz's and Seldin's weak normalization strategies, we will %show that it is possible to 
% define a variant of Seldin's strategy inducing
modify Seldin's reduction steps for the postponement of $\Raa$ in order to 
% induce, on the one hand, two variants of Kuroda's negative translation \cite{Kuroda51,FerreiraOliva12} %for 
% of classical logic into intuitionistic and minimal logic; and, on the other hand, an elegant %generalization
% proof of Glivenko's theorem %to the case of minimal logic
% for both intuitionistic and minimal logic. 
achieve two goals: on the one hand, we induce two variants of Kuroda's negative translation \cite{Kuroda51,FerreiraOliva12,BrownRizkallah14} %for 
of first-order classical logic into intuitionistic and minimal logic; on the other hand, we give an elegant %generalization
proof of Glivenko's theorem %to the case of minimal logic
for both intuitionistic and minimal logic. 
% We believe that our results (or our presentation of already known results), obtained using a completely proof-theoretic approach, are interesting for at least the following reasons:
We obtain these results through a completely proof-theoretic approach. 
The three main reasons for the interest of this approach are:

\begin{enumerate}
  \item We point out that the postponement of $\Raa$ not only is an interesting result in itself, with remarkable consequences such as weak normalization and Glivenko theorems (as first observed by Seldin \cite{Seldin86}), but it allows them to be proved in a \emph{uniform} way in, at least, a triple sense:
  \begin{enumerate}
    \item we prove the postponement of $\Raa$ and its consequences for first-order classical logic, but our methods and techniques can also be applied in other systems, such as second-order classical logic (see point \ref{point:second-order} below), and the modal logics S4 and S5 with quantifiers;
    \item the postponement of $\Raa$ allows derivation of Glivenko's theorem for both intuitionistic and minimal logic using the same proof-theoretic approach based on our reduction steps; and
    \item the reduction steps we have defined are essentially variants of the ones used by Seldin, Prawitz and others (see \S\ref{sect:overview}) to prove  weak normalization for classical natural deduction.
  \end{enumerate}

  \item\label{point:second-order} Our proof of the postponement of $\Raa$ is proof-theoretic in a ``geometric'' way, in the sense that it relies on a notion of size for a derivation based only on the distance of the instances of $\Raa$ from the conclusion of the derivation; the complexity of formulas play no role in this definition of size.
  This approach has two immediate consequences:
  \begin{enumerate}
    \item it allows us to generalize the postponement of $\Raa$ and its corollaries to second-order classical logic, since the substitution of formulas for propositional variables does not impact the size we have defined;
    \item we prove the postponement of $\Raa$ in a weak form, in the sense that if we apply our reduction steps to \emph{suitable} instances of $\Raa$, then the size of the derivation decreases; but we strongly conjecture that, by refining this notion of size, our reduction steps allow the postponement of $\Raa$ in a strong sense, \Ie~they can be applied following whatever strategy one likes.
  \end{enumerate}

  \item Besides the atomization of $\Raa$ proposed by Prawitz's weak normalization strategy for classical natural deduction, which is deeply related to the Gödel--Gentzen negative translation, we show that the postponement of $\Raa$ induces (a variant of) Kuroda's negative translation%, which embeds first-order classical logic into intuitionistic and minimal logic
  .
\end{enumerate}

% The article will be structured into two parts. In the first part, we will present a survey of the relevant literature concerning the weak normalization strategies for classical logic. In the second part, we will establish a sort of unifying point of view for the normalization strategy \textit{\`a la} Seldin, based on the techniques of postponing the application of the $\Raa$ rule discussed in the survey part.
\subparagraph{Outline}
The article will be structured into two parts. In the first part, namely \S\ref{sect:overview}, we will present a survey of the relevant literature concerning the (weak) normalization strategies for classical natural deduction. 
The second part presents our technical contributions.
More precisely, \S\ref{sect:preliminairies} is devoted to basic definitions for first-order language and natural deduction.
In \S\ref{sect:rewrite} we will introduce our reduction steps, and in \S\ref{sect:postponement} we will use them to prove the postponement of $\Raa$.
Finally, in \S\ref{sect:Glivenko}, we will expose the relationship with Kuroda's translation, and the Glivenko theorems for intuitionistic and minimal logic.

%\end{document} %1

%\documentclass[main.tex]{subfiles} %Section 2
% 
%\begin{document}
\section{\texorpdfstring{Normalization of classical logic: an overview}{Normalization of classical logic: an overview}}
\label{sect:overview}

Looking closer at the distinction between Prawitz's and Seldin's weak normalization strategies, we note that actually this distinction is not as sharp as it might appear at a first glance. In particular, each of these strategies can be exploited to eliminate classical detours by postponing the use of $\Raa$. Let us clarify this point.
%There exists, in particular, variants of Prawitz's strategy which look very much like Seldin's one. Let us clarify this point.

\subsection{Prawitz's (weak) normalization strategy: its legacy (1965-2012)}

As already mentioned, Prawitz's original weak normalization strategy for classical natural deduction \cite{Prawitz65} was conceived only for the fragment $\{\neg, \land, \allowbreak \to, \bot, \forall\}$, which is adequate for the full first-order language $\mathcal{L}$ of classical logic.%
% \footnote{Note that even Prawitz considers the rules for negation $\neg$, this is not treated as a primitive operator, but as defined by $\to$ and $\bot$, \Ie~$\neg A \Defeq A \to \bot$. This is common to all the other approaches that will be analyzed in this section. On the contrary, as we will see in \S\ref{sect:preliminairies}, our approach is different.} 
\footnote{In Prawitz \cite{Prawitz65}, as well as in the other approaches that will be analyzed in this section, negation $\neg$ is not treated as a primitive operator, but is defined by %$\to$ and $\bot$, \Ie~
$\neg A \Defeq A \to \bot$, and its introduction and elimination rules are special cases of the introduction and elimination rules for implication. As we will see in \S\ref{sect:preliminairies}, our approach is different.} 
One of the challenges that arose from Prawitz's work was to find a weak normalization strategy for the full language $\mathcal{L}$. %the full language of classical logic. 
Specifically, the idea %, in particular, 
was to prove this result by somehow relaxing Prawitz's strategy: instead of proving the atomization of all the $\Raa$ occurrences present in a given derivation, it is sufficient to eliminate all classical detours, where a \emph{classical detour} is defined as an instance of the $\Raa$ rule that introduces a formula occurrence $A$ and is immediately followed by an instance of an elimination rule having $A$ as major premiss. 
In this paper, such detours will be called classical detours \textit{\`a la} Prawitz, since they %are directly linked with 
correspond to the definition of maximum formula given by Prawitz %in the case of classical logic (see \cite[p. 34]{Prawitz65}).
in \cite[p. 34]{Prawitz65}.
%\footnote{In other words, the idea is that we have a detour because $\Raa$ plays the role of an introduction rule for the principal connective of $A$.}

In order to better appreciate this departure from Prawitz's original strategy, we give an example. Prawitz observes that the following derivation contains a complex instance of $\Raa$

\begin{equation}
\label{conjpraw}
	\begin{prooftree}
		\Hypo{\ulcorner \neg (A \wedge B)\urcorner^{1}}
		\Ellipsis{$\pi$}{\;\bot\;}
		\Infer1[\footnotesize$\Raa^{1}$]{A \wedge B}
		 \Ellipsis{$\pi_{0}$}{}
	\end{prooftree}
\end{equation}

\noindent and so needs to be reduced to a derivation with less complex instances of $\Raa$, \Ie

\begin{equation}
\label{conjprawred}
\begin{prooftree}
       \Hypo{\ulcorner \lnot A \urcorner^2}
       \Hypo{\ulcorner A \land B \urcorner^1}
       \Infer1[\footnotesize$\land_{\ElimOne}$]{A}
       \Infer2[\footnotesize$\lnot_\Elim$]{\;\bot\;}
       \Infer1[\footnotesize$\lnot_\Intro^1$]{\lnot (A \land B)}
       \Ellipsis{$\pi$}{\; \bot \;}
       \Infer1[\footnotesize$\Raa^2$]{\;A\;}
       \Hypo{\ulcorner \lnot B \urcorner^4}
       \Hypo{\ulcorner A \land B \urcorner^3}
       \Infer1[\footnotesize$\land_{\ElimTwo}$]{B}
       \Infer2[\footnotesize$\lnot_\Elim$]{\;\bot\;}
       \Infer1[\footnotesize$\lnot_\Intro^3$]{\lnot (A \land B)}
       \Ellipsis{$\pi$}{\; \bot \;}
       \Infer1[\footnotesize$\Raa^4$]{\;B\;}
       \Infer2[\footnotesize$\wedge_\Intro$]{A \wedge B}
       \Ellipsis{$\pi_{0}$}{}
 \end{prooftree}
\end{equation}

But is it necessary to reduce all complex instances of $\Raa$ (\Ie~the instances of $\Raa$ whose conclusions are non-atomic formulas)? Maybe it would be enough to focus only on a particular %kind of complex instances of $\Raa$: 
subset of them:
% those having as conclusion a formula $A$ which is the major premiss of an elimination rule. 
the ones forming classical detours \textit{\`a la} Prawitz.
The reason is that only in this situation we are creating a rules' configuration which is similar to the standard (intuitionistic) detours of the form $\circ$-introduction/$\circ$-elimination (for a certain connective $\circ$)%. In particular, it is 
: the rule $\Raa$ %that 
plays the role of an introduction rule. Thus, instead of (\ref{conjpraw}), we could consider

\begin{equation}
\label{conjex}
	\begin{prooftree}
		\Hypo{\ulcorner \neg (A \wedge B)\urcorner^{1}}
		\Ellipsis{$\pi$}{\;\bot\;}
		\Infer1[\footnotesize$\Raa^{1}$]{A \wedge B}
		\Infer1[\footnotesize$\wedge_{\ElimOne}$]{A}
		 \Ellipsis{$\pi_{0}$}{}
	\end{prooftree}
\end{equation}

\noindent and reduce it to

\begin{equation}
\label{conjexred}
 \begin{prooftree}
       \Hypo{\ulcorner \lnot A \urcorner^2}
       \Hypo{\ulcorner A \land B \urcorner^1}
       \Infer1[\footnotesize$\land_{\ElimOne}$]{A}
       \Infer2[\footnotesize$\lnot_\Elim$]{\;\bot\;}
       \Infer1[\footnotesize$\lnot_\Intro^1$]{\lnot (A \land B)}
       \Ellipsis{$\pi$}{\; \bot \;}
       \Infer1[\footnotesize$\Raa^2$]{\;A\;}
       \Ellipsis{$\pi_{0}$}{}
 \end{prooftree}
\end{equation}
% \noindent(Similarly, if the conclusion of the elimination rule for $\land$ is $B$, \Ie~if the rule $\land_{\ElimTwo}$ is used instead of $\land_{\ElimOne}$.)

It is worth noticing that (\ref{conjexred}) is nothing but a subderivation of (\ref{conjprawred}). However, adopting this second kind of reduction does not mean that we are simply applying a special case of Prawitz's original strategy. There is indeed a crucial difference between the two reductions presented here.
%A crucial difference can be detected between the two kinds of reduction strategies we have just presented. 
In the reduced derivation (\ref{conjprawred}), the distance from the conclusion of the two instances of $\Raa$ is one unit greater than the distance from the conclusion of the instance of $\Raa$ which is present in the original derivation (\ref{conjpraw}). 
In contrast, in the reduced derivation (\ref{conjexred}), the distance from the conclusion of the instance of $\Raa$ is one unit smaller that the distance from the conclusion of the instance of $\Raa$ in the original derivation (\ref{conjex}). 
In other words, where Prawitz's original strategy (\ref{conjpraw})-(\ref{conjprawred}) brings the application of $\Raa$ forward, the second strategy (\ref{conjex})-(\ref{conjexred}) postpones the application of $\Raa$.

\subsubsection{Statman's approach (1974)}

As far as we know, the first to take into consideration this postponing strategy for solving the normalization problem of classical natural deduction for the \emph{full} first-order language was Statman \cite{Statman74}. He considered, in particular, the following general reduction scheme for classical detours \textit{\`a la} Prawitz:

{\small
\begin{equation}
\label{statmanred}
\begin{prooftree}
	\Hypo{\ulcorner \lnot A \urcorner^1}
	\Ellipsis{$\pi$}{\; \bot \;}
	\Infer1[\scriptsize$\Raa^1$]{A}
	 \Hypo{}
	 \Ellipsis{$\pi'$}{ }
	\Infer2[\scriptsize$\circ_\Elim$]{C}
	\Ellipsis{$\pi_0$}{}
\end{prooftree}
\qquad
\begin{textnormal}{\normalsize reduces to}\end{textnormal}
\qquad
\begin{prooftree}
	  \Hypo{\ulcorner \lnot C \urcorner^2}
	   \Hypo{\ulcorner A \urcorner^1}
	   \Hypo{}
	   \Ellipsis{$\pi'$}{}
	  \Infer2[\scriptsize$\circ_\Elim$]{C}
	  \Infer2[\scriptsize$\lnot_\Elim$]{\bot}
	  \Infer1[\scriptsize$\lnot_\Intro^1$]{\lnot A}
	  \Ellipsis{$\pi$}{\; \bot \;}
	  \Infer1[\scriptsize$\Raa^2$]{C}
	\Ellipsis{$\pi_0$}{}
\end{prooftree}
\end{equation}
}

  \noindent where $\circ_\Elim$ is an elimination rule for any connective $\circ$ of the full language of first-order logic (see \cite[pp.~78--79]{Statman74}) and $A$ is the major premiss of $\circ_\Elim$.%
  \footnote{The fact that, in the derivation $\Pi$ on the left-hand side of \eqref{statmanred}, $A$ is the major premiss of $\circ_\Elim$ ensures that $\circ_\Elim$ does not discharge any assumptions of the subderivation $\pi$, hence all assumptions in the derivation on the right-hand side of \eqref{statmanred} are already assumptions of $\Pi$. See also \S\ref{subsect:seldinstrategy} below.}
  This means that, unlike Prawitz, Statman does not have to drop disjunction and the existential quantifier in order to apply his reduction steps. 
  However, instead of reasoning in a pure combinatorial way on the application of the reduction steps -- as previously done by Prawitz -- Statman adopts a different approach, which basically consists of two phases. 
%A first solution going in this direction was approached by Statman \cite{Statman74}.\footnote{Notice that the classical detours considered by Statman are exactly those corresponding to a $\Raa$ followed by a rule $R$, where $R$ is an elimination rule and the conclusion of $\Raa$ is the major premiss of $R$. See \cite[pp.~78]{Statman74} for the definition of the reduction step for classical detours, and \cite[pp.~79]{Statman74} for the formulation of the condition on $R$ to be an elimination rule.} His solution consists of two steps. 
First, second-order predicate classical logic is considered and embedded into a system of second-order intuitionistic propositional logic by using a homomorphism which preserves the reduction relations. Secondly, a (strong) normalization theorem for this intuitionistic system is proved. To prove this theorem, impredicative methods are used. 

Besides the use of impredicative methods, there is %, however, 
another problem which could make Statman's approach not completely satisfactory: 
% the application of  $\lor$- and $\exists$-elimination rules is restricted to the case in which the conclusion is $\bot$ (see \cite[p.~90]{Statman74}). 
the $\lor$- and $\exists$-elimination rules can only be applied in the restricted case in which their conclusion is $\bot$ (see \cite[p.~90]{Statman74}), \Ie
% For example, in the case of disjunction the following elimination rule is used:

\begin{center}
\begin{prooftree}
	\Hypo{A \lor B}
	\Hypo{\ulcorner A \urcorner^1}
	\Ellipsis{}{\bot}
	\Hypo{\ulcorner B \urcorner^1}
	\Ellipsis{}{\bot}
	\Infer3[\footnotesize${\lor'}_\Elim^{1}$]{\bot}
\end{prooftree}
\qquad{\normalsize and }\qquad
\begin{prooftree}
	\Hypo{\exists x A}
	\Hypo{\ulcorner A \urcorner^1}
	\Ellipsis{}{\bot}
	\Infer2[\footnotesize${\exists'}_\Elim^{1}$]{\bot}
\end{prooftree}
\end{center}

\noindent Statman's solution seems then to lack of generality and uniformity.

Some arguments can be invoked to justify Statman's choice to work with these restricted versions of the $\lor$- and $\exists$-elimination rules. 
For simplicity, let us restrict to the case of disjunction. 
First, it should be noted that, within the framework of classical logic, the usual $\lor$-elimination rule -- \Ie~$\lor_\Elim$ -- and the restricted version used by Statman -- \Ie~$\lor'_\Elim$ -- are equivalent from the point of view of derivability. 
%In other words,
Indeed, $\lor'_\Elim$ is just a special case of $\lor_\Elim$, %and \textit{vice versa} :\label{stator} 
%${\lor'}_\Elim$ is obtained from $\vee_\Elim$ by taking $C$ equal to $\bot$, while $\vee_\Elim$ is obtained from ${\lor'}_\Elim$ by the following derivation
and conversely, $\lor_\Elim$ is classically derivable from $\lor_\Elim'$:%, namely
\label{stator} 
\begin{center}
\begin{prooftree}
	\Hypo{A \lor B}
	\Hypo{\ulcorner \neg C \urcorner^2}
	\Hypo{\ulcorner A \urcorner^1}
	\Ellipsis{}{C}
	\Infer2[\footnotesize$\neg_\Elim$]{\bot}
	\Hypo{\ulcorner \neg C \urcorner^2}
	\Hypo{\ulcorner B \urcorner^1}
	\Ellipsis{}{C}
	\Infer2[\footnotesize$\neg_\Elim$]{\bot}
	\Infer3[\footnotesize${\lor'_\Elim}^{1}$]{\bot}
	\Infer1[\footnotesize$\Raa^2$]{C}
\end{prooftree}
% \qquad
% {\normalsize and}
% \qquad
% \begin{prooftree}
% 	\Hypo{A \lor B}
% 	\Hypo{\ulcorner A \urcorner^1}
% 	\Ellipsis{}{\bot}
% 	\Hypo{\ulcorner B \urcorner^1}
% 	\Ellipsis{}{\bot}
% 	\Infer3[\footnotesize${\lor'}_\Elim^{1}$]{\bot}
% 	\Infer1[\footnotesize$\Efq$]{C}
% \end{prooftree}
\end{center}

The restriction imposed on the $\lor$-elimination rule can also be explained by a second argument. 
In his proof of (strong) normalization for classical natural deduction, Statman passes through an intermediate step: he shows that the set of classical derivations can be embedded into the subset of classical derivations not containing $\lor$ and $\exists$, and that this embedding preserves the reduction relations between derivations. 
This means that, unlike Prawitz, Statman does not start by replacing the full first-order language with the fragment $\{\neg, \wedge, \to, \bot, \forall\}$ %, by invoking the fact that the theorems stated in the two languages can be proved to be logically equivalent (with respect to classical logic)
(which is still adequate for classical logic).
%More precisely, 
In other words, Statman is not reasoning at the level of the derivability relation between sentences, but at the level of the reduction relations between derivations. Thus, strictly speaking, he maintains the full language of classical logic and uses an operation $m_{0}$ -- corresponding to Gentzen's negative translation -- just to narrow down the set of derivations which have to be analyzed with respect to the reduction relations.

In such a context, disjunction can be defined in terms of negation and conjunction, so that $m_{0}(A \lor B) \Defeq \neg (\neg m_{0}(A) \land \neg m_{0}(B))$.\footnote{Note that Statman composes then the operation $m_{0}$ with another operation $m_{1}$ -- corresponding to the so-called Russell--Prawitz translation \cite[p.~67]{Prawitz65} -- in order to further narrow down his analysis to the set of derivations and reductions which use only implication and the second-order universal quantifier (see \cite[p.~91--92]{Statman74}).} 
And if we have two derivations of the form
\begin{center}
\begin{prooftree}
	\Hypo{m_{0}(A)}
	\Ellipsis{}{\bot}
\end{prooftree}
\qquad
{\normalsize and}
\qquad
\begin{prooftree}
	\Hypo{m_{0}(B)}
	\Ellipsis{}{\bot}
\end{prooftree}
\end{center}
we could obtain $\neg m_{0}(A) \land \neg m_{0}(B)$ (namely, by discharging $m_{0}(A)$ and $m_{0}(B)$ through $\neg$-introduction rules, and applying then a $\land$-introduction), so that in presence of $\neg (\neg m_{0}(A) \land \neg m_{0}(B))$ -- which, as we said, is equal to $m_{0}(A \lor B)$ -- we can conclude $\bot$.\label{statrans} 
This shows that the rule 
\begin{center}
\begin{prooftree}
	\Hypo{m_{0}(A \lor B)}
	\Hypo{\ulcorner m_{0}(A)\urcorner^1}
	\Ellipsis{}{\bot}
	\Hypo{\ulcorner m_{0}(B)\urcorner^1}
	\Ellipsis{}{\bot}
	\Infer3[\footnotesize$S^{1}$]{\bot}
\end{prooftree}
\end{center}
is derivable. And this is nothing but a way to say that working with a restricted version of the $\lor$-elimination rule -- where $\bot$ is the conclusion -- does not represent a real limitation if one works modulo $m_{0}$: the rule $S$ and the rule $\lor'$ share the same inferential structure. In other words, the operation $m_{0}$ is an homomorphism which preserves the inferential structure.

\subsubsection{St{\aa}lmarck's approach (1991)}

%As we anticipated, 
Being conceived for second-order logic, Statman's approach rests on an impredicative analysis of derivations and of their reduction relations. An alternative approach %trying to overcome the problem of 
to the (weak) normalization of classical natural deduction for the full first-order language using only pure combinatorial means was proposed by St{\aa}lmarck \cite{Stalmarck91}.
%a fundamental problem of Statman's approach is that, since it is conceived for second order logic, it rests on an impredicative analysis of derivations and of their reduction relations. A solution which tries to overcome this problem, and which rests on purely combinatorial means, was first proposed by St{\aa}lmarck \cite{Stalmarck91}. 

Like Statman, St{\aa}lmarck's fundamental idea consists of removing the maximum formulas which appear in classical detours \textit{\`a la} Prawitz by pushing $\Raa$ downwards with respect to the elimination rules for the connectives of the full first-order language% of classical logic.
. But, unlike Statman, St{\aa}lmarck has no restrictions on $\lor$- and $\exists$-elimination rules.%
\footnote{Like Statman, Stålmarck's main interest is a strong normalization theorem for classical logic. %However, since this article 
Since we mainly focuses on the problem of the downward postponement of $\Raa$, it is %already 
sufficient to consider here only St{\aa}lmarck's weak normalization strategy \cite[pp.~133--135]{Stalmarck91}.} 
The problem for St{\aa}lmarck is that, by using the reduction scheme (\ref{statmanred}), when $\circ$ is a disjunction or an existential quantifier, the formula $C$ introduced by $\Raa$ in the reduced derivation could be more complex than the formula $A$ introduced by $\Raa$ in the original derivation. 
Thus, \textit{a priori} there is nothing which tells him when to stop the reduction of the classical detours. 
In particular, in order to prove the weak normalization of classical natural deduction for the full first-order language, % of classical logic, 
St{\aa}lmarck cannot proceed like Prawitz in two phases, that is, first, by atomizing all the occurrences of $\Raa$, and secondly, by applying any normalization strategy for intuitionistic logic. On the contrary, St{\aa}lmarck has to treat classical and intuitionistic detours on a par, and he has to eliminate them together by (a main) induction on the complexity of the maximum formulas of the detours.

The problem is that this method could conflict with certain applications of the reduction scheme (\ref{statmanred}). 
Consider the case of conjunction. In (\ref{conjexred}), the formula $\neg (A \land B)$ introduced by a $\lnot_\Intro$ rule could be the major premiss of a $\lnot_\Elim$ rule; it could thus create a new detour with a maximum formula more complex than the original one (namely $A \land B$, associated with the classical detour). Indeed, in this case the derivation \eqref{conjexred} has the form

{\small
\begin{center}
 \begin{prooftree}
       \Hypo{\ulcorner \lnot A \urcorner^2}
       \Hypo{\ulcorner A \land B \urcorner^1}
       \Infer1[\footnotesize$\land_{\ElimOne}$]{A}
       \Infer2[\footnotesize$\lnot_\Elim$]{\;\bot\;}
       \Infer1[\footnotesize$\lnot_\Intro^1$]{\lnot (A \land B)}
       \Hypo{}
       \Ellipsis{$\pi_1$}{A \land B}
       \Infer2[\footnotesize$\lnot_\Elim$]{\;\bot\;}
       \Ellipsis{$\pi_2$}{\; \bot \;}
       \Infer1[\footnotesize$\Raa^2$]{\;A\;}
       \Ellipsis{$\pi_{0}$}{}
 \end{prooftree}
\end{center}
}

\noindent and, according to the reduction scheme \eqref{statmanred}, %the original derivation -- \Ie~
the derivation we have before the application of the reduction step %-- 
is the following, where the maximum formula of the classical detour is $A \land B$:

{\small
\begin{equation}
\label{intdetconj}
\begin{prooftree}
	\Hypo{\ulcorner \lnot (A \land B) \urcorner^1}
	\Hypo{}
	\Ellipsis{$\pi_1$}{A \land B}
	\Infer2[\footnotesize$\lnot_\Elim$]{\; \bot \;}
	\Ellipsis{$\pi_2$}{\;\bot\;}
	 \Infer1[\footnotesize$\Raa^1$]{A \land B}
	 \Infer1[\footnotesize$\land_{\ElimOne}$]{A}
	 \Ellipsis{$\pi_0$}{}
\end{prooftree} 
\end{equation}
}

St{\aa}lmarck's solution consists %then 
in adding to \eqref{statmanred} a new reduction scheme which transforms (\ref{intdetconj}) into

{\small
\begin{center}
\begin{prooftree}
	\Hypo{\ulcorner \lnot A \urcorner^1}
	\Hypo{\ulcorner \lnot A \urcorner^1}
	\Hypo{}
	\Ellipsis{$\pi_1$}{A \land B}
	\Infer1[\footnotesize$\land_{\ElimOne}$]{A}
	\Infer2[\footnotesize$\lnot_\Elim$]{\; \bot \;}
	\Ellipsis{$\pi_2$}{\;\bot\;}
	\Infer1[\footnotesize$\Efq$]{A \land B}
	\Infer1[\footnotesize$\land_{\ElimOne}$]{A}
	\Infer2[\footnotesize$\lnot_\Elim$]{\; \bot \;}
	\Infer1[\footnotesize$\Raa^1$]{\;A\;}
	\Ellipsis{$\pi_0$}{}
\end{prooftree} 
\end{center}
}
% \noindent(Similarly, if the conclusion of the elimination rule for $\land$ is $B$, \Ie~if the rule $\land_{\ElimTwo}$ is used instead of $\land_{\ElimOne}$.)

In this way, the occurrence of $\Raa$ is not only pushed downward, but also applied to a formula of lower complexity than the original one, respecting in this sense Prawitz's original idea. 
%St{\aa}lmarck's solution consists in considering another kind of reductions for classical detours (for the fragment $\{\neg, \wedge, \to, \bot, \forall\}$) in which the occurrence of $\Raa$ is not only permuted downward, but also applied to a formula of lower complexity than the original one, respecting in this sense Prawitz's original idea.  More precisely, the derivation
%
%{\small
%\begin{center}
%\begin{prooftree}
%	\Hypo{\ulcorner \lnot (A \land B) \urcorner^1}
%	\Hypo{}
%	\Ellipsis{$\pi$}{A \land B}
%	\Infer2[\footnotesize$\lnot_\Elim$]{\; \bot \;}
%	\Ellipsis{$\pi'$}{\;\bot\;}
%	 \Infer1[\footnotesize$\Raa^1$]{A \land B}
%	 \Infer1[\footnotesize$\land_{\ElimOne}$]{A}
%	 \Ellipsis{$\pi_0$}{}
%\end{prooftree} 
%\qquad
%{\normalsize is reduced to}
%\qquad
%\begin{prooftree}
%	\Hypo{\ulcorner \lnot A \urcorner^1}
%	\Hypo{\ulcorner \lnot A \urcorner^1}
%	\Hypo{}
%	\Ellipsis{$\pi$}{A \land B}
%	\Infer1[\footnotesize$\land_{\ElimOne}$]{A}
%	\Infer2[\footnotesize$\lnot_\Elim$]{\; \bot \;}
%	\Ellipsis{$\pi'$}{\;\bot\;}
%	\Infer1[\footnotesize$\Efq$]{A \land B}
%	\Infer1[\footnotesize$\land_{\ElimOne}$]{A}
%	\Infer2[\footnotesize$\lnot_\Elim$]{\; \bot \;}
%	\Infer1[\footnotesize$\Raa^1$]{\;A\;}
%	\Ellipsis{$\pi_0$}{}
%\end{prooftree} 
%\end{center}
%}
%\noindent Similarly, when the conclusion of the elimination rule for $\land$ is $B$, i.e. when the rule $\land_{\ElimTwo}$ is used instead of $\land_{\ElimOne}$.
However, unlike Prawitz's atomization procedure, in order to define this reduction, a step of $\Efq$ has to be added. This represents an important point on which we will return later (see \S\ref{subsect:seldinstrategy}).

The other difference to Prawitz's procedure is that, by considering the full first-order language, St{\aa}lmarck also has to define reduction steps for classical detours \textit{\`a la} Prawitz created by a $\Raa$ immediately followed by a $\vee$- or $\exists$-elimination rule. Let us focus on the $\lor$ case.

As we have seen for conjunction, %if we define a straightforward postponement of $\Raa$ with respect to $\lor_\Elim$ following the reduction scheme \eqref{statmanred}, 
if we follow the reduction scheme \eqref{statmanred} to define a postponement of $\Raa$ with respect to $\lor_\Elim$, we would have that
 
% {\footnotesize
% \begin{equation}
% \label{stalredisj1}
% \begin{prooftree}
% 	\Hypo{\ulcorner \neg(A \vee B) \urcorner^1}
% 	\Ellipsis{$\pi$}{\bot}
% 	\Infer1[\footnotesize$\Raa^1$]{A \vee B}
% 	\Hypo{\ulcorner A\urcorner^2}
% 	\Ellipsis{$\pi'$}{C}
% 	\Hypo{\ulcorner B\urcorner^2}
% 	\Ellipsis{$\pi''$}{C}
% 	\Infer3[\footnotesize$\lor_\Elim^{2}$]{C}
% 	\Ellipsis{$\pi_{0}$}{}
% \end{prooftree}
% \qquad
% \begin{textnormal}{\normalsize reduces to}\end{textnormal}
% \qquad
% \begin{prooftree}
% 	\Hypo{\ulcorner A \vee B\urcorner^2}
% 	\Hypo{\ulcorner \neg C\urcorner^3}
% 	\Hypo{\ulcorner A\urcorner^1}
% 	\Ellipsis{$\pi'$}{C}
% 	\Infer2[\footnotesize$\neg_\Elim$]{\bot}
% 	\Hypo{\ulcorner \neg C\urcorner^3}
% 	\Hypo{\ulcorner B\urcorner^1}
% 	\Ellipsis{$\pi''$}{C}
% 	\Infer2[\footnotesize$\neg_\Elim$]{\bot}
% 	\Infer3[\footnotesize$\vee_\Elim^{1}$]{\bot}
% 	\Infer1[\footnotesize$\neg_\Intro^2$]{\neg (A \vee B)}
% 	\Ellipsis{$\pi$}{\bot}
% 	\Infer1[\footnotesize$\Raa^3$]{C}
% 	\Ellipsis{$\pi_{0}$}{}
% \end{prooftree}
% \end{equation}
% }\label{stalred}
% or, following the reduction scheme \eqref{statmanred}, 
{\footnotesize
\begin{equation}
\label{stalredisj1}
\begin{prooftree}[label separation = 0.3em]
	\Hypo{\ulcorner \neg(A \vee B) \urcorner^1}
	\Ellipsis{$\pi$}{\bot}
	\Infer1[\scriptsize$\Raa^1$]{A \vee B}
	\Hypo{\ulcorner A\urcorner^2}
	\Ellipsis{$\pi'$}{C}
	\Hypo{\ulcorner B\urcorner^2}
	\Ellipsis{$\pi''$}{C}
	\Infer[separation = 0.9em]3[\scriptsize$\lor_\Elim^{2}$]{C}
	\Ellipsis{$\pi_{0}$}{}
\end{prooftree}
\qquad
\begin{textnormal}{\small reduces to}\end{textnormal}
\qquad
\begin{prooftree}[label separation = 0.3em]
	\Hypo{\ulcorner \neg C\urcorner^3}
	\Hypo{\ulcorner A \vee B\urcorner^2}
	\Hypo{\ulcorner A\urcorner^1}
	\Ellipsis{$\pi'$}{C}
	\Hypo{\ulcorner B\urcorner^1}
	\Ellipsis{$\pi''$}{C}
	\Infer[separation = 0.9em]3[\scriptsize$\vee_\Elim^{1}$]{C}
	\Infer[separation = 0.5em]2[\scriptsize$\neg_\Elim$]{\bot}
	\Infer1[\scriptsize$\neg_\Intro^2$]{\neg (A \vee B)}
	\Ellipsis{$\pi$}{\,\bot\,}
	\Infer1[\scriptsize$\Raa^3$]{C}
	\Ellipsis{$\pi_{0}$}{}
\end{prooftree}
\end{equation}
}\label{stalred}

\noindent and thus, when in the subderivation $\pi$ the assumption $\lnot(A \lor B)$ is the major premiss of a $\lnot_\Elim$, we could create a new detour %of the type
$\lnot_\Intro$/$\lnot_\Elim$, having $\neg (A \lor B)$ as maximum formula, which is more complex than the maximum formula eliminated by the reduction step itself, namely $A \lor B$. 
However, when we are dealing with disjunction, the solution proposed by St{\aa}lmarck for conjunction is no longer sufficient to circumvent this problem. More precisely, the derivation

{\footnotesize
\begin{equation}
\label{stalredisj2}
\begin{prooftree}[label separation = 0.2em, separation = 0.6em]
	\Hypo{\ulcorner \lnot (A \lor B) \urcorner^1}
	\Hypo{}
	\Ellipsis{$\pi_1$}{A \lor B}
	\Infer2[\scriptsize$\lnot_\Elim$]{\; \bot \;}
	\Ellipsis{$\pi_2$}{\;\bot\;}
	 \Infer1[\scriptsize$\Raa^1$]{A \lor B}
	 \Hypo{\!\ulcorner A \urcorner^2}
	 \Ellipsis{$\pi'$}{C}
	 \Hypo{\,\ulcorner B \urcorner^2}
	 \Ellipsis{$\pi''$}{C}
	 \Infer3[\scriptsize$\lor_{\Elim}^{2}$]{C}
	 \Ellipsis{$\pi_0$}{}
\end{prooftree} 
\ \ 
\begin{textnormal}{\small would reduce to}\end{textnormal}
\ 
\begin{prooftree}[label separation = 0.2em, separation = 0.6em]
	\Hypo{\ulcorner \lnot C \urcorner^2}
	\Hypo{}
	\Ellipsis{$\pi_1$}{A \lor B}
	 \Hypo{\ulcorner A \urcorner^1}
	 \Ellipsis{$\pi'$}{C}
	 \Hypo{\,\ulcorner B \urcorner^1}
	 \Ellipsis{$\pi''$}{C}
	  \Infer3[\scriptsize$\lor_{\Elim}^{1}$]{C}	  
	  \Infer2[\scriptsize$\lnot_\Elim$]{\;\bot\;}
	  \Ellipsis{$\pi_2$}{\;\bot\;}
	  \Infer1[\scriptsize$\Raa^2$]{C}
	   \Ellipsis{$\pi_0$}{}
\end{prooftree}
\end{equation}
}

\noindent but since $C$ could be any compound formula, it could create a new classical detour (when $C$ in $\pi_0$ is the major premiss of an elimination rule) more complex than the one just eliminated.\footnote{Note also that in this reduction for $\lor_\Elim$, differently from the $\land$ case, the appeal to the $\Efq$ rule would be pointless. If $\Efq$ was used after the $\bot$ of the $\pi$ derivation in order to obtain $A \lor B$, then another step of $\lor_\Elim$ would be needed. This would force one to conclude $C$, and thus to use $\neg_\Elim$ again\,---\,with $\neg C$ as major premiss\,---\,followed by the $\pi$ derivation. Hence, the result would be nothing but a repeated duplication of the original reduction pattern.}

%In order to solve this problem, St{\aa}lmarck's idea is to decompose the reduction for the $\lor$ case into two steps. The first step consists in operating a \textit{permutative conversion} between the $\vee_\Elim$ and the $\neg_\Elim$ in the reduced derivation of (\ref{stalredisj2}) \cite[p.~136]{Stalmarck91}.\footnote{See also \cite[p.~184]{deGroote01}. The characterization of permutative conversions (or reductions) is given in \cite[p.~51]{Prawitz65} and \cite[p.~253]{Prawitz71}.} This means that the derivation ....

% in a way inspired by Statman's treatment of $\lor_\Elim$ (see p.~\pageref{stator}). 

In order to solve this problem, St{\aa}lmarck decomposes the reduction for the $\lor$ case into two steps. 
First, he starts with a sort of expansion of the $\lor_\Elim$ rules present in a given derivation, where $\bot$ plays the role of a minimum formula in the two threads starting from the assumptions $A$ and $B$, respectively, and the end-formula $C$ (when $C \neq \bot$) is introduced by $\Raa$,\footnote{We use here the term `thread' in the sense of Prawitz \cite[p.~25]{Prawitz65} (see also Definition~\ref{def:distance}, \textit{infra}). For the definition of expansion and minimum formula see Prawitz (\cite[part II.3]{Prawitz71}).}\label{threadef} \Ie~the derivation

{\footnotesize
\begin{equation*}
\begin{prooftree}[label separation = 0.2em, separation = 0.3em]
	\Hypo{\ulcorner \!\lnot (A \lor B) \urcorner^1}
	\Hypo{}
	\Ellipsis{$\pi_1$}{A \lor B}
	\Infer2[\footnotesize$\lnot_\Elim$]{\; \bot \;}
	\Ellipsis{$\pi_2$}{\;\bot\;}
	 \Infer1[\footnotesize$\Raa^1$]{A \lor B}
	 \Hypo{\ulcorner A \urcorner^2}
	 \Ellipsis{$\pi'$}{C}
	 \Hypo{\,\ulcorner B \urcorner^2}
	 \Ellipsis{$\pi''$}{C}
	 \Infer3[\footnotesize$\lor_{\Elim}^{2}$]{C}
	 \Ellipsis{$\pi_0$}{}
\end{prooftree} 
%\end{center}
\quad\!\!
\textnormal{\small{reduces to}}
%\noindent {\normalsize is transformed into}
\!\!\!\!\!\!\!\!
%
%\begin{center}
\begin{prooftree}[label separation =0.2em, separation = 0.3em]
	\Hypo{\ulcorner \!\lnot (A \lor B) \urcorner^1}
	\Hypo{}
	\Ellipsis{$\pi_1$}{A \lor B}
	\Infer2[\scriptsize$\lnot_\Elim$]{\; \bot \;}
	\Ellipsis{$\pi_2$}{\;\bot\;}
	 \Infer1[\scriptsize$\Raa^1$]{A \lor B}
	  \Hypo{\ulcorner \lnot C \urcorner^3}
	    \Hypo{\ulcorner A \urcorner^2}
	  \Ellipsis{$\pi'$}{C}
	\Infer[separation = -0.1em]2[\scriptsize$\lnot_\Elim$\qquad ]{\,\bot\,}
	  \Hypo{\ulcorner \lnot C \urcorner^3}
	   \Hypo{\ulcorner B \urcorner^2}
	  \Ellipsis{$\pi''$}{C}
	\Infer[separation = -0.1em]2[\scriptsize$\lnot_\Elim$]{\;\bot\;}
	\Infer[separation = -1.3em]3[\scriptsize$\lor_{\Elim}^{2}$]{\;\bot\;}
	\Infer1[\scriptsize$\Raa^3$]{C}
	\Ellipsis{$\pi_0$}{}
\end{prooftree}
\end{equation*}
}

% \noindent where $C$ is different from $\bot$. 

Second, the original reduction procedure is applied, following the complexity of the maximum formulas of the detours (namely, starting with $C$). 
This is possible since the reduction of a $\lor$ classical detour is now operated only when the conclusion of the $\lor_\Elim$ is $\bot$, so no new %(possibly) 
more complex detours are created.\footnote{The reason is that the burden of the creation of new (possibly) more complex detours rests now on the preliminary step corresponding to the ``expansion'' of the $\lor_\Elim$ rule described above.} When it comes the turn of the $\vee$ classical detour, we have that

{\footnotesize
\begin{equation}
\label{stalsol}
\begin{prooftree}[label separation = 0.2em, separation = 0.7em]
	\Hypo{\ulcorner \lnot (A \lor B) \urcorner^1}
	\Hypo{}
	\Ellipsis{$\pi_1$}{A \lor B}
	\Infer2[\scriptsize$\lnot_\Elim$]{\; \bot \;}
	\Ellipsis{$\pi_2$}{\;\bot\;}
	 \Infer1[\scriptsize$\Raa^1$]{A \lor B}
	 \Hypo{\ulcorner A \urcorner^2}
	 \Ellipsis{$\pi'$}{\bot}
	 \Hypo{\,\ulcorner B \urcorner^2}
	 \Ellipsis{$\pi''$}{\bot}
	 \Infer[separation = 0.6em]3[\scriptsize$\lor_{\ElimOne}^{2}$]{\bot}
	 \Ellipsis{$\pi_0$}{}
\end{prooftree} 
\ 
\begin{textnormal}{\small reduces to\!\!\!\!\!\!}\end{textnormal}
\begin{prooftree}[label separation = 0.2em, separation = 0.7em]
	\Hypo{}
	\Ellipsis{$\pi_1$}{A \lor B}
	 \Hypo{\ulcorner A \urcorner^1}
	 \Ellipsis{$\pi'$}{\;\bot\;}
	 \Hypo{\ulcorner B \urcorner^1}
	 \Ellipsis{$\pi''$}{\;\bot\;}
	 \Infer3[\scriptsize$\lor_{\ElimOne}^{1}$]{\;\bot\;}
	 \Ellipsis{$\pi_2$}{\;\bot\;}
	 \Infer1[\scriptsize$\Efq$]{A \lor B}
	 \Hypo{\ulcorner  A \urcorner^2}
	 \Ellipsis{$\pi'$}{\bot}
	 \Hypo{\,\ulcorner B \urcorner^2}
	 \Ellipsis{$\pi''$}{\bot}
	 \Infer3[\scriptsize$\land_{\ElimOne}^{2}$]{\;\bot\;}
	 \Ellipsis{$\pi_0$}{}
\end{prooftree}
\end{equation}
}

Note that St{\aa}lmarck's first step for the reduction of the $\vee$ case can also be described as the application of a \textit{permutative conversion} between the $\lor_\Elim$ and the $\neg_\Elim$ rules in the reduced derivation of (\ref{stalredisj2}), as remarked by de Groote (\cite[p.~184]{deGroote01}).\footnote{See also \cite[p.\,136]{Stalmarck91}. The %characterization
definition of permutative conversions (or reductions) is given in \cite[p.\,51]{Prawitz65} and \cite[p.\,253]{Prawitz71}.} It is for this reason that we can say that St{\aa}lmarck works with the standard set of inference rules for classical logic. More precisely, unlike Statman\,---\,who works with a system of classical logic where the rule $\lor_\Elim$ is replaced by the rule $\lor'_\Elim$\,---\,St{\aa}lmarck keeps defining the system of classical rules using the standard $\lor_\Elim$. In particular, since the permutative conversions are taken into account when a $\lor_\Elim$ is involved in a classical detours, this rule is used in a restricted way only with respect to the reduction procedure for classical detours, but not with respect to the simple construction of a derivation. What is lost, then, is the uniformity of the reduction procedure for classical detours, even if there is no loss from the point of view of derivability.

\subsubsection{\label{vonPlatoapproach}von Plato and Siders' approach (2012)}

A uniform solution to the weak normalization problem of classical natural deduction for the full first-order language\,---\,\Ie~a solution imposing no sort of restriction to the application of the inference rules of the full first-order classical natural deduction\,---\,was finally given by von Plato and Siders \cite{vonPlatoSiders12}.%
\footnote{In \cite{vonPlatoSiders12}, von Plato and Siders %seems to 
propose only a weak normalization theory for classical logic. A strong normalization strategy is suggested in \cite{vonPlato16}.}

They consider the same notion of classical detour \textit{\`a la} Prawitz as St{\aa}lmarck, but they use a system of natural deduction for classical logic where all the elimination rules are presented in a general form as done in \cite{vonPlato01}. 
% For example, the conjunction elimination rule becomes
For example, the elimination rules for conjunction and implication become respectively
% \begin{center}
% \begin{prooftree}
% 	\Hypo{A \wedge B}
% 	\Hypo{\ulcorner A,B \urcorner^{1}}
% 	\Ellipsis{$\pi$}{C}
% 	\Infer2[\footnotesize$\wedge_{\Elim}^{1}$]{C}
% \end{prooftree}
% \end{center}
% 
% \noindent and the implication elimination becomes
% 
% \begin{center}
% \begin{prooftree}
% 	\Hypo{A \to B}
% 	\Hypo{A}
% 	\Hypo{\ulcorner B \urcorner^{1}}
% 	\Ellipsis{$\pi$}{C}
% 	\Infer3[\footnotesize$\to_{\Elim}^{1}$]{C}
% \end{prooftree}
% \end{center}
% \label{genelim}
\begin{center}
\begin{prooftree}
	\Hypo{A \wedge B}
	\Hypo{\ulcorner A,B \urcorner^{1}}
	\Ellipsis{$\pi$}{C}
	\Infer2[\footnotesize$\wedge_{\Elim}^{1}$]{C}
\end{prooftree}
\qquad\textup{ and }\qquad
\begin{prooftree}
	\Hypo{A \to B}
	\Hypo{A}
	\Hypo{\ulcorner B \urcorner^{1}}
	\Ellipsis{$\pi$}{C}
	\Infer3[\footnotesize$\to_{\Elim}^{1}$]{C}
\end{prooftree}
\end{center}
\label{genelim}

Using these general elimination rules, von Plato and Siders are able to define the postponement of $\Raa$ (for classical detours \textit{\`a la} Prawitz) in an uniform way, %precisely
 following the general reduction scheme (\ref{statmanred}).

%Being $\circ$ the principal connective of $A$, the derivation  
%
%{\footnotesize
%\begin{equation}
%\label{vonplatored}
%\begin{prooftree}
%	\Hypo{\ulcorner \lnot A \urcorner^1}
%	\Ellipsis{$\pi$}{\; \bot \;}
%	\Infer1[\scriptsize$\Raa^1$]{A}
%	 \Hypo{}
%	 \Ellipsis{$\pi'$}{ }
%	\Infer2[\scriptsize$\circ_\Elim$]{C}
%	\Ellipsis{$\pi_0$}{}
%\end{prooftree}
%\qquad
%\begin{textnormal}{\normalsize reduces to}\end{textnormal}
%\qquad
%\begin{prooftree}
%	  \Hypo{\ulcorner \lnot C \urcorner^2}
%	   \Hypo{\ulcorner A \urcorner^1}
%	   \Hypo{}
%	   \Ellipsis{$\pi'$}{}
%	  \Infer2[\scriptsize$\circ_\Elim$]{C}
%	  \Infer2[\scriptsize$\lnot_\Elim$]{\bot}
%	  \Infer1[\scriptsize$\lnot_\Intro^1$]{\lnot A}
%	  \Ellipsis{$\pi$}{\; \bot \;}
%	  \Infer1[\scriptsize$\Raa^2$]{C}
%	\Ellipsis{$\pi_0$}{}
%\end{prooftree}
%\end{equation}
%}

In this way, unlike St{\aa}lmarck's reductions, no use of $\Efq$ is needed. Moreover, Stålmarck's treatment of $\vee_\Elim$ through a permutative conversion with a $\neg_\Elim$ can be extended to all the other elimination rules, thanks to their general form. 
Besides, unlike St{\aa}lmarck, the appeal to permutative conversions does not represent a necessary step, since there is no need for reasoning over the complexity of the maximum formulas of classical detours. 
The reason is that, by using general elimination rules, a derivation in classical logic can be considered to be normal when all major premisses of the elimination rules are assumptions. In order to obtain this normal form, it is sufficient to apply, first, the reductions associated to the classical detours -- as described in (\ref{statmanred}) -- and then the reductions associated to the intuitionistic detours. In this way Prawitz's original strategy is respected, even if the reductions of classical detours are not necessarily associated with the atomization of the applications of $\Raa$ rules. 
Thus, as in Prawitz (see \cite[p.~41]{Prawitz65} and \cite[part. II.3.2]{Prawitz71}), a classical proof in normal form becomes a derivation essentially composed of two parts:%
\footnote{Note that, by restricting to the fragment $\{\neg, \wedge, \to, \bot, \forall\}$, Prawitz's notion of classical normal derivation can be defined with respect to the notion of branch (see \cite[p.~52]{Prawitz65}). 
On the other hand, working with the full language and using general elimination rules obliges Siders and von Plato to define the notion of classical normal derivation with respect to another notion, that of thread (see \cite[p.~208]{vonPlatoSiders12}). 
% However, their notion of thread is different from Prawitz's notion of thread, 
The reader has to pay attention that von Plato and Siders' definition of thread is different from Prawitz's one (already mentioned here at p.~\pageref{threadef}). More precisely, von Plato and Siders' notion of thread is conceived as a sort of generalization of Prawitz's notion of path, allowing one to go through a derivation by jumping from the major premiss of a general elimination rule to the assumptions of the minor premisses of the same rule. For more detail see Negri and von Plato (\cite[p.~196 et sqq.]{NegrivonPlato01} and \cite[p.~26--27]{NegrivonPlato11}).}

\begin{quote}
\begin{small}
[\ldots] from the endformula upward, there will be a sequence of $I$-rules and their nested premisses [\ldots] until a conclusion of a rule \textit{DN} [\textit{i.e.}~$\Raa$] is reached. Its premiss is $\bot$. Looking from the other direction, from top formulas downward, we find a nested sequence of major premisses of $E$-rules. [\ldots] the presence of rule \textit{DN} can force conclusions of $E$-rules in normal derivations to be equal to the premiss $\bot$, without any \textit{a priori} requirement that this should be so. (\cite[p.~208]{vonPlatoSiders12})
\end{small}
\end{quote}

%This means, in particular, that
Therefore, in a normal derivation the application of the $\Raa$ rule is what separates one of the parts containing the elimination rules from the part containing the introduction rules. Schematically, this takes the form:

\begin{center}
  \begin{prooftree}
    \Hypo{\ulcorner \lnot A_1 \urcorner^1}
    \Hypo{}
    \Ellipsis{$\pi_1$}{A_1}
    \Infer2[\footnotesize$\lnot_\Elim$]{\bot}
    \Ellipsis{\footnotesize $E$-rules}{\,\bot\,}
    \Infer1[\footnotesize$\Raa^1$]{A_1}
    \Infer[no rule,rule margin=0pt]1{\qquad\ddots}
    \Hypo{\ulcorner \lnot A_n \urcorner^n}
    \Hypo{}
    \Ellipsis{$\pi_n$}{A_n}
    \Infer2[\footnotesize$\lnot_\Elim$]{\bot}
    \Ellipsis{\footnotesize $E$-rules}{\,\bot\,}
    \Infer1[\footnotesize$\Raa^n$]{A_n}
    \Infer[no rule,rule margin=0pt]1{\iddots\qquad}
    \Infer[no rule,rule margin=-5pt]2{\text{\footnotesize $I$-rules}}
    \Ellipsis{}{C}
  \end{prooftree}
\end{center}

This makes clear that in order to obtain the normal form for classical natural deduction (with respect to classical detours \textit{\`a la} Prawitz), it is sufficient to push $\Raa$ downwards with respect to the elimination rules whose major premiss is the conclusion of $\Raa$, while nothing is said about the possibility of pushing $\Raa$ downwards with respect to introduction rules, or to elimination rules whose major premiss is not a conclusion of $\Raa$. 
Nevertheless, in \cite[p.~210]{vonPlatoSiders12}, von Plato and Siders also mention the possibility of defining the reduction steps for pushing $\Raa$ downwards with respect to the introduction rules of the propositional fragment. These reduction steps are then explicitly given by von Plato in \cite[pp.~86--87]{vonPlato13}.

\subsection{Seldin's normalization strategy (1986)}
\label{subsect:seldinstrategy}

Turning now to Seldin's approach \cite{Seldin86} for the weak normalization of first-order classical natural deduction, we can see that the principal difference to Prawitz rests on the notion of classical detour which is adopted. Seldin considers that a classical detour consists in a $\Raa$ rule introducing a formula occurrence $A$, which is immediately followed by another rule having $A$ as one of its premisses (see \cite[p.~638]{Seldin86}).

This characterization is more general than Prawitz's: in order to eliminate a classical detour, $\Raa$ must be pushed downwards not only with respect to the major premiss of the elimination rules, but also with respect to
%  all the other possible configurations, in particular those created by the introduction rules. 
any (introduction or elimination) inference rule immediately below $\Raa$.

It would be tempting, for such a purpose, to appeal to the reduction scheme (\ref{statmanred}) and to generalize it in order to define a reduction procedure for Seldin's notion of classical detour. The idea would be to replace the elimination rule $\circ_\Elim$ in \eqref{statmanred} by some rule $\mathsf{s}$. 
More precisely, given a derivation with an instance $\mathsf{r}$ of $\Raa$ which is not its last rule, a reduction step $\leadsto$ either pushes $\mathsf{r}$ downwards or erases $\mathsf{r}$, \Ie~if $\mathsf{s}$ is an instance of a ($1$-, $2$- or $3$-ary) rule immediately below $\mathsf{r}$, one has (for $C \neq \bot$):
{\small
\begin{align*}
%   \Pi = 
  \begin{prooftree}[label separation = 0.3em]
    \Hypo{\ulcorner \lnot A \urcorner^1}
    \Ellipsis{$\pi$}{\; \bot \;}
    \Infer1[\scriptsize$\Raa^1$]{A}
      \Hypo{}
      \Ellipsis{$\pi'$}{ }
    \Infer2[\scriptsize$\mathsf{s}$]{C}
    \Ellipsis{$\pi_0$}{}
  \end{prooftree}
  \quad\
  &\leadsto
  \begin{prooftree}[label separation = 0.3em]
      \Hypo{\ulcorner \lnot C \urcorner^2}
	\Hypo{\ulcorner A \urcorner^1}
	\Hypo{}
	\Ellipsis{$\pi'$}{}
      \Infer2[\scriptsize$\mathsf{s}$]{C}
      \Infer2[\scriptsize$\lnot_\Elim$]{\bot}
      \Infer1[\scriptsize$\lnot_\Intro^1$]{\lnot A}
      \Ellipsis{$\pi$}{\; \bot \;}
      \Infer1[\scriptsize$\Raa^2$]{C}
    \Ellipsis{$\pi_0$}{}
  \end{prooftree}
%   = \Pi'
  &\textup{ and }&&
%   \Pi = 
  \begin{prooftree}
    \Hypo{\ulcorner \lnot A \urcorner^1}
    \Ellipsis{$\pi$}{\; \bot \;}
    \Infer1[\scriptsize$\Raa^1$]{A}
      \Hypo{}
      \Ellipsis{$\pi'$}{ }
    \Infer2[\scriptsize$\mathsf{s}$]{\bot}
    \Ellipsis{$\pi_0$}{}
  \end{prooftree}
  \quad\
  &\leadsto
  \begin{prooftree}
      \Hypo{\ulcorner A \urcorner^1}
	\Hypo{}
	\Ellipsis{$\pi'$}{}
      \Infer2[\scriptsize$\mathsf{s}$]{\bot}
      \Infer1[\scriptsize$\lnot_\Intro^1$]{\lnot A}
      \Ellipsis{$\pi$}{\bot}
    \Ellipsis{$\pi_0$}{}
  \end{prooftree}
%   = \Pi'
\end{align*}
}

However, as already noticed by Seldin \cite[pp.~642, 645]{Seldin86}, these schemata work only if no assumption of $\pi$ is discharged at $\mathsf{s}$ in the derivations on the left-hand side of $\leadsto$; otherwise the transformation $\leadsto$ would change the set of non-discharged assumptions, adding new formulas in it (think for example of the case in which $\mathsf{s}$ is a $\to_\Intro$ rule, or a $\exists_\Elim$ rule with $A$ as its minor premiss).

%Seldin's solution consists in adopting the two following alternative schemata (where $C \neq \bot$):
Seldin's solution consists in adopting two alternative schemata ($C \neq \bot$):

\begin{small}
\begin{align}\label{eq:seldin}
      %   \Pi = 
	\begin{prooftree}
	  \Hypo{\ulcorner \lnot A \urcorner^1}
	  \Ellipsis{$\pi$}{\; \bot \;}
	  \Infer1[\footnotesize$\Raa^1$]{A}
	    \Hypo{}
	    \Ellipsis{$\pi'$}{ }
	  \Infer2[\footnotesize$\mathsf{s}$]{C}
	  \Ellipsis{$\pi_0$}{}
	\end{prooftree}
	\qquad
	&\leadsto
	\qquad
	\begin{prooftree}
	  \Hypo{\ulcorner \lnot C \urcorner^2}
	    \Hypo{\ulcorner \lnot C \urcorner^2}
	      \Hypo{\ulcorner A \urcorner^1}
	      \Hypo{}
	      \Ellipsis{$\pi'$}{}
	    \Infer2[\footnotesize$\mathsf{s}$]{C}
	    \Infer2[\footnotesize$\lnot_\Elim$]{\bot}
	    \Infer1[\footnotesize$\lnot_\Intro^1$]{\lnot A}
	    \Ellipsis{$\pi$}{\; \bot \;}
	    \Infer1[\footnotesize$\Efq$]{A}
	      \Hypo{}
	      \Ellipsis{$\pi'$}{}
	    \Infer2[\footnotesize$\mathsf{s}$]{C}
	    \Infer2[\footnotesize$\lnot_\Elim$]{\; \bot \;}
	    \Infer1[\footnotesize$\Raa^2$]{C}
	  \Ellipsis{$\pi_0$}{}
	\end{prooftree}
      %   = \Pi'
 \end{align}

\begin{align}\label{eq:seldinbis}
%\tag{$5$}
%       \textup{\small and }
      %   \Pi = 
	\begin{prooftree}
	  \Hypo{\ulcorner \lnot A \urcorner^1}
	  \Ellipsis{$\pi$}{\; \bot \;}
	  \Infer1[\footnotesize$\Raa^1$]{A}
	    \Hypo{}
	    \Ellipsis{$\pi'$}{ }
	  \Infer2[\footnotesize$\mathsf{s}$]{\bot}
	  \Ellipsis{$\pi_0$}{}
	\end{prooftree}
	\qquad
	&\leadsto
	\quad
	\begin{prooftree}
	    \Hypo{\ulcorner A \urcorner^1}
	      \Hypo{}
	      \Ellipsis{$\pi'$}{}
	    \Infer2[\footnotesize$\mathsf{s}$]{\bot}
	    \Infer1[\footnotesize$\lnot_\Intro^1$]{\lnot A}
	    \Ellipsis{$\pi$}{\; \bot \;}
	    \Infer1[\footnotesize$\Efq$]{A}
	      \Hypo{}
	      \Ellipsis{$\pi'$}{}
	    \Infer2[\footnotesize$\mathsf{s}$]{\bot}
	  \Ellipsis{$\pi_0$}{}
	\end{prooftree}
      %   = \Pi'
\end{align}
\end{small}

Note that these reductions make use of a $\Efq$ rule. We have already seen the same occurring in St{\aa}lmarck's reductions for the $\{\neg, \wedge, \to, \bot, \forall\}$ fragment. In fact, St{\aa}lmarck's reductions are just a particular case of Seldin's ones, as they only work when $\mathsf{s}$ is an elimination rule with $A$ as major premiss. 

It should also be noted that, in Seldin's case, no restrictions have to be made on the applications of $\lor_\Elim$ and $\exists_\Elim$ rules during the process of normalization. The reason is that his normalization strategy is not like St{\aa}lmarck's one, but follows the same pattern as Prawitz. 
First, all classical detours are eliminated, and secondly, all intuitionistic detours are eliminated. The difference to Prawitz is the way in which classical detours are eliminated. In Seldin, the elimination of classical detour consists in pushing down all instances of $\Raa$ with respect to all the other rules, and then contract these instances of $\Raa$ into one. In this way, what tells him when the elimination of classical detours has to stop is the position of $\Raa$ in the derivation tree, and not, like in Prawitz, the complexity of the formula to which $\Raa$ is applied. 
%In this sense, 
By borrowing a terminology from Girard's jargon, we could say that for Seldin the termination of classical detours elimination can be characterized in a \textit{geometrical} way rather than in a \textit{syntactical} one.\footnote{Note that also von Plato and Siders' normalization strategy can be characterized in geometric rather than in syntactic terms: in order to establish when a proof is in normal form it is sufficient to look at the position of the major premisses of the elimination rules -- namely, the fact of being in the position of assumptions -- and not at their syntactical form.}
%First, all the applications of $\Raa$ are pushed down with respect to the other rules, and contracted into one; secondly, a normalization strategy for intuitionistic logic is applied to the subderivation preceding the last and unique application of $\Raa$.

However, even Seldin's strategy is not immune from some restrictions in order to work. 
The $\forall$ has to be dropped (\Ie~Seldin's approach works for the $\{\neg, \land, \lor, \to, \bot, \exists\}$ fragment, which is as expressive as the full language of first-order classical logic), since the reduction schemata \eqref{eq:seldin}-\eqref{eq:seldinbis} cannot be applied when $\mathsf{s}$ is a $\forall_\Intro$ rule.
  Indeed, the natural way to treat the $\forall_\Intro$ case would be the following reduction step:
  
  {\small
  \begin{align*}
    \Pi = 
    \begin{prooftree}
      \Hypo{\ulcorner \lnot A \urcorner^1}
      \Ellipsis{$\pi'$}{\; \bot \;}
      \Infer1[\small$\Raa^1$]{A}
      \Infer1[\small$\forall_\Intro$]{\forall x A}
      \Ellipsis{$\pi$}{}
    \end{prooftree}
    \qquad
    &\leadsto
    \qquad
    \begin{prooftree}
      \Hypo{\ulcorner \lnot \forall x A \urcorner^2}
	\Hypo{\ulcorner A \urcorner^1}
      \Infer1[\small$\forall_\Intro$]{\forall x A}
      \Infer2[\small$\lnot_\Elim$]{\bot}
      \Infer1[\small$\lnot_i^1$]{\lnot A}
      \Ellipsis{$\pi'$}{\; \bot \;}
      \Infer1[\small$\Raa^2$]{\forall x A}
      \Ellipsis{$\pi$}{}
%       \Alter{\color{blue}}
    \end{prooftree}
    = \Pi'
  \end{align*}
  }%
  but $\Pi'$ is not a derivation in classical natural deduction (nor in its subsystems) because in $\Pi'$ the rule $\forall_\Intro$ is not correctly instantiated: indeed the variable $x$ may occur free in $A$ and $A$ is a non-discharged assumption when the rule $\forall_\Intro$ is applied in $\Pi'$.
%   We think that there is no way to treat the $\forall_\Intro$ case.
  There is no (reasonable) way to treat the $\forall_\Intro$ case without adding any rule of some intermediate logic such as $\mathsf{MH}$, see \cite[in particular pp.~639-640]{Seldin86}.
 
Nevertheless, as already anticipated in the introduction, the problem concerning $\forall$ is a small limitation with respect to the great advantage of Seldin's strategy, consisting in obtaining Glivenko's theorem for intuitionistic logic as an immediate corollary (see \cite[pp.~637-638]{Seldin86})%
\footnote{Note that the same result can be obtained using the reduction rules proposed by von Plato, that we mentioned at the end of \S\ref{vonPlatoapproach} (see \cite[pp. 87--88; pp. 142--143]{vonPlato13}).}%
: in a derivation where $\Raa$ is postponed (possibly it contains several instances of $\Efq$ that are not its last rule), it is sufficient to replace its last rule -- a $\Raa$ rule -- by a $\lnot$-introduction rule discharging the same assumptions.

\subsection{Towards a unified approach}
In this article we will focus on the postponement of $\Raa$: we aim to show that Seldin's result about the postponement of $\Raa$ can be generalized in such a way that a Glivenko's theorem can be obtained not only for intuitionistic but also for minimal logic. In order to do this, \mbox{we will proceed in two steps.}

First, we will show that, with respect to Seldin's definition of classical detour, the use of the $\Efq$ rule in the reduction steps can be limited to just the case where $\mathsf{s}$ corresponds to the $\to_\Intro$ rule. However, the reduction steps that we define for classical detours will not come out from a uniform reduction scheme -- as in Seldin's original formulation -- but they will come out from a mixing of techniques. 
More precisely, we can divide the definition of our reduction steps according to two main cases (see \S\ref{sect:rewrite}):

\begin{enumerate}
\item when the maximum formula $A$ is obtained from a $\Raa$ followed by an elimination rule $\mathsf{s}$,

\begin{enumerate}
\item if $A$ is the major premiss of $\mathsf{s}$, we will follow the general scheme (\ref{statmanred}); 
\item if $A$ is one of the minor premisses of $\mathsf{s}$, we will introduce new specific reduction steps.
\end{enumerate}

\item when the maximum formula $A$ is obtained from a $\Raa$ followed by an introduction rule, we will follow (with some emendations for the case of the implication) the reduction steps proposed by von Plato (\cite[p.~85--86]{vonPlato13}; cf. also the end of \S\ref{vonPlatoapproach}, \textit{supra}). 
\end{enumerate}

%\begin{itemize}
%\item[i)] when the maximum formula is obtained from a $\Raa$ followed by an elimination rule, we will adopt Prawitz's original reductions, and generalize them in the case of $\lor_\Elim$ and $\exists_\Elim$ along the lines of the straightforward reductions examined, but immediately rejected, by St{\aa}lmarck (see p.~\pageref{stalred}, \textit{supra}); 
%\item[ii)] when a maximum formula is obtained from a $\Raa$ followed by an introduction rule, we will follow (with some emendations) the reductions proposed by von Plato (\cite[p.~85--86]{vonPlato13}; cf. also the end of \S~\ref{vonPlatoapproach}, \textit{supra}). 
%\end{itemize}
As it will become clear in \S\ref{sect:postponement} (see also Definition~\ref{def:distance}), our main concern is the ``geometrical'' character of the postponement of $\Raa$, while normalization of classical logic is only an indirect target. In this sense, we will see that it is not necessary to take into consideration the complexity of the formulas introduced by the $\Raa$ rule. This will lead us to consider also other kinds of reduction steps involving instances of rules in which more than one of their premisses is obtained from $\Raa$. Multiple occurrences of $\Raa$ then need to be considered -- and reduced -- at the same time, even if they create maximum formulas of greater complexity. Nevertheless, even by working with this new kind of reduction step, we can eventually show that a (weak) normalization theorem for classical logic can be recovered.
%{\color{red}We will show, in particular, that the order of application of these reduction rules for classical detours is not essential, in the sense that any order of application allows one to push $\Raa$ downward with respect to all the other rules.}

Secondly, we will show how the reductions that we propose can shed light on a problem raised by Pereira \cite{Pereira00} and determine the exact relations between normalization strategies for classical logic and negative translations. It is not difficult to see that Prawitz's original normalization strategy \cite[pp.39-40]{Prawitz65} for the fragment $\{\neg, \wedge, \to, \bot, \forall\}$ of classical natural deduction induces a negative translation: it is sufficient to replace every atomic instance of $\Raa$ present in a normal classical proof with a $\neg_\Intro$ in order to obtain a variant $(\cdot)^\mathtt{g}$ of Gentzen's translation, such that 
\begin{align*}
  (P(t_1, \ldots, t_n))^{\texttt{g}} &= \neg \neg P(t_1, \ldots, t_n) &
  (\bot)^{\texttt{g}} &= \bot \\
  (A \wedge B)^{\texttt{g}} &= A^{\texttt{g}} \wedge B^{\texttt{g}} &
  (A \to B)^{\texttt{g}} &= A \to B^{\texttt{g}} \\
  (\lnot A)^{\texttt{g}} &= \lnot A^{\texttt{g}} &
  (\forall{x}A)^{\texttt{g}} &= \forall{x}A^{\texttt{g}}
\end{align*}

\noindent and where $\vee$ and $\exists$ are already translated, since they are defined via a combinations of the primitive connectives, \Ie~$A \vee B \Defeq \neg (\neg A \wedge \neg B)$ and $\exists{x}A \Defeq \neg \forall{x} \neg A$. 
Note also that since $A^{\texttt{g}}$ is minimally equivalent to $\neg \neg A^{\texttt{g}}$, then $A \to B^{\texttt{g}}$ is minimally equivalent to $A \to \neg \neg B^{\texttt{g}}$; but $A \to \neg \neg B^{\texttt{g}}$ is also minimally equivalent to $\neg (A \wedge B^{\texttt{g}})$. 
%This means that 
Hence, just by using minimal logic steps this variant of Gentzen's negative translation can be transformed into a variant of G\"odel's negative translation (cf. \cite[p.~22]{Pereira00}).\footnote{Even when the full language fragment is considered, like in Stålmarck's \cite{Stalmarck91} or in von Plato and Siders' \cite{vonPlatoSiders12} approaches, it is possible to detect the use of some kind of negative translations. See Appendix \ref{sect:hidnegativetrans} for more details.}

In a similar way, we will show that Seldin's normalization strategy induces another kind of negative translation, namely (a variant of) Kuroda's one. 
%\
%induce a translation from classical logic to intuitionistic logic which is nothing but a logically equivalent variant of the Kuroda's translation. This allows us to shed light on a point raised by Pereira \cite{Pereira00} concerning the relations between normalization strategies for classical logic and negative translations: not only Prawitz's strategy induces a well known negative translation -- namely, the Gentzen's translation (see \cite[p.~22]{Pereira00}) -- but so does Seldin's strategy. 
However, a crucial difference exists between the Gödel--Gentzen translation and Kuroda's original translation: the former %can be extended to minimal logic, while the latter can not. 
can embed classical logic into minimal logic, while the latter cannot (see \cite{FerreiraOliva12}).
The parallel between Prawitz's and Seldin's strategies would then work only partially. 
Actually, we will show that this is too harsh a conclusion. 
Indeed, by a slight modification of Kuroda's translation induced by our reduction steps, we obtain an embedding of full first-order classical logic %with $\Raa$ (where $\to$ and $\forall$ are defined using the other connectives) into the fragment $\{\neg, \wedge, \vee, \bot, \exists\}$ of minimal logic, % (which means that $\to$ and $\forall$ are defined in classical logic using the other connectives)
% simply by adding a double negation. 
into the fragment $\{\neg, \wedge, \vee, \bot, \exists\}$ of minimal logic.
In particular, the (adequate) fragment $\{\neg, \wedge, \vee, \bot, \exists\}$ of first-order classical logic (where $\to$ and $\forall$ are defined using the other connectives) is embedded into minimal logic via this variant of Kuroda's translation simply by adding a double negation in front of formulas: we get in this way a Glivenko theorem for minimal logic. 
%Moreover, by a slightly modification of this variant of Kuroda's translation, we can obtain an embedding of first-order classical logic with $\Raa$ (where $\to$ and $\forall$ are defined using the other connectives) into the fragment $\{\neg, \wedge, \vee, \bot, \exists\}$ of minimal logic, % (which means that $\to$ and $\forall$ are defined in classical logic using the other connectives) simply by adding a double negation. 

In fact, in \cite[pp.~203,216]{Seldin89}, Seldin already proved a form of Glivenko's theorem consisting of embedding the system $\mathit{TD}^*$ into minimal logic. But $\mathit{TD}^*$ is a weaker system than first-order classical logic, since it corresponds to first-order minimal logic plus the rule of \textit{consequentia mirabilis}

\begin{center}
\begin{prooftree}
	\Hypo{\ulcorner \neg A \urcorner^1}
	 \Ellipsis{$\pi$}{A}
	 \Infer1[\footnotesize$\mathsf{cm}^1$]{\;A\;}
\end{prooftree}
\end{center}

\noindent and in this system neither $\Raa$ nor $\Efq$ are derivable (see \cite{AriolaHerbelinSabry05} for details). Our result is thus more general. 

It is worth noting that, unlike the algebraic demonstration given in \cite{ErtolaSagastume08}, our demonstration makes use of purely proof-theoretic tools and is not restricted to the propositional fragment. But this does not mean that our result is the only proof-theoretic demonstration of Glivenko's theorem for first-order minimal logic. A proof-theoretic demonstration of this theorem is also given by Tennant \cite{Tennant87}. However, unlike our approach, he does not appeal to the postponement of $\Raa$ for classical logic, but he translates each classical inference rule into a corresponding derivable rule in the fragment $\{\neg, \wedge, \vee, \bot, \exists\}$ of minimal logic. In this way, by induction on the length of a derivation, he can then transform a classical derivation into a derivation in minimal logic (for more details see Appendix \ref{sect:tennant}). 

%2

%\documentclass[main.tex]{subfiles}
% 
%\begin{document}
\section{The syntax of first-order natural deduction}
\label{sect:preliminairies}

Let us first recall (quite informally) the language of first-order logic that we will use for our presentation. 

Formulas %and terms 
are generated by the propositional connectives $\top$ (\emph{truth}% or \emph{tautology}
), $\bot$ (\emph{falsehood}% or \emph{contradiction}
), $\lnot$ (\emph{negation}), 
$\land$ (\emph{conjunction}), $\lor$ (\emph{disjunction}), $\to$ (\emph{implication}), and the quantifiers $\forall$ (\emph{universal}) and $\exists$ (\emph{existential}), starting from an infinite set of individual variables (denoted by $x, y, z$, etc.) and, for any $n \in \Nat$, a set of $n$-ary function symbols (which for our purposes here we do not need to denote apart from the terms in which they occur) and a set of $n$-ary predicate symbols (denoted by $\Var, \Vartwo, \Varthree$, etc.).%
\footnote{In particular, for $n = 0$, we get a set of individual constants and a set of proposition symbols: therefore, we will consider propositional natural deduction as a subsystem of first-order natural deduction.}
Terms are denoted by $s,t$, etc.; formulas are denoted by $A, B, C$, etc., in particular atomic formulas different from $\bot, \top$ are denoted by $\Var(t_1, \dots, t_n)$ where $\Var$ is a $n$-ary predicate symbol ($n \in \Nat$). %of arity $n \in \Nat$.
Sets of formulas are denoted by $\Gamma, \Delta$, etc.
% Given a formula $B$ and a finite set of formulas $\Gamma = \{A_1, \dots, A_n\}$ (where $n \in \Nat$), we set $\Gamma \to B = (A_1 \to \dots (A_n \to B) \dots )$ (this is well-defined up to a permutation of $A_1, \dots, A_n$); in particular, if $\Gamma = \emptyset$ then $\Gamma \to B = B$.
% As usual \cite{Prawitz65,Seldin86,Seldin89}, \emph{negation} is not considered as primitive: $\lnot A$ is a shorthand for $A \to \bot$, for any formula $A$.
% An occurrence of $\to$ in a formula $A$ is \emph{$\lnot$-free} if it is not the principal sign of a subformula $\lnot B$ of $A$.

Formulas are identified up to renaming of bound variables. 
The capture-avoiding substitution of a term $t$ for all the free occurrences of an individual variable $x$ in a formula $A$ is denoted by $A\Sub{t}{x}$: 
it is implicitly assumed that none of the individual variables occurring in $t$ are bound in $A$ 
(this condition can always be fulfilled by renaming the bound variables of $A$).

% Note that %we consider 
% negation $\lnot$ is considered as %a 
% primitive% unary connective%, and not as a defined one
% . So, %In other words, we will not treat $\lnot A$ 
% $\lnot A$ will not be treated as %an abbreviation 
% a shorthand for $A \to \bot$, %and then 
% the inferences rules $\lnot_\Intro$ and $\lnot_\Elim$ (see Figure~\ref{fig:rules}) will be not special cases of $\to_\Intro$ and $\to_\Elim$, respectively:%
% \footnote{This is just a matter of convenience: all our results can be proved in a setting where $\lnot A = A \to \bot$ and the rules $\lnot_\Intro$ and $\lnot_\Elim$ are special cases of $\to_\Intro$ and $\to_\Elim$, but this requires to distinguish in all statements if $A \to \bot$ or $A \to B$ with $B \neq \bot$.}
% this is because, for our purposes (as we will see in Section~\ref{sect:rewrite}), the rules $\to_\Intro$ and $\lnot_\Intro$ have a different behavior.%

% Inference rules of $\Nk$ and its subsystems \dots
\ebproofset{label separation = 0.2em}
\begin{figure}[t]
%\begin{mdframed}
  \begin{center}
  {\footnotesize
%   \begin{prooftree}
% %     \Hypo{}
%     \Infer0[\footnotesize$\mathsf{hyp}$]{\; A \;}
%   \end{prooftree}
%   \qquad
  \begin{prooftree}%ex falso quodlibet
    \Hypo{}
    \Ellipsis{}{\; \bot \;}
    \Infer1[\scriptsize$\Efq$]{A}
  \end{prooftree}
  \quad\
  \begin{prooftree}%reductio ad absurdum
    \Hypo{\ulcorner \lnot A \urcorner^*}
    \Ellipsis{}{\; \bot \;}
    \Infer1[\scriptsize$\Raa^*$]{A}
  \end{prooftree}
  \quad\
  \begin{prooftree}%truth
%     \Hypo{}
    \Infer0[\scriptsize$\!\top_{\Intro}$]{\; \top \;}
  \end{prooftree}
  \quad\
  \begin{prooftree}%negation introduction
      \Hypo{\ulcorner A \urcorner^*}
      \Ellipsis{}{\; \bot \;}
      \Infer1[\scriptsize$\lnot_\Intro^*$]{\lnot A}
  \end{prooftree}
  \quad\
  \begin{prooftree}[separation=1.3em]%negation elimination
      \Hypo{}
      \Ellipsis{}{\lnot A}
      \Hypo{}
      \Ellipsis{}{A}
      \Infer2[\scriptsize$\lnot_\Elim$]{\bot}
  \end{prooftree}
  \quad\
  \begin{prooftree}
    \Hypo{\ulcorner A \urcorner^*}
    \Ellipsis{}{\; B \;}
    \Infer1[\scriptsize$\to_\Intro^*$]{A \to B}
  \end{prooftree}
  \quad\
  \begin{prooftree}[separation=1.3em]
    \Hypo{}
    \Ellipsis{}{A \to B}
    \Hypo{}
    \Ellipsis{}{A}
    \Infer2[\scriptsize$\to_\Elim$]{B}
  \end{prooftree}
%   \\

\medskip
%  \quad
 \begin{prooftree}[separation=1.3em]
    \Hypo{}
    \Ellipsis{}{A}
    \Hypo{}
    \Ellipsis{}{B}
    \Infer2[\scriptsize$\land_\Intro$]{A \land B}
  \end{prooftree}
  \quad\ 
  \begin{prooftree}
    \Hypo{}
    \Ellipsis{}{A \land B}
    \Infer1[\scriptsize$\land_{\ElimOne}$]{A}
  \end{prooftree}
%   \quad
\quad\ 
\begin{prooftree}[separation=1.3em]
  %\begin{prooftree}
    \Hypo{}
    \Ellipsis{}{A \land B}
    \Infer1[\scriptsize$\land_{\ElimTwo}$]{B}
  \end{prooftree}
%   \qquad
  \quad\ 
  \begin{prooftree}%disjunction introduction
    \Hypo{}
    \Ellipsis{}{A}
    \Infer1[\scriptsize$\lor_{\IntroOne}$]{A \lor B}
  \end{prooftree}
  \quad\ 
  \begin{prooftree}
    \Hypo{}
    \Ellipsis{}{B}
    \Infer1[\scriptsize$\lor_{\IntroTwo}$]{A \lor B}
  \end{prooftree}
  \quad\ 
  \begin{prooftree}[separation=0.8em]%disjunction elimination
    \Hypo{}
    \Ellipsis{}{A \lor B}
    \Hypo{\ulcorner A \urcorner^*}
    \Ellipsis{}{C}
    \Hypo{\ulcorner B \urcorner^*}
    \Ellipsis{}{C}
    \Infer3[\scriptsize$\lor_\Elim^*$]{C}
  \end{prooftree}
%   \\
  
  \medskip
  \begin{prooftree}
    \Hypo{}
    \Ellipsis{}{A}
    \Infer1[\scriptsize$\forall_{\Intro}$]{\forall x A}
  \end{prooftree}
  \qquad
  \begin{prooftree}
    \Hypo{}
    \Ellipsis{}{\forall x A}
    \Infer1[\scriptsize$\forall_{\Elim}$]{A \Sub{t}{x}}
  \end{prooftree}
  \qquad
  \begin{prooftree}
    \Hypo{}
    \Ellipsis{}{A \Sub{t}{x}}
    \Infer1[\scriptsize$\exists_{\Intro}$]{\exists x A}
  \end{prooftree}
  \qquad
  \begin{prooftree}[separation=0.8em]%existential elimination
    \Hypo{}
    \Ellipsis{}{\exists x A}
    \Hypo{\ulcorner A \urcorner^*}
    \Ellipsis{$\pi$}{C}
    \Infer2[\scriptsize$\exists_\Elim^*$]{C}
  \end{prooftree}
  }
  \end{center}
  
\medskip
{\footnotesize %The index $\Intro$ stands for introduction and the index $\Elim$ for elimination. 
%   Any instance of the rule 
% %   \begin{prooftree}
% %     \Hypo{}
% %     \Infer1[\small$\mathsf{hyp}$]{A}
% %   \end{prooftree}
% % \AxiomC{}
% % \UnaryInfC{A}
% % \DisplayProof
% %   $\mathsf{hyp}$ (for any formula $A$ as conclusion) is also simply denoted by $A$. 
%   $\overline{\,A\,}^\mathsf{\,hyp}$ (for any formula $A$) is also simply denoted by $A$.
%   The conclusion of any instance of the rule $\mathsf{hyp}$ in a derivation $\pi$ is an \emph{assumption} of $\pi$: it may be \emph{discharged} or \emph{non-discharged} (in $\pi$).
Every formula occurrence in a derivation $\pi$ that is not the conclusion of some rule instance in $\pi$ is an \emph{assumption of $\pi$}: it may be \emph{discharged} or \emph{undischarged} (in $\pi$).
A discharged assumption $A$ of a derivation $\pi$ is denoted by $\ulcorner A \urcorner$ with a marker -- here noted with a placeholder $*$ for a numeral -- for indicating the instance of rule in $\pi$ that has discharged it.
The rules that can discharge assumptions are $\Raa$, %$\lnot_\Intro$, 
 $\to_\Intro$, $\lor_\Elim$ and $\exists_\Elim$: any instance of these rules may discharge an arbitrary number of assumptions; possibly none. % that are occurrences of the same formula FALSE for \lor
In the rule $\forall_\Intro$, the variable $x$ must not be free in the undischarged assumptions. 
In the rule $\exists_\Elim$, the variable $x$ must not be free in $C$ or the non-discharged assumptions of $\pi$ different from $A$% of the derivation of $C$ from $A$
.}
  
  \caption{\label{fig:rules}Inference rules for first-order natural deduction ($\Intro$ stands for intro, $\Elim$ for elim).}
%\end{mdframed}
\end{figure}
%   \begin{prooftree}
%     \Hypo{}
%     \Infer1[\small$\mathsf{hyp}$]{\; A \;}
%   \end{prooftree}
\ebproofset{label separation = 0.5em}

A \emph{derivation system} in first-order natural deduction is the set of derivations that can be obtained from a given set of inference rules. In other words, a derivation system is identified with the set of its inference rules.
The complete list of inference rules that we will consider for any derivation system in first-order natural deduction is in Figure~\ref{fig:rules}.
Given two derivation systems $\mathsf{D}$ and $\mathsf{D}'$, $\mathsf{D}$ is a \emph{subsystem} of $\mathsf{D}'$ if $\mathsf{D} \subseteq \mathsf{D}'$; hence, any derivation in $\mathsf{D}$ is also a derivation in $\mathsf{D}'$.
The notions of derivation, conclusion and (major, minor) premisses of an instance of rule are taken for granted (see for example \cite{Prawitz65}).

% Differently from \cite{Prawitz65,Seldin86,Seldin89,Stalmarck91}, (discharged or non-discharged) assumptions of any derivation are considered as conclusions of instances of a $0$-ary rule (the rule $\mathsf{hyp}$, see Figure~\ref{fig:rules}). This way of treating assumptions is somehow counterintuitive (and maybe not even completely correct from a conceptual point of view; cf. \cite[p. 629--630]{Sundholm06}), however it has an instrumental value, as it allows us to simplify the definition of distance of a rule from the conclusion of a derivation (Definition~\ref{def:distance}). 

% The complete list of rules considered in this paper is given in Figure \ref{fig:rules}.  
Looking at the inference rules in Figure~\ref{fig:rules}, observe that $\Efq$ is nothing but the special case of $\Raa$ where no assumption is discharged. This means that, in a derivation $\pi$, every instance of the rule $\Efq$ is just an instance of the rule $\Raa$ discharging no assumption. We can thus say that, in a derivation $\pi$, an instance of $\Raa$ is \emph{discharging} if it is not an instance of $\Efq$ (\Ie~it discharges at least one assumption).

%The complete list of rules considered in this paper is given in Figure~\ref{fig:rules}. 

% Since $\lnot A = A \to \bot$, as special cases of %rules 
% $\to_\Intro$ and $\to_\Elim$ one has the rules $\lnot_\Intro$ and $\lnot_\Elim$, respectively:
% {\small
% \begin{align*}
%   \begin{prooftree}%negation introduction
%     \Hypo{\ulcorner A \urcorner^*}
%     \Ellipsis{}{\; \bot \;}
%     \Infer1[\footnotesize$\lnot_\Intro^*$]{\lnot A}
%   \end{prooftree}
%   &&
%   \begin{prooftree}%negation elimination
%     \Hypo{}
%     \Ellipsis{}{\lnot A}
%     \Hypo{}
%     \Ellipsis{}{A}
%     \Infer2[\footnotesize$\lnot_\Elim$]{\bot}
%   \end{prooftree}\,.
% \end{align*}
% }
% \noindent In a derivation $\pi$, an instance of $\to_\Intro$ (resp.~$\to_\Elim$) is \emph{$\lnot$-free} if it is not an instance of $\lnot_\Intro$ (resp.~$\lnot_\Elim$) i.e.~its conclusion (resp.~main premise) is not of the form $\lnot A$.
Note that %we consider 
negation $\lnot$ is here considered as %a 
primitive% unary connective%, and not as a defined one
:\footnote{This is also what is done by Andou \cite{Andou95} in his proof of (weak) normalization for classical logic (cf. \cite[p.~152]{Andou95}). Taking $\neg$ as primitive is essential for him in order to define what he calls a \textit{regular proof}, that is, a proof in which all the discharged assumptions of $\Raa$ are major premisses of a $\neg_\Elim$ \cite[p.~154]{Andou95}. We have not analyzed Andou's proposal in \S\ref{sect:overview} since we should have considered not only the detour reductions, but also other kinds of transformations necessary for putting non-normal proofs in the regular form.}
 %In other words, we will not treat $\lnot A$ 
$\lnot A$ will not be treated as %an abbreviation 
a shorthand for $A \to \bot$, %and then 
and the inferences rules $\lnot_\Intro$ and $\lnot_\Elim$ (see Figure~\ref{fig:rules}) will be not special cases of $\to_\Intro$ and $\to_\Elim$, respectively.
The reason is that, for our purposes (see in particular \S\ref{sect:Glivenko}), it %is easier to work with the rules $\to_\Intro$ and $\lnot_\Intro$ having a different behavior.
turns out that the rules $\to_\Intro$ and $\lnot_\Intro$ have different behavior.%
\footnote{%In fact, t
Taking $\lnot$ as primitive is just a matter of convenience: all our results can be proved in a setting where $\lnot A \Defeq A \to \bot$ and the rules $\lnot_\Intro$ and $\lnot_\Elim$ are special cases of $\to_\Intro$ and $\to_\Elim$. However, this requires us to distinguish, for all %implicative sentences
formulas $A \to B$, whether $B$ equals $\bot$.}

We say that the \emph{first-order minimal natural deduction} is the derivation system $\Nm = \{%\mathsf{hyp}, 
\top_\Intro, \lnot_\Intro, \lnot_\Elim, \land_\Intro, \allowbreak\land_{\ElimOne}, \allowbreak\land_{\ElimTwo}, \allowbreak \lor_{\IntroOne}, \allowbreak\lor_{\IntroTwo}, \lor_\Elim, \to_\Intro, \to_\Elim, \forall_\Intro, \forall_\Elim, \exists_\Intro, \exists_\Elim\}$ (\Ie~in $\Nm$ there are all the inference rules in Figure~\ref{fig:rules} except $\Raa$ and $\Efq$);
the \emph{first-order intuitionistic natural deduction} is the derivation system $\Nj = \Nm \cup \{\Efq\}$; and the \emph{first-order classical natural deduction} is the derivation system $\Nk = \Nm \cup \{\Raa\}$.
This means that $\Nm$ is a subsystem of $\Nj$, and that $\Nm$ and $\Nj$ are both subsystems of $\Nk$. 
%Actually, 
All the derivation systems we will consider are subsystems of $\Nk$.
% Definition of derivation in $\Nk$ and its subsystems \dots 

% For any (instance of a) rule $\mathsf{r}$ in $\Nk$, the premises 
% 
% \begin{note}
%   From now on, any instance 
%   $\begin{prooftree}
%     \Infer0[\footnotesize$\Hyp$]{A}
%   \end{prooftree}$
%   of the rule $\Hyp$ will be represented by $A$
% \end{note}
% 
% Given a natural deduction system $\mathsf{D}$, a \emph{derivation in $\mathsf{D}$} is a labelled rooted tree obtained by applying a finite number of times the rules of $\mathsf{D}$: edges are formulas, nodes are instances of the rules of $\mathsf{D}$ labelled by natural numbers if they are instances of $\lnot_\Intro$, $\to_\Intro$, $\lor_\Elim$, $\exists_\Elim$, $\Raa$ and possibly $\Hyp$. 
% If $\pi$ is a derivation in $\mathsf{D}$ then the \emph{assumptions} (resp.\ \emph{conclusion}) \emph{of $\pi$} are (resp.\ is) the leaves (resp.\ root) of $\pi$. 
% Assumptions may or may not be discharged.
% The \emph{hypotheses of $\pi$} are assumption that are not discharged. 

% A derivation of a formula $A$ from assumptions $A_1, \dots, A_n$ (for some $n \in \Nat$) has the intuitive meaning of: assuming $A_1, \dots, A_n$, we can deduce $A$.
% Viewed truth-functionally, this means: assuming that $A_1, \dots, A_n$ are true, then $A$ is true as well.

% Similarly to the sequent-style presentation of natural deduction, we will consider an assumption as a (derivation obtained by applying a) 0-ary inference rule, even if there is not an explicit notation for it.

\begin{note}
  Let $\mathsf{D} \subseteq \Nk$ be a derivation system.
  \begin{romanenum}
    \item Derivations in $\mathsf{D}$ are denoted by $\Pi, \Pi', \dots$, or also $\pi, \pi', \dots$%, possibly with subscripts
    .
    
%    \item Given a derivation $\pi$ in $\mathsf{D}$, we let $\RAA_{\pi}$ denote the set of instances of the rule $\Raa$ in $\pi$, and we let $\RAAp_{\pi}$ denote those instances that discharge assumptions.
    \item Given a derivation $\pi$ in $\mathsf{D}$, let $\RAA_{\pi}$ (resp.~$\RAAp_{\pi}$) denote the set of instances (resp.~discharging instances) of the rule $\Raa$ in $\pi$.
%     \item We denote by $\RAAp_{\pi}$ be the set of discharging instances of the rule $\Raa$ in $\pi$% which are not instances of $\Efq$ (i.e.~the set of instances of the rule $\Raa$ in $\pi$ discharging at least one assumption)
%     .

    \item\label{note:assumption} Given a formula $B$, a set of formulas $\Gamma$, and a derivation $\pi$ in $\mathsf{D}$, we write $\pi \colon \Gamma \vdash_\mathsf{D} B$ (or simply $\pi \colon \Gamma \vdash B$ when no ambiguity arises) to indicate that the conclusion of $\pi$ is $B$ and the undischarged assumptions of $\pi$ are occurrences of some formulas in $\Gamma$; possibly, not all formulas in $\Gamma$ occur as undischarged assumptions in $\pi$.
    
%     \item We write $\Gamma \vdash B$ if there is a derivation $\pi \colon \Gamma \vdash B$ in $\Nk$, and then we say that $\Gamma \vdash B$ is \emph{derivable in $\Nk$}.
    \item If there is a derivation $\pi \colon \Gamma \vdash_\mathsf{D} B$% in $\mathsf{D}$
    , we write $\Gamma \vdash_\mathsf{D} B$ and say that \emph{$B$ is derivable from $\Gamma$} (or \emph{$\Gamma \vdash B$ is derivable}) \emph{in $\mathsf{D}$}; \mbox{otherwise we write $\Gamma \not\vdash_\mathsf{D} B$.}
  \end{romanenum}
\end{note}

Clearly, $\RAAp_{\pi} \subseteq \RAA_{\pi}$ for every derivation $\pi$ in $\mathsf{D} \subseteq \Nk$.

Given an instance $\mathsf{r}$ of a rule in a derivation $\pi$, it is natural to define the notion of distance of  $\mathsf{r}$ in $\pi$ %(Definition~\ref{def:distance}) 
as the number of instances of rules in $\pi$ between $\mathsf{r}$ and the last rule of $\pi$. More formally, we have that:

\begin{defin}[Thread; distance of a rule; $\RAA$-size of a proof; standard derivation]\label{def:distance}
  Let $\pi$ be a derivation in $\mathsf{D} \subseteq \Nk$% whose last rule is $\mathsf{r}_0$
  .
  
  Given two formula occurrences $A$ and $B$ in $\pi$, a \emph{thread from $A$ to $B$ in $\pi$} is a sequence $\mathsf{t} = (A_i)_{0 \leq i \leq n}$ (with $n \in \Nat$) of formula occurrences in $\pi$ such that $A_0 = A$, $A_n = B$ and, for any $0 \leq i < n$ there is an instance of rule in $\pi$ having $A_i$ as a premise and $A_{i+1}$ as its conclusion;
  the \emph{length of $\mathsf{t}$} is $n$.%
  \footnote{Note that if $A = B$ (as formula occurrences in $\pi$) then the length of $\mathsf{t}$ is $0$.}
  
  For every instance $\mathsf{r}$ of a rule in $\pi$, the \emph{distance of $\mathsf{r}$} (\emph{from the conclusion of $\pi$}), denoted by $\Dist{\pi}(\mathsf{r})$, is the length of the thread from the conclusion of $\mathsf{r}$ to the conclusion of $\pi$.%
  \footnote{This notion is well-defined since a derivation is a tree (\Ie~a rooted acyclic connected graph) whose nodes are formula occurrences. Hence, for every formula occurrence $A$ in $\pi$, there exists exactly one thread from $A$ to the conclusion of $\pi$.}

  An $\mathsf{r} \in \RAA_{\pi}$ is \emph{$\RAA_{\pi}$-maximal} if $\Dist{\pi}(\mathsf{r}) \geq \Dist{\pi}(\mathsf{r}')$ for any $\mathsf{r}' \in \RAA_{\pi}$. 

  An $\mathsf{r} \in \RAAp_{\pi}$ is \emph{$\RAAp_{\pi}$-maximal} if $\Dist{\pi}(\mathsf{r}) \geq \Dist{\pi}(\mathsf{r}')$ for any $\mathsf{r}' \in \RAAp_{\pi}$. 

%   Let $\mathsf{RAA}_{\pi}$ be the set of instances of the rule $\Raa$ in $\pi$: t
  The \emph{$\RAA$-size of $\pi$} is $\Size_\RAA(\pi) = \sum_{\mathsf{r} \in \RAA_{\pi}} \Dist{\pi}(\mathsf{r})$.
  
  The \emph{$\RAAp$-size of $\pi$} is $\Size_{\RAAp}(\pi) = \sum_{\mathsf{r} \in \RAAp_{\pi}} \Dist{\pi}(\mathsf{r})$.
  
  We say that $\pi$ is \emph{\Mstandard} if in $\pi$ there is at most one instance of the rule $\Raa$, and this instance, if any, is the last rule of $\pi$, the rest of $\pi$ being a derivation in $\Nm$.
  
  We say that $\pi$ is \emph{\Jstandard} if in $\pi$ there is at most one discharging instance of the rule $\Raa$, and this instance, if any, is the last rule of $\pi$, the rest of $\pi$ being a derivation in $\Nj$.
\end{defin}

The notion of \Mstandard\ (resp.~\Jstandard) derivations characterizes exactly the derivations in $\Nk$ where the $\Raa$ (resp.~discharging $\Raa$) is postponed.
These notions will be used in Theorem~\ref{thm:postponement} and Corollary~\ref{cor:postponement}.
A \Jstandard\ derivation might contains several instances of $\Efq$, which are not its last rule.

Note that, for any derivation $\pi$ and any instance $\mathsf{r}$ of a rule in $\pi$, one has that $\Dist{\pi}(\mathsf{r}) \in \Nat$ and $\Size_{\RAA}(\pi) \geq \Size_{\RAAp}(\pi) \in \Nat$.
Moreover, $\Dist{\pi}(\mathsf{r}) = 0$ if and only if $\mathsf{r}$ is the last rule in $\pi$.

Since we are mainly interested in the postponement of $\Raa$ instead of normalization, differently from the approaches discussed in \S\ref{sect:overview}, our definitions of $\RAA_\pi$-maximal and $\RAAp_\pi$-maximal in a derivation $\pi$ depend only on the distances of the instances of $\Raa$ from the conclusion of $\pi$, without taking into account the complexity of the formulas occurring in the conclusions of these instances. 
In a way, as we will see in \S\ref{sect:postponement}, our approach to prove the postponement of $\Raa$ is purely ``geometrical''.

\begin{rmk}\label{rmk:size0}
  Let $\pi$ be a derivation in $\mathsf{D} \subseteq \Nk$:
  \begin{enumerate}
    \item\label{rmk:size0.intuitionistic} $\Size_{\RAAp}(\pi) = 0$ if and only if $\pi$ is \Jstandard;
    %there is at most one discharging instance of the rule $\Raa$ in $\pi$% that is not an instance of $\Efq$
%     ; moreover this instance, if any, is the last rule of $\pi$.
    \item\label{rmk:size0.minimal} $\Size_{\RAA}(\pi) = 0$ if and only if %there is at most one instance of the rule $\Raa$ in $\pi$ and this instance, if any, is the last rule of $\pi$
    $\pi$ is \Mstandard.

  \end{enumerate}
\end{rmk}

Intuitively, in a derivation $\pi$ in $\mathsf{D} \subseteq \Nk$, an instance (resp.~a discharging instance) $\mathsf{r}$ of the rule $\Raa$ is $\RAA_{\pi}$-maximal (resp.~$\RAAp_{\pi}$-maximal) when there are no other instances %of $\Raa$ 
(resp.~%no other instances of $\Raa$ that are not instances of $\Efq$
discharging instances) of $\Raa$ above $\mathsf{r}$.

Since a derivation $\pi$ in $\mathsf{D} \subseteq \Nk$ can be seen as a finite tree, if $\RAA_{\pi} \neq \emptyset$ (resp.~$\RAAp_{\pi} \neq \emptyset$), \Ie~if there is at least one instance (resp.~discharging instance) of the rule $\Raa$ in $\pi$, then there is a $\RAA_{\pi}$-maximal (resp.~$\RAAp_{\pi}$-maximal discharging) instance of the rule $\Raa$ in $\pi$.

%\end{document} %3

%\documentclass[main.tex]{subfiles} %Section 4
% 
%\begin{document}
\section{\texorpdfstring{Reduction steps for the postponement of $\Raa$}{Reduction steps for the postponement of raa}}
\label{sect:rewrite}

We define reduction steps case by case, %classifying them with respect to 
depending on the inference rule instantiated immediately below the %focused 
instance of $\Raa$ under focus (thus there is no case with a $0$-ary inference rule).

\begin{description}
  \item[$\lnot$ introduction:]
  {\small
  \begin{align}\label{eq:notintro}
    \Pi = 
    \begin{prooftree}
      \Hypo{\ulcorner A \urcorner^2, \ulcorner \lnot \bot \urcorner^1}
      \Ellipsis{$\pi'$}{\; \bot \;}
      \Infer1[\footnotesize$\Raa^1$]{\bot}
      \Infer1[\footnotesize$\lnot_\Intro^2$]{\lnot A}
      \Ellipsis{$\pi$}{}
    \end{prooftree}
    \qquad
    &\leadsto
    \qquad
    \begin{prooftree}
	\Hypo{\ulcorner A \urcorner^2,}
	  \Hypo{\ulcorner \bot \urcorner^1}
	\Infer1[\footnotesize$\lnot_\Intro^1$]{\lnot \bot}
	\Infer[no rule,separation=0.2em]2{}
	\Ellipsis{$\pi'$}{\bot}
	\Infer1[\footnotesize$\lnot_\Intro^2$]{\lnot A}
      \Ellipsis{$\pi$}{}
    \end{prooftree}
    = \Pi'
  \end{align}
  }
  
  \item[$\lnot$ elimination:]
  \begin{subequations}\label{eq:notelimgeneral}
  {\small
  \begin{align}
    \Pi = 
    \begin{prooftree}
      \Hypo{\ulcorner \lnot \lnot A \urcorner^1}
      \Ellipsis{$\pi'$}{\; \bot \;}
      \Infer1[\footnotesize$\Raa^1$]{\lnot A}
      \Hypo{}
      \Ellipsis{$\pi''$}{A}
      \Infer2[\footnotesize$\lnot_\Elim$]{\bot}
      \Ellipsis{$\pi$}{}
    \end{prooftree}
    \qquad
    &\leadsto
    \qquad
%     \begin{prooftree}
%       \Hypo{\ulcorner \bot \urcorner^3}
%       \Infer1[\footnotesize$\lnot_\Intro^3$]{\lnot \bot}
%       \Hypo{\ulcorner \lnot A \urcorner^2}
%       \Hypo{}
%       \Ellipsis{$\pi''$}{A}
%       \Infer2[\footnotesize$\lnot _\Elim$]{\bot}      
%       \Infer2[\footnotesize$\lnot_\Elim$]{\bot}
%       \Infer1[\footnotesize$\lnot_\Intro^2$]{\lnot \lnot A}
%       \Ellipsis{$\pi'$}{\bot}
%       \Ellipsis{$\pi$}{}
%     \end{prooftree}
    \begin{prooftree}
      \Hypo{\ulcorner \lnot A \urcorner^1}
      \Hypo{}
      \Ellipsis{$\pi''$}{A}
      \Infer2[\footnotesize$\lnot _\Elim$]{\bot}      
      \Infer1[\footnotesize$\lnot_\Intro^1$]{\lnot \lnot A}
      \Ellipsis{$\pi'$}{\bot}
      \Ellipsis{$\pi$}{}
    \end{prooftree}
    = \Pi'
  \end{align}
  }\nopagebreak where the last rule of the derivation $\pi''$ is not an instance of the rule $\Raa$;
    
  {\small
  \begin{align}
    \Pi = 
    \begin{prooftree}
      \Hypo{}
      \Ellipsis{$\pi'$}{\lnot A}
      \Hypo{\ulcorner \lnot A \urcorner^1}
      \Ellipsis{$\pi''$}{\; \bot \;}
      \Infer1[\footnotesize$\Raa^1$]{A}
      \Infer2[\footnotesize$\lnot_\Elim$]{\bot}
      \Ellipsis{$\pi$}{}
    \end{prooftree}
    \qquad
    &\leadsto
    \qquad
%     \begin{prooftree}
%       \Hypo{\ulcorner \bot \urcorner^3}
%       \Infer1[\footnotesize$\lnot_\Intro^3$]{\lnot \bot}
%       \Hypo{}
%       \Ellipsis{$\pi'$}{\lnot A}
%       \Hypo{\ulcorner A \urcorner^2}
%       \Infer2[\footnotesize$\lnot_\Elim$]{\bot}      
%       \Infer2[\footnotesize$\lnot_\Elim$]{\bot}
%       \Infer1[\footnotesize$\lnot_\Intro^2$]{\lnot A}
%       \Ellipsis{$\pi''$}{\bot}
%       \Ellipsis{$\pi$}{}
%     \end{prooftree}
%     \begin{prooftree}
% %       \Hypo{\ulcorner \bot \urcorner^3}
% %       \Infer1[\footnotesize$\lnot_\Intro^3$]{\lnot \bot}
%       \Hypo{}
%       \Ellipsis{$\pi'$}{\lnot A}
%       \Hypo{\ulcorner A \urcorner^2}
%       \Infer2[\footnotesize$\lnot_\Elim$]{\bot}      
%       \Infer1[\footnotesize$\lnot_\Intro^2$]{\lnot A}
%       \Ellipsis{$\pi''$}{\bot}
%       \Ellipsis{$\pi$}{}
%     \end{prooftree}
    \begin{prooftree}
      \Hypo{}
      \Ellipsis{$\pi'$}{\lnot A}
      \Ellipsis{$\pi''$}{\bot}
      \Ellipsis{$\pi$}{}
    \end{prooftree}
    \quad = \Pi'
  \end{align}
  }where the last rule of the derivation $\pi'$ is not an instance of the rule $\Raa$;
    
  {\small
  \begin{align}\label{eq:notelim}
    \Pi = 
    \begin{prooftree}
      \Hypo{\ulcorner \lnot \lnot A \urcorner^1}
      \Ellipsis{$\pi'$}{\; \bot \;}
      \Infer1[\footnotesize$\Raa^1$]{\lnot A}
      \Hypo{\ulcorner \lnot A \urcorner^2}
      \Ellipsis{$\pi''$}{\; \bot \;}
      \Infer1[\footnotesize$\Raa^2$]{A}
      \Infer2[\footnotesize$\lnot_\Elim$]{\bot}
      \Ellipsis{$\pi$}{}
    \end{prooftree}
    \qquad
    &\leadsto
    \qquad
%     \begin{prooftree}
%       \Hypo{\ulcorner \bot \urcorner^3}
%       \Infer1[\small$\lnot_\Intro^3$]{\lnot \bot}
%       \Hypo{\ulcorner \lnot A \urcorner^5}
%       \Hypo{\ulcorner A \urcorner^4}
%       \Infer2[\footnotesize$\lnot_\Elim$]{\bot}      
%       \Infer2[\footnotesize$\lnot_\Elim$]{\bot}
%       \Infer1[\footnotesize$\lnot_\Intro^4$]{\lnot A}
%       \Ellipsis{$\pi''$}{\; \bot \;}
%       \Infer1[\footnotesize$\lnot_\Intro^5$]{\lnot \lnot A}
%       \Ellipsis{$\pi'$}{\bot}
%       \Ellipsis{$\pi$}{}
%     \end{prooftree}
    \begin{prooftree}
%       \Hypo{\ulcorner \bot \urcorner^3}
%       \Infer1[\small$\lnot_\Intro^3$]{\lnot \bot}
      \Hypo{\ulcorner \lnot A \urcorner^2}
      \Hypo{\ulcorner A \urcorner^1}
      \Infer2[\footnotesize$\lnot_\Elim$]{\bot}      
%       \Infer2[\footnotesize$\lnot_\Elim$]{\bot}
      \Infer1[\footnotesize$\lnot_\Intro^1$]{\lnot A}
      \Ellipsis{$\pi''$}{\; \bot \;}
      \Infer1[\footnotesize$\lnot_\Intro^2$]{\lnot \lnot A}
      \Ellipsis{$\pi'$}{\bot}
      \Ellipsis{$\pi$}{}
    \end{prooftree}
    = \Pi'
  \end{align}
  }
  \end{subequations}

  \item[$\land$ introduction:] 
  \begin{subequations}
  {\small
  \begin{align}
    \Pi = 
    \begin{prooftree}
      \Hypo{\ulcorner \lnot A \urcorner^1}
      \Ellipsis{$\pi'$}{\; \bot \;}
      \Infer1[\footnotesize$\Raa^1$]{A}
      \Hypo{}
      \Ellipsis{$\pi''$}{B}
      \Infer2[\footnotesize$\land_\Intro$]{A \land B}
      \Ellipsis{$\pi$}{}
    \end{prooftree}
    \qquad
    &\leadsto
    \qquad
    \begin{prooftree}
      \Hypo{\ulcorner \lnot (A \land B) \urcorner^2}
      \Hypo{\ulcorner A \urcorner^1}
      \Hypo{}
      \Ellipsis{$\pi''$}{B}
      \Infer2[\footnotesize$\land_\Intro$]{A \land B}      
      \Infer2[\footnotesize$\lnot_\Elim$]{\bot}
      \Infer1[\footnotesize$\lnot_\Intro^1$]{\lnot A}
      \Ellipsis{$\pi'$}{\; \bot \;}
      \Infer1[\footnotesize$\Raa^2$]{A \land B}
      \Ellipsis{$\pi$}{}
    \end{prooftree}
    = \Pi'
  \end{align}
  }where the last rule of the derivation $\pi''$ is not an instance of the rule $\Raa$;
    
  {\small
  \begin{align}
    \Pi = 
    \begin{prooftree}
      \Hypo{}
      \Ellipsis{$\pi'$}{A}
      \Hypo{\ulcorner \lnot B \urcorner^1}
      \Ellipsis{$\pi''$}{\; \bot \;}
      \Infer1[\footnotesize$\Raa^1$]{B}
      \Infer2[\footnotesize$\land_\Intro$]{A \land B}
      \Ellipsis{$\pi$}{}
    \end{prooftree}
    \qquad
    &\leadsto
    \qquad
    \begin{prooftree}
      \Hypo{\ulcorner \lnot (A \land B) \urcorner^2}
      \Hypo{}
      \Ellipsis{$\pi'$}{A}
      \Hypo{\ulcorner B \urcorner^1}
      \Infer2[\footnotesize$\land_\Intro$]{A \land B}      
      \Infer2[\footnotesize$\lnot_\Elim$]{\bot}
      \Infer1[\footnotesize$\lnot_\Intro^1$]{\lnot B}
      \Ellipsis{$\pi''$}{\; \bot \;}
      \Infer1[\footnotesize$\Raa^2$]{A \land B}
      \Ellipsis{$\pi$}{}
    \end{prooftree}
    = \Pi'
  \end{align}
  }where the last rule of the derivation $\pi'$ is not an instance of the rule $\Raa$;
  
  {\small
  \begin{align}\label{eq:andelim}
    \Pi = 
    \begin{prooftree}[label separation = 0.2em]
      \Hypo{\ulcorner \lnot A \urcorner^1}
      \Ellipsis{$\pi'$}{\; \bot \;}
      \Infer1[\footnotesize$\Raa^1$]{A}
      \Hypo{\ulcorner \lnot B \urcorner^2}
      \Ellipsis{$\pi''$}{\; \bot \;}
      \Infer1[\footnotesize$\Raa^2$]{B}
      \Infer2[\footnotesize$\land_\Intro$]{A \land B}
      \Ellipsis{$\pi$}{}
    \end{prooftree}
    \quad
    &\leadsto
    \
    \begin{prooftree}[label separation = 0.2em]
      \Hypo{\ulcorner \lnot (A \land B) \urcorner^3}
      \Hypo{\ulcorner A \urcorner^2}
      \Hypo{\ulcorner B \urcorner^1}
      \Infer2[\footnotesize$\land_\Intro$]{A \land B}      
      \Infer2[\footnotesize$\lnot_\Elim$]{\bot}
      \Infer1[\footnotesize$\lnot_\Intro^1$]{\lnot B}
      \Ellipsis{$\pi''$}{\; \bot \;}
      \Infer1[\footnotesize$\lnot_\Intro^2$]{\lnot A}
      \Ellipsis{$\pi'$}{\bot}
      \Infer1[\footnotesize$\Raa^3$]{A \land B}
      \Ellipsis{$\pi$}{}
    \end{prooftree}
    \!\!\!\!= \Pi'
  \end{align}
  }
  \end{subequations}
  
  \item[$\land$ elimination:]
  \begin{subequations}
  {\small
  \begin{align}
    \Pi = 
    \begin{prooftree}
      \Hypo{\ulcorner \lnot (A \land B) \urcorner^1}
      \Ellipsis{$\pi'$}{\; \bot \;}
      \Infer1[\footnotesize$\Raa^1$]{A \land B}
      \Infer1[\footnotesize$\land_{\ElimOne}$]{A}
      \Ellipsis{$\pi$}{}
    \end{prooftree}
    \qquad
    &\leadsto
    \qquad
    \begin{prooftree}
      \Hypo{\ulcorner \lnot A \urcorner^2}
      \Hypo{\ulcorner A \land B \urcorner^1}
      \Infer1[\footnotesize$\land_{\ElimOne}$]{A}
      \Infer2[\footnotesize$\lnot_\Elim$]{\;\bot\;}
      \Infer1[\footnotesize$\lnot_\Intro^1$]{\lnot (A \land B)}
      \Ellipsis{$\pi'$}{\; \bot \;}
      \Infer1[\footnotesize$\Raa^2$]{\;A\;}
      \Ellipsis{$\pi$}{}
    \end{prooftree}
    = \Pi'
    \\ \notag\\
    \Pi = 
    \begin{prooftree}
      \Hypo{\ulcorner \lnot (A \land B) \urcorner^1}
      \Ellipsis{$\pi'$}{\; \bot \;}
      \Infer1[\footnotesize$\Raa^1$]{A \land B}
      \Infer1[\footnotesize$\land_{\ElimTwo}$]{B}
      \Ellipsis{$\pi$}{}
    \end{prooftree}
    \qquad
    &\leadsto
    \qquad
    \begin{prooftree}
      \Hypo{\ulcorner \lnot B \urcorner^2}
      \Hypo{\ulcorner A \land B \urcorner^1}
      \Infer1[\footnotesize$\land_{\ElimTwo}$]{B}
      \Infer2[\footnotesize$\lnot_\Elim$]{\;\bot\;}
      \Infer1[\footnotesize$\lnot_\Intro^1$]{\lnot (A \land B)}
      \Ellipsis{$\pi'$}{\; \bot \;}
      \Infer1[\footnotesize$\Raa^2$]{\;B\;}
      \Ellipsis{$\pi$}{}
    \end{prooftree}
    = \Pi'
  \end{align}
  }
  \end{subequations}
  
  \item[$\lor$ introduction:]
  \begin{subequations}
  {\small
  \begin{align}
    \Pi = 
    \begin{prooftree}
      \Hypo{\ulcorner \lnot A \urcorner^1}
      \Ellipsis{$\pi'$}{\; \bot \;}
      \Infer1[\footnotesize$\Raa^1$]{A}
      \Infer1[\footnotesize$\lor_{\IntroOne}$]{A \lor B}
      \Ellipsis{$\pi$}{}
    \end{prooftree}
    \qquad
    &\leadsto
    \qquad
    \begin{prooftree}
      \Hypo{\ulcorner \lnot (A \lor B) \urcorner^2}
      \Hypo{\ulcorner A \urcorner^1}
      \Infer1[\footnotesize$\lor_{\IntroOne}$]{A \lor B}
      \Infer2[\footnotesize$\lnot_\Elim$]{\;\bot\;}
      \Infer1[\footnotesize$\lnot_\Intro^1$]{\lnot A}
      \Ellipsis{$\pi'$}{\; \bot \;}
      \Infer1[\footnotesize$\Raa^2$]{A \lor B}
      \Ellipsis{$\pi$}{}
    \end{prooftree}
    = \Pi'
    \\ \notag\\
    \Pi = 
    \begin{prooftree}
      \Hypo{\ulcorner \lnot B \urcorner^1}
      \Ellipsis{$\pi'$}{\; \bot \;}
      \Infer1[\footnotesize$\Raa^1$]{B}
      \Infer1[\footnotesize$\lor_{\IntroTwo}$]{A \lor B}
      \Ellipsis{$\pi$}{}
    \end{prooftree}
    \qquad
    &\leadsto
    \qquad
    \begin{prooftree}
      \Hypo{\ulcorner \lnot (A \lor B) \urcorner^2}
      \Hypo{\ulcorner B \urcorner^1}
      \Infer1[\footnotesize$\lor_{\IntroTwo}$]{A \lor B}
      \Infer2[\footnotesize$\lnot_\Elim$]{\;\bot\;}
      \Infer1[\footnotesize$\lnot_\Intro^1$]{\lnot B}
      \Ellipsis{$\pi'$}{\; \bot \;}
      \Infer1[\footnotesize$\Raa^2$]{A \lor B}
      \Ellipsis{$\pi$}{}
    \end{prooftree}
    = \Pi'
  \end{align}
  }
  \end{subequations}
  
  \item[$\lor$ elimination:]
  \begin{subequations}
  {\small
  \begin{align}
    \Pi = \!\!\!\!
    \begin{prooftree}[label separation = 0.2em, separation = 0.7em]
      \Hypo{\ulcorner \lnot(A \lor B) \urcorner^1}
      \Ellipsis{$\pi'$}{\bot}
      \Infer1[\scriptsize$\Raa^1$]{A \lor B}
      \Hypo{\ulcorner A \urcorner^2}
      \Ellipsis{$\pi''$}{C}
      \Hypo{\ulcorner B \urcorner^2}
      \Ellipsis{$\pi'''$}{C}
      \Infer3[\scriptsize$\lor_\Elim^2$]{C}
      \Ellipsis{$\pi$}{}
    \end{prooftree}
    \ 
    &\leadsto
    \ 
    \begin{prooftree}[label separation = 0.2em, separation = 0.7em]
      \Hypo{\ulcorner A \lor B \urcorner^2}
      \Hypo{\ulcorner \lnot C \urcorner^3}
      \Hypo{\ulcorner A \urcorner^1}
	\Ellipsis{$\pi''$}{C}
	\Infer[separation = 0em]2[\scriptsize$\lnot_\Elim$]{\; \bot \;}
      \Hypo{\ulcorner \lnot C \urcorner^3}
      \Hypo{\ulcorner B \urcorner^1}
	\Ellipsis{$\pi'''$}{C}
	\Infer[separation = 0em]2[\scriptsize$\lnot_\Elim$]{\; \bot \;}
      \Infer3[\scriptsize$\lor_\Elim^1$]{\bot}
      \Infer1[\scriptsize$\lnot_\Intro^2$]{\lnot(A \lor B)}
      \Ellipsis{$\pi'$}{\; \bot \;}
      \Infer1[\scriptsize$\Raa^3$]{C}
      \Ellipsis{$\pi$}{}
    \end{prooftree}
    \!\!\!\! = \Pi'
  \end{align}
  }where the last rule of the derivations $\pi''$ and $\pi'''$ is not an instance of %the rule 
  $\Raa$;
%   \begin{align*}
%     \begin{prooftree}
%       \Hypo{\ulcorner \lnot C \urcorner^1}
%       \Hypo{\ulcorner A \urcorner^2}
%       \Ellipsis{$\pi''$}{C}
%       \Infer2[\small$\lnot_\Elim$]{\; \bot \;}
%       \Infer1[\small$\lnot_\Intro^2$]{\lnot A}
%       \Hypo{\ulcorner \lnot C \urcorner^1}
%       \Hypo{\ulcorner B \urcorner^2}
%       \Ellipsis{$\pi'''$}{C}
%       \Infer2[\small$\lnot_\Elim$]{\; \bot \;}
%       \Infer1[\small$\lnot_\Intro^2$]{\lnot B}
%       \Infer2[\small$\land_\Intro$]{\lnot A \land \lnot B}
%       \Ellipsis{}{\lnot(A \lor B)}
%       \Ellipsis{$\pi'$}{\; \bot \;}
%       \Infer1[\small$\Raa^1$]{C}
%       \Ellipsis{$\pi$}{}
%     \end{prooftree}
%   \end{align*}
  
  {\small
  \begin{align}
    \Pi = 
    \begin{prooftree}[label separation = 0.2em, separation = 0.2em]
      \Hypo{}
      \Ellipsis{$\pi'$}{A \lor B}
      \Hypo{\ulcorner A \urcorner^2\!, \ulcorner \lnot C \urcorner^1}
      \Ellipsis{$\pi''$}{\; \bot \;}
      \Infer1[\scriptsize$\Raa^1$]{C}
      \Hypo{\ulcorner B \urcorner^2}
      \Ellipsis{$\pi'''$}{C}
      \Infer3[\scriptsize$\lor_\Elim^2$]{C}
      \Ellipsis{$\pi$}{}
    \end{prooftree}
    \ 
    &\leadsto
    \ 
    \begin{prooftree}[label separation = 0.2em, separation = 0em]
      \Hypo{}
      \Ellipsis{$\pi'$}{A \lor B}
      \Hypo{\ulcorner A \urcorner^1\!, \ulcorner \lnot C \urcorner^2}
	\Ellipsis{$\pi''$}{\bot}
      \Hypo{\ulcorner \lnot C \urcorner^2}
      \Hypo{\ulcorner B \urcorner^1}
	\Ellipsis{$\pi'''$}{C}
	\Infer2[\scriptsize$\lnot_\Elim$]{\bot}
      \Infer3[\scriptsize$\lor_\Elim^1$]{\; \bot \;}
      \Infer1[\scriptsize$\Raa^2$]{C}
      \Ellipsis{$\pi$}{}
    \end{prooftree}
    = \Pi'
  \end{align}\nopagebreak
  }\nopagebreak where the last rule of the derivations $\pi'$ and $\pi'''$ is not an instance of %the rule 
  $\Raa$;

  {\small
  \begin{align}
    \Pi = 
    \begin{prooftree}[label separation = 0.2em, separation = 0.5em]
      \Hypo{}
      \Ellipsis{$\pi'$}{A \lor B}
      \Hypo{\ulcorner A \urcorner^2}
      \Ellipsis{$\pi''$}{C}
      \Hypo{\ulcorner B \urcorner^2\!, \ulcorner \lnot C \urcorner^1}
      \Ellipsis{$\pi'''$}{\; \bot \;}
      \Infer1[\scriptsize$\Raa^1$]{C}
      \Infer3[\scriptsize$\lor_\Elim^2$]{C}
      \Ellipsis{$\pi$}{}
    \end{prooftree}
    \ 
    &\leadsto
    \ 
    \begin{prooftree}[label separation = 0.2em, separation = 0.7em]
      \Hypo{}
      \Ellipsis{$\pi'$}{A \lor B}
      \Hypo{\ulcorner \lnot C \urcorner^2}
      \Hypo{\ulcorner A \urcorner^1}
	\Ellipsis{$\pi''$}{C}
	\Infer[separation = 0em]2[\scriptsize$\lnot_\Elim$]{\bot}
      \Hypo{\ulcorner B \urcorner^1\!, \ulcorner \lnot C \urcorner^2}
	\Ellipsis{$\pi'''$}{\bot}
      \Infer3[\scriptsize$\lor_\Elim^1$]{\; \bot \;}
      \Infer1[\scriptsize$\Raa^2$]{C}
      \Ellipsis{$\pi$}{}
    \end{prooftree}
    \!\!\!\! = \Pi'
  \end{align}
  }\nopagebreak where the last rule of the derivations $\pi'$ and $\pi''$ is not an instance of %the rule 
  $\Raa$;

  {\small
  \begin{align}
    \Pi = 
    \begin{prooftree}[label separation = 0.2em, separation = 0.6em]
      \Hypo{}
	\Ellipsis{$\pi'$}{A \lor B}
      \Hypo{\ulcorner A \urcorner^3\!, \ulcorner \lnot C \urcorner^1}
	\Ellipsis{$\pi''$}{\; \bot \;}
	\Infer1[\scriptsize$\Raa^1$]{C}
      \Hypo{\ulcorner B \urcorner^3\!, \ulcorner \lnot C \urcorner^2}
	\Ellipsis{$\pi'''$}{\; \bot \;}
	\Infer1[\scriptsize$\Raa^2$]{C}
      \Infer3[\scriptsize$\lor_\Elim^3$]{C}
      \Ellipsis{$\pi$}{}
    \end{prooftree}
    \ 
    &\leadsto
    \ 
    \begin{prooftree}[label separation = 0.2em, separation = 0.6em]
      \Hypo{}
	\Ellipsis{$\pi'$}{A \lor B}
      \Hypo{\ulcorner A \urcorner^1\!, \ulcorner \lnot C \urcorner^2}
	\Ellipsis{$\pi''$}{\bot}
      \Hypo{\ulcorner B \urcorner^1\!, \ulcorner \lnot C \urcorner^2}
	\Ellipsis{$\pi'''$}{\bot}
      \Infer3[\scriptsize$\lor_\Elim^1$]{\; \bot \;}
      \Infer1[\scriptsize$\Raa^2$]{C}
      \Ellipsis{$\pi$}{}
    \end{prooftree}
    \!\!\!\! = \Pi'
  \end{align}
  }\nopagebreak where the last rule of the derivation $\pi'$ is not an instance of the rule $\Raa$;

  {\small
  \begin{align}
    \Pi = \!\!\!\!
    \begin{prooftree}[label separation = 0.2em, separation=0.5em]
      \Hypo{\ulcorner \lnot(A \lor B) \urcorner^1}
	\Ellipsis{$\pi'$}{\bot}
	\Infer1[\scriptsize$\Raa^1$]{A \lor B}
      \Hypo{\ulcorner A \urcorner^3\!, \ulcorner \lnot C \urcorner^2}
      \Ellipsis{$\pi''$}{\; \bot \;}
      \Infer1[\scriptsize$\Raa^2$]{C}
      \Hypo{\ulcorner B \urcorner^3}
	\Ellipsis{$\pi'''$}{C}
      \Infer3[\scriptsize$\lor_\Elim^3$]{C}
      \Ellipsis{$\pi$}{}
    \end{prooftree}
    \ 
    &\leadsto
    \ 
    \begin{prooftree}[label separation = 0.2em, separation=0em]
      \Hypo{\ulcorner A \lor B \urcorner^2}
      \Hypo{\ulcorner A \urcorner^1\!, \ulcorner \lnot C \urcorner^3}
      \Ellipsis{$\pi''$}{\bot}
	\Hypo{\ulcorner \lnot C \urcorner^3}
	\Hypo{\ulcorner B \urcorner^1}
	\Ellipsis{$\pi'''$}{C}
      \Infer2[\scriptsize$\lnot_\Elim$]{\bot}
      \Infer3[\scriptsize$\lor_\Elim^1$]{\bot}
      \Infer1[\scriptsize$\lnot_\Intro^2$]{\lnot(A \lor B)}
      \Ellipsis{$\pi'$}{\; \bot \;}
      \Infer1[\scriptsize$\Raa^3$]{C}
      \Ellipsis{$\pi$}{}
    \end{prooftree}
    \!\!\!\!\! = \Pi'
  \end{align}
  }where the last rule of the derivation $\pi'''$ is not an instance of the rule $\Raa$;

  {\small
  \begin{align}
    \Pi = \!\!\!\!
    \begin{prooftree}[label separation = 0.2em, separation = 0.2em]
      \Hypo{\ulcorner \lnot(A \lor B) \urcorner^1}
	\Ellipsis{$\pi'$}{\bot}
	\Infer1[\scriptsize$\Raa^1$]{A \lor B}
      \Hypo{\ulcorner A \urcorner^3}
	\Ellipsis{$\pi''$}{C}
      \Hypo{\ulcorner B \urcorner^3\!, \ulcorner \lnot C \urcorner^2}
	\Ellipsis{$\pi'''$}{\; \bot \;}
	\Infer1[\scriptsize$\Raa^2$]{C}
      \Infer3[\scriptsize$\lor_\Elim^3$]{C}
      \Ellipsis{$\pi$}{}
    \end{prooftree}
    \ 
    &\leadsto
    \ 
    \begin{prooftree}[label separation = 0.2em, separation = 0.8em]
      \Hypo{\ulcorner A \lor B \urcorner^2}
	\Hypo{\ulcorner \lnot C \urcorner^3}
	\Hypo{\ulcorner A \urcorner^1}
	\Ellipsis{$\pi''$}{C}
      \Infer[separation = 0em]2[\scriptsize$\lnot_\Elim$]{\bot}
	\Hypo{\ulcorner B \urcorner^1\!, \ulcorner \lnot C \urcorner^3}
      \Ellipsis{$\pi'''$}{\bot}
      \Infer3[\scriptsize$\lor_\Elim^1$]{\bot}
      \Infer1[\scriptsize$\lnot_\Intro^2$]{\lnot(A \lor B)}
      \Ellipsis{$\pi'$}{\; \bot \;}
      \Infer1[\scriptsize$\Raa^3$]{C}
      \Ellipsis{$\pi$}{}
    \end{prooftree}
    \!\!\!\!\!\!\!\! = \Pi'
  \end{align}
  }where the last rule of the derivation $\pi''$ is not an instance of the rule $\Raa$;

  {\small
  \begin{align}
    \Pi = \!\!\!\!
    \begin{prooftree}[label separation = 0.2em, separation=0.5em]
      \Hypo{\ulcorner \lnot(A \lor B) \urcorner^1}
	\Ellipsis{$\pi'$}{\bot}
	\Infer1[\scriptsize$\Raa^1$]{A \lor B}
      \Hypo{\!\ulcorner A \urcorner^4\!, \ulcorner \lnot C \urcorner^2}
	\Ellipsis{$\pi''$}{\; \bot \;}
	\Infer1[\scriptsize$\Raa^2$]{C}
      \Hypo{\ulcorner B \urcorner^4\!, \ulcorner \lnot C \urcorner^3}
	\Ellipsis{$\pi'''$}{\; \bot \;}
	\Infer1[\scriptsize$\Raa^3$]{C}
      \Infer3[\scriptsize$\lor_\Elim^4$]{C}
      \Ellipsis{$\pi$}{}
    \end{prooftree}
    \ 
    &\leadsto
    \ 
    \begin{prooftree}[label separation = 0.2em, separation=0.7em]
      \Hypo{\ulcorner A \lor B \urcorner^2}
      \Hypo{\!\!\!\!\!\!\!\!\!\!\!\!\!\ulcorner A \urcorner^1\!, \ulcorner \lnot C \urcorner^3}
	\Ellipsis{$\pi''$}{\bot}
      \Hypo{\ulcorner B \urcorner^1\!, \ulcorner \lnot C \urcorner^3}
	\Ellipsis{$\pi'''$}{\bot}
      \Infer3[\scriptsize$\lor_\Elim^1$]{\bot}
      \Infer1[\scriptsize$\lnot_\Intro^2$]{\lnot(A \lor B)}
      \Ellipsis{$\pi'$}{\; \bot \;}
      \Infer1[\scriptsize$\Raa^3$]{C}
      \Ellipsis{$\pi$}{}
    \end{prooftree}
    \!\!\!\!\!\!\!\!\!\!\!\!\! = \Pi'
  \end{align}
  }
  \end{subequations}

  \item[$\to$ introduction:]
%   \begin{subequations}
  {\small
  \begin{align}\label{eq:tointro}
    \Pi = 
    \begin{prooftree}[label separation = 0.2em]
      \Hypo{\ulcorner A \urcorner^2, \ulcorner \lnot B \urcorner^1}
      \Ellipsis{$\pi'$}{\; \bot \;}
      \Infer1[\footnotesize$\Raa^1$]{B}
      \Infer1[\footnotesize$\to_\Intro^2$]{A \to B}
      \Ellipsis{$\pi$}{}
    \end{prooftree}
      \quad
      &\leadsto
      \quad
    \begin{prooftree}[label separation = 0.2em]
      \Hypo{\ulcorner \lnot (A \to B) \urcorner^3}
	\Hypo{\ulcorner A \urcorner^2,}
	  \Hypo{\ulcorner \lnot (A \to B) \urcorner^3}
	    \Hypo{\ulcorner B \urcorner^1}
	    \Infer1[\footnotesize$\to_\Intro^0$]{A \to B}
	  \Infer[separation = 0.7em]2[\footnotesize$\lnot_\Elim$]{\bot}
	\Infer1[\footnotesize$\lnot_\Intro^1$]{\lnot B}
	\Infer[no rule,separation=-1.2em]2{}
	\Ellipsis{$\pi'$}{\; \bot \;}
	\Infer1[\footnotesize$\Efq$]{B}
	\Infer1[\footnotesize$\to_\Intro^2$]{A \to B}
      \Infer[separation = -0.2em]2[\footnotesize$\lnot_\Elim$]{\bot}
      \Infer1[\footnotesize$\Raa^3$]{A \to B}
      \Ellipsis{$\pi$}{}
    \end{prooftree}
    \!\!\!\!\!\!\! = \Pi'
  \end{align}
  }
%   \end{subequations}
\label{tointroefq}

  \item[$\to$ elimination:]
  \begin{subequations}\label{eq:toelimgeneral}
  {\small
  \begin{align}
    \Pi = \!
    \begin{prooftree}[label separation = 0.2em]
      \Hypo{\ulcorner \lnot (A \to B) \urcorner^1}
      \Ellipsis{$\pi'$}{\; \bot \;}
      \Infer1[\footnotesize$\Raa^1$]{A \to B}
      \Hypo{}
      \Ellipsis{$\pi''$}{A}
      \Infer[separation = 1em]2[\footnotesize$\to_\Elim$]{ B}
      \Ellipsis{$\pi$}{}
    \end{prooftree}
    \quad
    &\leadsto
    \quad
    \begin{prooftree}[label separation = 0.2em]
      \Hypo{\ulcorner \lnot B \urcorner^2}
      \Hypo{\ulcorner A \to B \urcorner^1}
      \Hypo{}
      \Ellipsis{$\pi''$}{A}
      \Infer[separation = 1em]2[\footnotesize$\to_\Elim$]{B}      
      \Infer[separation = 1em]2[\footnotesize$\lnot_\Elim$]{\bot}
      \Infer1[\footnotesize$\lnot_\Intro^1$]{\lnot (A \to B)}
      \Ellipsis{$\pi'$}{\; \bot \;}
      \Infer1[\footnotesize$\Raa^2$]{B}
      \Ellipsis{$\pi$}{}
    \end{prooftree}
    \!\!\! = \Pi'
  \end{align}
  }where the last rule of the derivation $\pi''$ is not an instance of the rule $\Raa$;
    
  {\small
  \begin{align}
    \Pi = 
    \begin{prooftree}[label separation = 0.3em]
      \Hypo{}
      \Ellipsis{$\pi'$}{A \to B}
      \Hypo{\ulcorner \lnot A \urcorner^1}
      \Ellipsis{$\pi''$}{\; \bot \;}
      \Infer1[\footnotesize$\Raa^1$]{A}
      \Infer[separation = 1em]2[\footnotesize$\to_\Elim$]{B}
      \Ellipsis{$\pi$}{}
    \end{prooftree}
    \qquad
    &\leadsto
    \qquad
    \begin{prooftree}[label separation = 0.3em]
      \Hypo{\ulcorner \lnot B \urcorner^2}
      \Hypo{}
      \Ellipsis{$\pi'$}{A \to B}
      \Hypo{\ulcorner A \urcorner^1}
      \Infer2[\footnotesize$\to_\Elim$]{B}      
      \Infer[separation = 1em]2[\footnotesize$\lnot_\Elim$]{\bot}
      \Infer1[\footnotesize$\lnot_\Intro^1$]{\lnot A}
      \Ellipsis{$\pi''$}{\; \bot \;}
      \Infer1[\footnotesize$\Raa^2$]{B}
      \Ellipsis{$\pi$}{}
    \end{prooftree}
    = \Pi'
    \end{align}
    }
    where the last rule of the derivation $\pi'$ is not an instance of the rule $\Raa$;
    
    {\small
    \begin{align}\label{eq:toelim}
    \Pi = 
    \begin{prooftree}[label separation = 0.2em]
      \Hypo{\ulcorner \lnot (A \to B) \urcorner^1}
      \Ellipsis{$\pi'$}{\; \bot \;}
      \Infer1[\footnotesize$\Raa^1$]{A \to B}
      \Hypo{\ulcorner \lnot A \urcorner^2}
      \Ellipsis{$\pi''$}{\; \bot \;}
      \Infer1[\footnotesize$\Raa^2$]{A}
      \Infer[separation = 0.7em]2[\footnotesize$\to_\Elim$]{B}
      \Ellipsis{$\pi$}{}
    \end{prooftree}
    \quad\ 
    &\leadsto
    \quad
    \begin{prooftree}[label separation = 0.2em]
      \Hypo{\ulcorner \lnot B \urcorner^3}
      \Hypo{\ulcorner A \to B \urcorner^2}
      \Hypo{\ulcorner A \urcorner^1}
      \Infer[separation = 0.8em]2[\footnotesize$\to_\Elim$]{B}      
      \Infer[separation = 0.5em]2[\footnotesize$\lnot_\Elim$]{\bot}
      \Infer1[\footnotesize$\lnot_\Intro^1$]{\lnot A}
      \Ellipsis{$\pi''$}{\; \bot \;}
      \Infer1[\footnotesize$\lnot_\Intro^2$]{\lnot (A \to B)}
      \Ellipsis{$\pi'$}{\; \bot \;}
      \Infer1[\footnotesize$\Raa^3$]{B}
      \Ellipsis{$\pi$}{}
    \end{prooftree}
    \!\!\!\!\!\! = \Pi'
  \end{align}
  }
  \end{subequations}
  
  \item[$\exists$ introduction:]
%   \begin{subequations}
  {\small
  \begin{align}
    \Pi = 
    \begin{prooftree}
      \Hypo{\ulcorner \lnot A\Sub{t}{x} \urcorner^1}
      \Ellipsis{$\pi'$}{\bot}
      \Infer1[\footnotesize$\Raa^1$]{A\Sub{t}{x}}
      \Infer1[\footnotesize$\exists_\Intro$]{\exists x A}
      \Ellipsis{$\pi$}{}
    \end{prooftree}
    \qquad
    &\leadsto
    \qquad
    \begin{prooftree}
      \Hypo{\ulcorner \lnot \exists x A \urcorner^2}
       \Hypo{\ulcorner A\Sub{t}{x} \urcorner^1}
     \Infer1[\footnotesize$\exists_\Intro$]{\exists x A}
     \Infer2[\footnotesize$\lnot_\Elim$]{\bot}
     \Infer1[\footnotesize$\lnot_\Intro^1$]{\lnot A\Sub{t}{x}}
     \Ellipsis{$\pi'$}{\bot}
      \Infer1[\footnotesize$\Raa^2$]{\exists  x A}
      \Ellipsis{$\pi$}{}      
    \end{prooftree}
    = \Pi'
  \end{align}
  }
%   \end{subequations}
 
  \item[$\exists$ elimination:]
  \begin{subequations}
  {\small
  \begin{align}
    \Pi = 
    \begin{prooftree}[label separation = .3em]
	\Hypo{\ulcorner \lnot \exists x A \urcorner^1}
	\Ellipsis{$\pi'$}{\bot}
      \Infer1[\footnotesize$\Raa^1$]{\exists x A}
	\Hypo{\ulcorner A \urcorner^2}
	\Ellipsis{$\pi''$}{C}
      \Infer[separation = 1em]2[\footnotesize$\exists_\Elim^2$]{C}
      \Ellipsis{$\pi$}{}
    \end{prooftree}
    \qquad
    &\leadsto
    \qquad
    \begin{prooftree}[label separation = .3em]
	\Hypo{\ulcorner  \exists x A \urcorner^2}
	  \Hypo{\ulcorner \lnot C \urcorner^3}
	  \Hypo{\ulcorner A \urcorner^1}
	\Ellipsis{$\pi''$}{C}
      \Infer[separation = 1em]2[\footnotesize$\lnot_\Elim$]{\bot}
     \Infer[separation = 1em]2[\footnotesize$\exists_\Elim^1$]{\bot}
     \Infer1[\footnotesize$\lnot_\Intro^2$]{\lnot \exists x A}
     \Ellipsis{$\pi'$}{\; \bot \;}
      \Infer1[\footnotesize$\Raa^3$]{C}
      \Ellipsis{$\pi$}{}      
    \end{prooftree}
    = \Pi'
  \end{align}
  }where the last rule of the derivation $\pi''$ is not an instance of the rule $\Raa$ (notice that the variable $x$ does not occur free in $C$ and hence even in $\lnot C$, thus the rule $\exists_\Elim$ is correctly instantiated in $\Pi'$);

  {\small
  \begin{align}
    \Pi = 
    \begin{prooftree}
	\Hypo{}
	\Ellipsis{$\pi'$}{\exists x A}
	\Hypo{\ulcorner \lnot C \urcorner^1, \ulcorner A \urcorner^2}
	\Ellipsis{$\pi''$}{\; \bot \;}
      \Infer1[\footnotesize$\Raa^1$]{C}
      \Infer2[\footnotesize$\exists_\Elim^2$]{C}
      \Ellipsis{$\pi$}{}
    \end{prooftree}
    \qquad
    &\leadsto
    \qquad
    \begin{prooftree}
	  \Hypo{}
	\Ellipsis{$\pi'$}{\exists x A}
	  \Hypo{\ulcorner \lnot C \urcorner^2, \ulcorner A \urcorner^1}
	\Ellipsis{$\pi''$}{\bot}
     \Infer2[\footnotesize$\exists_\Elim^1$]{\; \bot \;}
      \Infer1[\footnotesize$\Raa^2$]{C}
      \Ellipsis{$\pi$}{}      
    \end{prooftree}
    = \Pi'
  \end{align}
  }where the last rule of the derivation $\pi'$ is not an instance of the rule $\Raa$ (notice that the variable $x$ does not occur free in $C$ and hence even in $\lnot C$, thus the rule $\exists_\Elim$ is correctly instantiated in $\Pi'$);

  {\small
  \begin{align}
    \Pi = 
    \begin{prooftree}
	\Hypo{\ulcorner \lnot \exists x A \urcorner^1}
	\Ellipsis{$\pi'$}{\bot}
      \Infer1[\footnotesize$\Raa^1$]{\exists x A}
	\Hypo{\ulcorner \lnot C \urcorner^2, \ulcorner A \urcorner^3}
	\Ellipsis{$\pi''$}{\; \bot \;}
      \Infer1[\footnotesize$\Raa^2$]{C}
      \Infer[separation = .9em]2[\footnotesize$\exists_\Elim^3$]{C}
      \Ellipsis{$\pi$}{}
    \end{prooftree}
    \qquad
    &\leadsto
    \qquad
    \begin{prooftree}
	\Hypo{\ulcorner  \exists x A \urcorner^2}
	  \Hypo{\ulcorner \lnot C \urcorner^3, \ulcorner A \urcorner^1}
	\Ellipsis{$\pi''$}{\bot}
     \Infer[separation = .4em]2[\footnotesize$\exists_\Elim^1$]{\bot}
     \Infer1[\footnotesize$\lnot_\Intro^2$]{\lnot \exists x A}
     \Ellipsis{$\pi'$}{\; \bot \;}
      \Infer1[\footnotesize$\Raa^3$]{C}
      \Ellipsis{$\pi$}{}      
    \end{prooftree}
    = \Pi'
  \end{align}
  }(notice that the variable $x$ does not occur free in $C$ and hence even in $\lnot C$, thus the rule $\exists_\Elim$ is correctly instantiated in $\Pi'$);

%   \item[$\forall$ introduction:]
%   {\small
%   \begin{align*}
%     \Pi = 
%     \begin{prooftree}
%       \Hypo{\ulcorner \lnot A \urcorner^1}
%       \Ellipsis{$\pi'$}{\; \bot \;}
%       \Infer1[\small$\Raa^1$]{A}
%       \Infer1[\small$\forall_\Intro$]{\forall x A}
%       \Ellipsis{$\pi$}{}
%     \end{prooftree}
%     \qquad
%     &\leadsto
%     \qquad
%     \begin{prooftree}
%       \Hypo{\ulcorner \lnot \forall x A \urcorner^1}
% 	\Hypo{\ulcorner A \urcorner^2}
%       \Infer1[\small$\forall_\Intro$]{\forall x A}
%       \Infer2[\small$\lnot_\Elim$]{\bot}
%       \Infer1[\small$\lnot_i^2$]{\lnot A}
%       \Ellipsis{$\pi'$}{\; \bot \;}
%       \Infer1[\small$\Raa^1$]{\forall x A}
%       \Ellipsis{$\pi$}{}
%       \Alter{\color{blue}}
%     \end{prooftree}
%     = \Pi'
%   \end{align*}
%   }\nopagebreak \emph{Warning:} in $\Pi'$ the rule $\forall_\Intro$ is not correctly instantiated because the variable $x$ may occur free in $A$, which is an open assumption when the rule $\forall_\Intro$ is applied.
  \end{subequations}
  
  \item[$\forall$ elimination:]
  {\small
  \begin{align}
    \Pi = 
    \begin{prooftree}
      \Hypo{\ulcorner \lnot \forall x A \urcorner^1}
      \Ellipsis{$\pi'$}{\bot}
      \Infer1[\footnotesize$\Raa^1$]{\forall x A}
      \Infer1[\footnotesize$\forall_\Elim$]{A\Sub{t}{x}}
      \Ellipsis{$\pi$}{}
    \end{prooftree}
    \qquad
    &\leadsto
    \qquad
    \begin{prooftree}
      \Hypo{\ulcorner \lnot A\Sub{t}{x} \urcorner^2}
        \Hypo{\ulcorner \forall x A \urcorner^1}
      \Infer1[\footnotesize$\forall_\Elim$]{A\Sub{t}{x}}
      \Infer2[\footnotesize$\lnot_\Elim$]{\bot}
      \Infer1[\footnotesize$\lnot_\Intro^1$]{\lnot \forall x A}
      \Ellipsis{$\pi'$}{\bot}
      \Infer1[\footnotesize$\Raa^2$]{A\Sub{t}{x}}
      \Ellipsis{$\pi$}{}
    \end{prooftree}
    = \Pi'
  \end{align}
  }
  
  \item[$\Efq$ and $\Raa$:]
  \begin{subequations}
  {\small
  \begin{align}
    \Pi = 
    \begin{prooftree}\label{eq:eqf}
      \Hypo{\ulcorner \lnot \bot \urcorner^1}
      \Ellipsis{$\pi'$}{\bot}
      \Infer1[\footnotesize$\Raa^1$]{\; \bot \;}
      \Infer1[\footnotesize$\Efq$]{B}
      \Ellipsis{$\pi$}{}
    \end{prooftree}
    \qquad
    &\leadsto
    \qquad
    \begin{prooftree}
      \Hypo{\ulcorner \bot \urcorner^1}
      \Infer1[\footnotesize$\lnot_\Intro^1$]{\lnot \bot}
      \Ellipsis{$\pi'$}{\; \bot \;}
      \Infer1[\footnotesize$\Efq$]{B}
      \Ellipsis{$\pi$}{}
    \end{prooftree}
    = \Pi'
    \\ \notag\\
    \Pi = 
    \begin{prooftree}\label{eq:raa}
      \Hypo{\ulcorner \lnot B \urcorner^2, \ulcorner \lnot \bot \urcorner^1}
      \Ellipsis{$\pi'$}{\bot}
      \Infer1[\footnotesize$\Raa^1$]{\; \bot \;}
      \Infer1[\footnotesize$\Raa^2$]{B}
      \Ellipsis{$\pi$}{}
    \end{prooftree}
    \qquad
    &\leadsto
    \qquad
    \begin{prooftree}
      \Hypo{\ulcorner \lnot B \urcorner^2,}
	\Hypo{\ulcorner \bot \urcorner^1}
      \Infer1[\footnotesize$\lnot_\Intro^1$]{\lnot \bot}
      \Infer[no rule,separation=0.2em]2{}
      \Ellipsis{$\pi'$}{\; \bot \;}
      \Infer1[\footnotesize$\Raa^2$]{B}
      \Ellipsis{$\pi$}{}
    \end{prooftree}
    = \Pi'
  \end{align}
  }
  \end{subequations}

\end{description}

\begin{note}
  For all derivations $\Pi$ and $\Pi'$ in $\Nk$, we write $\Pi \leadsto \Pi'$ if $\Pi'$ is obtained from $\Pi$ by applying one of the reduction steps listed above.
  The reflexive-transitive closure of %the binary relation 
  $\leadsto$ is denoted by $\leadsto^*$.
  
%   For every reduction rule listed above except the last one
  Given $\Pi \leadsto \Pi'$, we say that each instance of $\Raa$ in $\Pi$ that is explicitly represented in the left-hand side of any reduction step listed above\,---\,with the exception of the reduction step \eqref{eq:raa}\,---\,is \textit{active}.
  Concerning the reduction step \eqref{eq:raa} (\Ie~the $\Raa$ case), if $\mathsf{r}_1$ and $\mathsf{r}_2$ are the two instances of $\Raa$ in $\Pi$ that are explicitly represented in the left-hand side of the reduction step, and if $\mathsf{r}_1$ (resp.~$\mathsf{r}_2$) is the instance whose conclusion is $\bot$ (resp.~$B$), then~only~$\mathsf{r}_1$~is~\emph{active}.
  
  If $\Pi \leadsto \Pi'$ and $\mathsf{r}_1, \dots, \mathsf{r}_n$ are the active instances of $\Raa$ in $\Pi$, we will write $\Pi \overset{\mathsf{r}_1, \dots, \mathsf{r}_n}{\leadsto} \Pi'$. According to the reduction rules listed above, $n \in \{1, 2, 3\}$.
\end{note}

It is easy to check that, for each reduction step listed above, there is at least one active instance of the rule $\Raa$ in $\Pi$.
% It is worth noting that the reduction rules seen above involves only ``local'' modifications on derivations, in the sense that, when %passing from $\Pi$ to $\Pi'$
% $\Pi \leadsto \Pi'$, no sub-derivation of $\Pi$ are erased or duplicated in $\Pi'$, in particular the non-discharged assumptions and the conclusion are preserved. 
% Nevertheless some sub-derivations of $\Pi$ can be moved in $\Pi'$ above some other sub-derivations of $\Pi$ (this corresponds to an operation of proof composition). %FALSE! See example before the lemma of size decreasing.
These reduction steps might involve some non-local modifications over derivations. For example, when $\Pi \leadsto \Pi'$, a subderivation of $\Pi$ might be erased or duplicated in $\Pi'$, depending on the number of assumptions that are discharged by the active instances of $\Raa$ in $\Pi \leadsto \Pi'$. Moreover, some subderivations of $\Pi$ can be moved in $\Pi'$ above some other subderivations of $\Pi$ (this corresponds to an operation of proof composition).
 
When %passing from $\Pi$ to $\Pi'$
$\Pi \leadsto \Pi'$\!, there is no reduction step which introduces in $\Pi'$ a new discharging instance of $\Raa$: any discharging instance of $\Raa$ in $\Pi'$\! can thus be seen as a ``residual'' of an instance of $\Raa$ in $\Pi$ (possibly non-discharging or applied to another formula).
However, it is not true that any instance of $\Raa$ in $\Pi$ has a %``
residual in $\Pi'$; for example in the reduction steps %for the $\lnot_\Elim$ case or $\Efq$ case.
\eqref{eq:notelim}, \eqref{eq:eqf} or \eqref{eq:raa} the active instance of $\Raa$ in $\Pi$ vanishes in $\Pi'$.

\begin{rmk}\label{rmk:tointro}
  The case where $\Pi \leadsto \Pi'$ by applying the reduction step \eqref{eq:tointro} (\Ie~the $\to_\Intro$ case) is the only one introducing in $\Pi'$ a new instance of $\Efq$: 
  the instance of $\Efq$ in $\Pi'$ explicitly represented in the right-hand side of the reduction step %concerning the $\to_\Intro$ case 
  \eqref{eq:tointro} is not a %``
  residual %'' 
  of any instance of $\Efq$ in $\Pi$.
%   Therefore, in general, the fact that $\Pi \leadsto \Pi'$ where $\Pi$ is a derivation in $\Nm$ does not imply that $\Pi'$ is a derivation in $\Nm$.%Mainingless because if $\Pi \leadsto \Pi'$ then $\Pi$ is not a derivation in $\Nm$.
  In a way, it is impossible to avoid adding an instance of $\Efq$ in the $\to_\Intro$ case: this is deeply related to the fact that $(A \to \lnot \lnot B) \to \lnot \lnot(A \to B)$ is provable in $\Nj$ but not in $\Nm$.
  Indeed, if it were possible to define the following reduction step (where in the subderivations $\pi$ and $\pi'$ there is no instance of $\Raa$, and the formulas occurring in the non-discharged assumptions of $\Pi'$ are a subset the formulas occurring the non-discharged assumptions of $\Pi$)
  
  {\small
  \begin{align*}%\label{eq:tointro}
    \Pi = 
    \begin{prooftree}
      \Hypo{\ulcorner A \urcorner^2, \ulcorner \lnot B \urcorner^1}
      \Ellipsis{$\pi$}{\; \bot \;}
      \Infer1[\footnotesize$\Raa^1$]{B}
      \Infer1[\footnotesize$\to_\Intro^2$]{A \to B}
%       \Ellipsis{$\pi$}{}
    \end{prooftree}
      \qquad
      &\leadsto
      \qquad
    \begin{prooftree}
      \Hypo{\ulcorner \lnot (A \to B) \urcorner^2}
	\Hypo{\ulcorner A \urcorner^1}
% 	  \Hypo{\ulcorner \lnot (A \to B) \urcorner^3}
% 	    \Hypo{\ulcorner B \urcorner^1}
% 	    \Infer1[\footnotesize$\to_\Intro^0$]{A \to B}
% 	  \Infer2[\footnotesize$\lnot_\Elim$]{\bot}
% 	\Infer1[\footnotesize$\lnot_\Intro^1$]{\lnot B}
% 	\Infer[no rule,separation=0.0em]2{}
	\Ellipsis{$\pi'$}%{\; \bot \;}
% 	\Infer1[\footnotesize$\Efq$]
	{B}
	\Infer1[\footnotesize$\to_\Intro^1$]{A \to B}
      \Infer2[\footnotesize$\lnot_\Elim$]{\bot}
      \Infer1[\footnotesize$\Raa^2$]{A \to B}
%       \Ellipsis{$\pi$}{}
    \end{prooftree}
    = \Pi'
  \end{align*}
  }
  
  \noindent then, by replacing the instances of $\Raa$ in $\Pi$ and $\Pi'$ by instances of $\lnot_\Intro$, the conclusions of $\Pi$ and $\Pi'$ would be $A \to \lnot \lnot B$ and $\lnot \lnot (A \to B)$, respectively: this would mean that the derivability of $A \to \lnot \lnot B$ in $\Nm$ would imply the derivability of $\lnot \lnot (A \to B)$ in $\Nm$, which is impossible.
\end{rmk}

\begin{rmk}\label{rmk:forall}
  It is easy to check that if $\Pi$ is a derivation in $\Nk$ and $\Pi \leadsto \Pi'$, then $\Pi'$ is a derivation in $\Nk$ and $\Pi$ is not a derivation in $\Nm$ ($\Pi$ contains at least one instance of $\Raa$, discharging or not discharging).
  In the reduction steps listed above, the $\forall_\Intro$ case (where in $\Pi$ an instance of the rule $\forall_\Intro$ is immediately below the %focused 
  instance of $\Raa$ under focus) is absent, otherwise $\Pi'$ could not be a derivation in $\Nk$, as we have pointed out in \S\ref{subsect:seldinstrategy}. 
  In other words, if $\Pi \overset{\mathsf{r}}{\leadsto} \Pi'$ then $\mathsf{r}$ is not an instance of $\Raa$ in $\Pi$ whose conclusion is the premise of an instance of $\forall_\Intro$. 
%   Indeed, the natural way to treat the $\forall_\Intro$ case would be the following reduction step:
%   {\small
%   \begin{align*}
%     \Pi = 
%     \begin{prooftree}
%       \Hypo{\ulcorner \lnot A \urcorner^1}
%       \Ellipsis{$\pi'$}{\; \bot \;}
%       \Infer1[\small$\Raa^1$]{A}
%       \Infer1[\small$\forall_\Intro$]{\forall x A}
%       \Ellipsis{$\pi$}{}
%     \end{prooftree}
%     \qquad
%     &\leadsto
%     \qquad
%     \begin{prooftree}
%       \Hypo{\ulcorner \lnot \forall x A \urcorner^2}
% 	\Hypo{\ulcorner A \urcorner^1}
%       \Infer1[\small$\forall_\Intro$]{\forall x A}
%       \Infer2[\small$\lnot_\Elim$]{\bot}
%       \Infer1[\small$\lnot_i^1$]{\lnot A}
%       \Ellipsis{$\pi'$}{\; \bot \;}
%       \Infer1[\small$\Raa^2$]{\forall x A}
%       \Ellipsis{$\pi$}{}
% %       \Alter{\color{blue}}
%     \end{prooftree}
%     = \Pi'
%   \end{align*}
%   }%
%   but $\Pi'$ is not a derivation in $\Nk$ (nor in other subsystems of $\Nk$) because in $\Pi'$ the rule $\forall_\Intro$ is not correctly instantiated, indeed the variable $x$ may occur free in $A$ and $A$ is a non-discharged assumption when the rule $\forall_\Intro$ is applied in $\Pi'$.
% %   We think that there is no way to treat the $\forall_\Intro$ case.
%   There is no (reasonable) way to treat the $\forall_\Intro$ case without adding any rule of some intermediate logic (see also \cite[pp.~639-640]{Seldin86}).
\end{rmk}

% In the cases where the two premises of an instance of %the rule 
% $\lnot_\Elim$ or $\to_\Elim$ are both the conclusions of two instances of %the rule 
% $\Raa$, the reduction rules
\begin{rmk}\label{rmk:non-conf}
The reduction steps \eqref{eq:notelim}, \eqref{eq:andelim} and \eqref{eq:toelim} possess a certain degree of arbitrariness since they could also be defined %in such a way that, 
so that, in $\Pi'$\!, the subderivation $\pi'$ would be put above %the sub-derivation 
$\pi''$\!, and not vice-versa.
%one might define them in such a way to put the sub-derivation $\pi'$ above the sub-derivation $\pi''$ and not vice-versa.
\end{rmk}

Observe the similarities between the reduction steps in the $\land_\Intro$ and $\to_\Elim$ cases, or in the $\land_\Elim$, $\lor_\Intro$ and $\forall_\Elim$ cases, or in the $\lor_\Elim$ and $\exists_\Elim$ cases.
In contrast, the reduction steps for the $\lnot_\Intro$ and $\to_\Intro$ cases (resp.\ $\lnot_\Elim$ and $\to_\Elim$ cases) are rather different: 
the former -- \eqref{eq:notintro} (resp.~\eqref{eq:notelimgeneral}) -- erases an instance of $\Raa$, whereas the latter -- \eqref{eq:tointro} (resp.~\eqref{eq:toelimgeneral}) -- postpones an instance of $\Raa$ after an instance of $\to_\Intro$ 
(resp.~$\to_\Elim$), moreover in the $\to_\Intro$ case the reduction step introduces a new instance of $\Efq$, unlike the $\lnot_\Intro$ case. 
% This is a consequence of the fact that $\lnot A$ is not treated here as an abbreviation for $A \to \bot$.
This differing behavior justifies our choice to consider $\lnot$ as primitive and not to treat $\lnot A$ as a shorthand for~$A \to \bot$.

%\end{document} %4

%\documentclass[main.tex]{subfiles} %Section 5
% 
%\begin{document}
\section{\texorpdfstring{Postponement of $\Raa$}{Postponement of raa}}
\label{sect:postponement}

In this section we prove the first main result of this paper: the postponement of $\Raa$ (Theorem~\ref{thm:postponement}, Corollary~\ref{cor:postponement}), \Ie~the fact that it is possible to transform a derivation in $\Nk$ in such a way that any possible instance of $\Raa$ is pushed downward until it vanishes or it occurs only in the last rule, preserving the same conclusion and without adding any new non-discharged assumptions. 
More precisely, we show that, by repeated applications of the reduction steps of \S\ref{sect:rewrite} following a suitable strategy%, we have that
:
\begin{itemize}
  \item a derivation $\pi$ in $\Nk$ without the rule $\forall_\Intro$ reduces all its instances of $\Raa$ to at most one instance of $\Raa$ occurring as the last rule, the rest of the derivation being in $\Nj$ (Theorem~\ref{thm:postponement}.\ref{thm:postponement.intuitionistic});

  \item a derivation $\pi$ in $\Nk$ without the rules $\forall_\Intro$ and $\to_\Intro$ reduces all its instances of $\Raa$ to at most one instance of $\Raa$ occurring as the last rule, the rest of the derivation being in $\Nm$ (Theorem~\ref{thm:postponement}.\ref{thm:postponement.minimal}).
\end{itemize}

\noindent This result will be then reformulated by focusing on the form of the conclusion and of the non-discharged assumptions of $\pi$, rather than on the kind of inference rules used in $\pi$ (Corollary~\ref{cor:postponement}).

%Corollary~\ref{cor:postponement} reformulates Theorem~\ref{thm:postponement}: %with a more suitable hypothesis.
%ts hypothesis is focused on the form of the conclusion and the non-discharged assumptions of $\pi$, rather than on the kind of inference rules used in $\pi$.

Two lemmas are used in %order to prove the main result about the postponement of $\Raa$, that is, the 
the proof of Theorem~\ref{thm:postponement}. 
%The next two lemmas will be used to prove Theorem~\ref{thm:postponement}.
The first one (Lemma~\ref{lemma:preservation}) says that when the reduction steps of \S\ref{sect:rewrite} are applied, no new non-discharged assumptions are added in the reduced derivation, and the conclusion of the original derivation is preserved. Moreover, the reduction steps neither introduce nor erase any instance of the rules $\to_\Intro$ and $\forall_\Intro$.
The second one (Lemma~\ref{lemma:size}) says that 
%when one applies one of the reduction rules of Section~\ref{sect:rewrite} having as active instance of $\Raa$ a $\RAA$-maximal (resp.~$\RAAp$-maximal) one, the $\Size_\RAA$ (resp.~$\Size_\RAAp$) of the derivation strictly decreases.
% the application of a reduction rule having as active instance of $\Raa$ a $\RAA$-maximal (resp.~$\RAAp$-maximal) one, entails that the $\Size_\RAA$ (resp.~$\Size_\RAAp$) of the derivation strictly decreases.
the $\Size_\RAA$ (resp.~$\Size_\RAAp$) of a derivation strictly decreases when one applies a reduction rule whose active instance of $\Raa$ is a $\RAA$-maximal (resp.~$\RAAp$-maximal) instance.

\begin{lemma}[Preservations]\label{lemma:preservation}
  Let $\Pi$ and $\Pi'$ be derivations in $\Nk$ with $\Pi \leadsto \Pi'$. %such that $\Pi \leadsto \Pi'$. 
%   One has:
  \begin{enumerate}
    \item\label{lemma:preservation.subject} If $\Pi \colon \Gamma \vdash A$, then $\Pi' \colon \Gamma \vdash A$;
    
    \item\label{lemma:preservation.connective} If $\Pi$ has no %$\lnot$-free 
    instance of the rule $\to_\Intro$ (resp.~%no instance of the rule 
    $\forall_\Intro$), %rules $\land_\Intro$, $\land_\ElimOne$, $\land_\ElimTwo$, $\lor_\IntroOne$, $\lor_\IntroTwo$, $\lor_\Elim$, 
    then $\Pi'$ has no %$\lnot$-free 
    instance of the rule $\to_\Intro$ (resp.~%no instance of the rule 
    $\forall_\Intro$).
%    TODO: Is the converse true?
  \end{enumerate}
\end{lemma}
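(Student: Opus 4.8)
The plan is to prove both items simultaneously, by a case analysis on which of the reduction steps of \S\ref{sect:rewrite} witnesses $\Pi \leadsto \Pi'$. Every such step is \emph{local}: inside $\Pi$ it singles out a subderivation $\pi$ — everything strictly below the focused instance(s) of $\Raa$ — sitting under a ``cap'' $\kappa$ (the pattern explicitly drawn on the left-hand side), and it replaces $\kappa$ by a new cap $\kappa'$ (the right-hand side), leaving $\pi$ untouched. Since the conclusion of $\Pi$ is the conclusion of $\pi$, item~\ref{lemma:preservation.subject} reduces, step by step, to two superficially trivial checks: (i) $\kappa$ and $\kappa'$ have the same conclusion (the formula that $\kappa$ feeds to $\pi$), and (ii) the set of formulas occurring undischarged in $\kappa'$ is contained in the one for $\kappa$; likewise item~\ref{lemma:preservation.connective} reduces to checking that $\kappa'$ contains no instance of $\to_\Intro$ (resp.\ $\forall_\Intro$) beyond those already present in the subderivations $\pi', \pi'', \dots$ inherited from $\kappa$. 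Check (i) is immediate from the schemes: in each pair the formula above $\pi$ is visibly the same on both sides (e.g.\ $\lnot A$ in \eqref{eq:notintro}, $A \land B$ in \eqref{eq:andelim}, $A \to B$ in \eqref{eq:tointro}, $C$ in the $\lor_\Elim$ and $\exists_\Elim$ cases).

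For (ii) I would first record an elementary substitution fact — grafting a derivation of $B$ in place of the undischarged occurrences of an assumption $B$ in another derivation produces a derivation whose undischarged assumptions are contained in the union of those of the two pieces — and then note that the schemes of \S\ref{sect:rewrite} are genuinely \emph{schematic}: an active $\Raa$ may discharge zero, one, or several occurrences of its negated formula, so a side subderivation $\pi', \pi'', \dots$ may be erased or replicated when it is grafted into the new cap. Replication is harmless, since undischarged assumptions form a \emph{set}; erasure only shrinks it. Granting this, the verification is pure bookkeeping: in each scheme the formulas that were undischarged assumptions of a (copy of a) grafted $\pi', \pi'', \dots$ — the $A$, $\lnot A$, $\lnot C$, $\lnot(A\land B)$, $\lnot(A\lor B)$, etc.\ exhibited in the caps — are, in $\kappa'$, either supplied from above by the newly built subderivation, or discharged by one of the surrounding instances of $\lnot_\Intro$, $\to_\Intro$, $\lor_\Elim$, $\exists_\Elim$ or $\Raa$ of $\kappa'$. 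The sub-cases that deserve attention are those in which subderivations get re-ordered or duplicated — \eqref{eq:tointro}, the reordering steps \eqref{eq:notelim}, \eqref{eq:andelim}, \eqref{eq:toelim}, and all the multiple-$\Raa$ sub-cases of the $\lor_\Elim$ and $\exists_\Elim$ rules — but in each of them the count comes out so that the undischarged assumptions of $\kappa'$ in fact coincide with those of $\kappa$; the erasing steps \eqref{eq:eqf}, \eqref{eq:raa} and \eqref{eq:notelimgeneral} only remove assumptions. This yields item~\ref{lemma:preservation.subject}.

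Item~\ref{lemma:preservation.connective} is shorter. Inspecting the right-hand sides, the only scheme whose cap $\kappa'$ contains an instance of $\to_\Intro$ not inherited from a reused subderivation is \eqref{eq:tointro} (it inserts an auxiliary $\to_\Intro$ together with an $\Efq$); but the left-hand cap of \eqref{eq:tointro} already contains an instance of $\to_\Intro$, so this step cannot be the one applied when $\Pi$ has no instance of $\to_\Intro$. No scheme ever creates an instance of $\forall_\Intro$. Hence every $\to_\Intro$- (resp.\ $\forall_\Intro$-) instance of $\Pi'$ is a residual of such an instance of $\Pi$; in particular if $\Pi$ has none, neither does $\Pi'$.

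The whole argument is routine but long, since \S\ref{sect:rewrite} lists on the order of twenty reduction schemes, several of which — the $\lor_\Elim$ and $\exists_\Elim$ cases — split into many sub-cases; I expect this enumeration to be the only real effort. The one conceptually delicate point is that re-ordering and grafting subderivations must respect the eigenvariable side-conditions of $\forall_\Intro$ and $\exists_\Elim$; but for the $\exists_\Elim$ schemes this is exactly the freeness requirement already isolated in \S\ref{sect:rewrite} (``$x$ does not occur free in $C$''), and for $\forall_\Intro$ it is in any case subsumed by the standing hypothesis of the statement that $\Pi'$ is already a derivation in $\Nk$ — so it plays no further role in proving the two claims above.
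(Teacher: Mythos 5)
Your proof is correct and is essentially the paper's own argument: the paper disposes of Lemma~\ref{lemma:preservation} with the single line ``by straightforward inspection of the reduction steps listed in \S\ref{sect:rewrite}'', and your case analysis (same conclusion of the rewritten cap, new assumptions all discharged by the surrounding $\lnot_\Intro$/$\Raa$/$\lor_\Elim$/$\exists_\Elim$ instances, duplication and erasure harmless because undischarged assumptions form a set, and only \eqref{eq:tointro} creating a fresh $\to_\Intro$ while no step creates a $\forall_\Intro$) is exactly what that inspection amounts to. Your observation that the containment of assumption sets may be proper is also consistent with the paper, which makes the same point via the counterexample \eqref{eq:counterexample} immediately after the lemma.
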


\begin{proof}
  By straightforward inspection of the reduction steps listed in \S\ref{sect:rewrite}. %of all the reduction steps listed in \S\ref{sect:rewrite}.
\end{proof}

The converse of Lemma~\ref{lemma:preservation}.\ref{lemma:preservation.subject} does not hold, as shown by the following counterexample (taking $\Gamma = \{\neg P, P\}$ and $A = Q \wedge R$, where $P$, $Q$ and $R$ are distinct proposition symbols):
\begin{align}\label{eq:counterexample}
  \Pi = 
  \begin{prooftree}
    \Hypo{\lnot \Var}
    \Hypo{\Var}
    \Infer2[\footnotesize$\lnot_\Elim$]{\; \bot \;}
    \Infer1[\footnotesize$\Efq$]{\Vartwo}
    \Hypo{\Varthree}
    \Infer2[\footnotesize$\land_\Intro$]{\Vartwo \land \Varthree}
  \end{prooftree}
  \qquad
  &\leadsto
  \qquad
  \begin{prooftree}
    \Hypo{ \lnot \Var }
    \Hypo{\Var}
    \Infer2[\footnotesize$\lnot_\Elim$]{\bot}
    \Infer1[\footnotesize$\Efq$]{\Vartwo \land \Varthree}
  \end{prooftree}
  = \Pi'
\end{align}

\noindent Hence, in \eqref{eq:counterexample} we have $\Pi' : \Gamma \vdash A$ but we do not have $\Pi : \Gamma \vdash A$, since the set of non-discharged assumptions of $\Pi$ also contains (an occurrence of) the formula $R$ (see point \ref{note:assumption} about notations at p.~\pageref{note:assumption}). 
%so, (the only occurrence of) $\Varthree$ in $\Pi$ is a non-discharged assumption of $\Pi$, but $\Varthree$ is not an assumption of $\Pi'$.
In other words, Lemma~\ref{lemma:preservation}.\ref{lemma:preservation.subject} says that if $\Pi \leadsto \Pi'$ then the formulas occurring among the non-discharged assumptions of $\Pi'$ are a \emph{subset} of the formulas occurring among the non-discharged assumptions of $\Pi$.
  
Derivability in $\Nj$ is not preserved by the reduction steps of \S\ref{sect:rewrite}: the fact that $\Pi$ is a derivation in $\Nj$ and $\Pi \leadsto \Pi'$ does not always imply that $\Pi'$ is a derivation in $\Nj$, because an instance of the rule $\Efq$ in $\Pi$ can be transformed into a discharging instance of the rule $\Raa$ in $\Pi'$. Consider, for example, the following situation:

{\small
\begin{equation*}
  \Pi = 
  \begin{prooftree}[label separation = 0.2em]
    \Hypo{\lnot \Var \lor \Vartwo}
      \Hypo{\ulcorner \lnot \Var \urcorner^1}
      \Hypo{\Var}
      \Infer[separation = 0.8em]2[\footnotesize$\lnot_{\Elim}$]{\; \bot \;}
    \Infer1[\footnotesize$\Efq$]{\Vartwo}
    \Hypo{\ulcorner \Vartwo \urcorner^1}
    \Infer[separation = -0.5em]3[\footnotesize$\lor_\Elim^1$]{\Vartwo}
  \end{prooftree}
  \ \leadsto \quad\!
  \begin{prooftree}[label separation = 0.2em]
    \Hypo{\lnot \Var \lor \Vartwo}
      \Hypo{\ulcorner \lnot \Var \urcorner^1}
      \Hypo{\Var}
    \Infer[separation = 0.8em]2[\footnotesize$\lnot_{\Elim}$]{\bot}
      \Hypo{\ulcorner \lnot \Vartwo \urcorner^2}
      \Hypo{\ulcorner \Vartwo \urcorner^1}
    \Infer[separation = 0.8em]2[\footnotesize$\lnot_{\Elim}$]{\bot}
    \Infer[separation = 0.5em]3[\footnotesize$\lor_\Elim^1$]{\; \bot \;}
    \Infer1[\footnotesize$\Raa^2$]{\Vartwo}
  \end{prooftree}
  \!= \Pi'.
\end{equation*}
}

\begin{lemma}[Size decreasing]\label{lemma:size}
  Let $\Pi$ and $\Pi'$ be derivations in $\Nk$ such that $\Pi \overset{\mathsf{r}_1, \dots, \mathsf{r}_n}{\leadsto} \Pi'$ where $n \in \Natp$. 
  \begin{enumerate}
    \item\label{lemma:size.maximal} If $\mathsf{r}_i$ is $\RAA_{\Pi}$-maximal (resp.~$\RAAp_{\Pi}$-maximal) for some $1 \leq i \leq n$, then $\mathsf{r}_j$ is $\RAA_{\Pi}$-maximal (resp.~$\RAAp_{\Pi}$-maximal) for all $1 \leq j \leq n$.
  
    \item\label{lemma:size.intuitionistic} If $\Pi$ contains no instance of the rule $\forall_\Intro$, and if $\mathsf{r}_j$ is $\RAAp_{\Pi}$-maximal for some $1 \leq j \leq n$, then $\Size_{\RAAp}(\Pi') < \Size_{\RAAp}(\Pi)$.

    \item\label{lemma:size.minimal} If $\Pi$ contains no %$\lnot$-free 
    instance of the rules $\to_\Intro $ and 
    %no instance of the rule 
    $ \forall_\Intro$, and if $\mathsf{r}_j$ is $\RAA_{\Pi}$-maximal for some $1 \leq j \leq n$, then $\Size_{\RAA}(\Pi') < \Size_{\RAA}(\Pi)$.
  \end{enumerate}

\end{lemma}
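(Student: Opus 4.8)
The plan is to establish part~\ref{lemma:size.maximal} first and then to derive parts~\ref{lemma:size.intuitionistic} and~\ref{lemma:size.minimal} together, by a case analysis on the reduction step applied, organized around one common pattern. For part~\ref{lemma:size.maximal}, the point is that in \emph{every} reduction step of \S\ref{sect:rewrite} the conclusions of the active instances $\mathsf{r}_1, \dots, \mathsf{r}_n$ of $\Raa$ are premisses of one and the same instance $\mathsf{s}$ of a rule, namely the rule displayed immediately below them on the left-hand side of the step (inspect the cases with $n \geq 2$: $\lnot_\Elim$, $\land_\Intro$, $\to_\Elim$, $\lor_\Elim$, $\exists_\Elim$). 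Since a derivation is a tree, the thread from the conclusion of $\mathsf{r}_i$ to the conclusion of $\Pi$ prefixes the edge from $\mathsf{r}_i$'s conclusion to $\mathsf{s}$'s conclusion to the thread from $\mathsf{s}$'s conclusion to $\Pi$'s conclusion; hence $\Dist{\Pi}(\mathsf{r}_i) = 1 + \Dist{\Pi}(\mathsf{s})$ for every $i$, so the $\mathsf{r}_i$ are equidistant in $\Pi$. As they all belong to $\RAA_{\Pi}$, $\RAA_{\Pi}$-maximality of one of them is equivalent to $\RAA_{\Pi}$-maximality of each of them, and the same holds for $\RAAp_{\Pi}$-maximality among the $\mathsf{r}_i$ that lie in $\RAAp_{\Pi}$.

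For parts~\ref{lemma:size.intuitionistic} and~\ref{lemma:size.minimal}, write $d$ for the common distance of the $\mathsf{r}_i$ (so $d \geq 1$, as no $\mathsf{r}_i$ is the last rule of $\Pi$) and $\mathsf{s}$ for the rule below them, and record three facts to be verified step by step. \emph{(i)} Each active $\mathsf{r}_i$ disappears in $\Pi'$ (its place is taken by a $\lnot_\Intro$), and the instances of $\Raa$ that $\Pi'$ contains but $\Pi$ does not amount to: at most one new instance, at the foot of the rewritten block, whose conclusion coincides with that of $\mathsf{s}$ and which sits over the part of $\Pi$ below $\mathsf{s}$ left untouched, hence at distance $\Dist{\Pi}(\mathsf{s}) = d-1$ in $\Pi'$; and, only in the $\to_\Intro$ case~\eqref{eq:tointro}, one new instance of $\Efq$ (this is Remark~\ref{rmk:tointro}); the $\forall_\Intro$ case does not occur (Remark~\ref{rmk:forall}). \emph{(ii)} If some $\mathsf{r}_i$ is $\RAAp_{\Pi}$-maximal (resp.\ $\RAA_{\Pi}$-maximal), then the subderivation feeding the premiss of any $\mathsf{r}_j$, and any sibling subderivation of the $\mathsf{r}_j$ under $\mathsf{s}$, contains no discharging instance of $\Raa$ (resp.\ no instance of $\Raa$): in the former such an instance would lie at distance $\geq d+1$ in $\Pi$, and in the latter at distance $\geq d+1$ as well, using the side conditions of the reduction steps forbidding the last rule of a sibling subderivation to be an instance of $\Raa$ --- in either case contradicting the assumed maximality. \emph{(iii)} The part of $\Pi$ below $\mathsf{s}$ is relocated rigidly, so the distances of its instances of $\Raa$ are unchanged; and although the subderivations above the $\mathsf{r}_j$ may be erased, duplicated, or moved in $\Pi'$, by \emph{(ii)} they are free of (discharging) instances of $\Raa$ and hence contribute $0$ to $\Size_{\RAAp}$ (resp.\ $\Size_{\RAA}$) in $\Pi$ and in $\Pi'$ alike.

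Putting this together, in the situation of part~\ref{lemma:size.intuitionistic} the $\RAAp_{\Pi}$-maximal $\mathsf{r}_i$ is a discharging instance and has distance $d$, so the active instances contribute $\geq d$ to $\Size_{\RAAp}(\Pi)$, all of which is lost in $\Pi'$; the only possible gain counting for $\Size_{\RAAp}$ is the single new bottom instance at distance $d-1$ (the new $\Efq$ of~\eqref{eq:tointro}, if any, does not count, being non-discharging), so $\Size_{\RAAp}(\Pi') \leq \Size_{\RAAp}(\Pi) - d + (d-1) < \Size_{\RAAp}(\Pi)$. In the situation of part~\ref{lemma:size.minimal}, the hypothesis that $\Pi$ has no instance of $\to_\Intro$ excludes step~\eqref{eq:tointro}, so no new $\Efq$ --- hence no instance of $\Raa$ besides the single new bottom one --- is created; the active instances now contribute $nd \geq d$ to $\Size_{\RAA}(\Pi)$, and the same computation with $\Size_{\RAA}$ yields $\Size_{\RAA}(\Pi') \leq \Size_{\RAA}(\Pi) - d + (d-1) < \Size_{\RAA}(\Pi)$.

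The main obstacle I anticipate is carrying out fact~\emph{(iii)} faithfully across all the reduction steps of \S\ref{sect:rewrite}: several of them erase or duplicate entire subderivations depending on how many assumptions the active instances discharge, and one must match the instances of $\Raa$ of $\Pi$ with those of $\Pi'$ and recompute their distances. The conceptual content --- that maximality renders the material above the $\mathsf{r}_j$ size-irrelevant, and that an equidistant cluster of instances of $\Raa$ at distance $d$ is traded for at most one instance at distance $d-1$ --- is exactly what part~\ref{lemma:size.maximal} together with fact~\emph{(ii)} isolate.
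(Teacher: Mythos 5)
Your proof is correct and takes essentially the same route as the paper, whose own proof is a one-line appeal to ``straightforward inspection of the reduction steps'' with the remark that maximality is crucial because it ensures no instance of $\Raa$ is moved away from the conclusion when subderivations are relocated --- precisely your facts \emph{(ii)}--\emph{(iii)}, while your fact \emph{(i)} (an equidistant cluster at distance $d$ traded for at most one new bottom instance at distance $d-1$, plus the non-discharging $\Efq$ of \eqref{eq:tointro}) supplies the accounting the paper leaves implicit. The only minor looseness, inherited from the paper's own statement of part~\ref{lemma:size.maximal}, is that an active $\mathsf{r}_j$ which happens to be an $\Efq$ is not literally $\RAAp_{\Pi}$-maximal since it does not belong to $\RAAp_{\Pi}$; your hedge (``among the $\mathsf{r}_i$ that lie in $\RAAp_{\Pi}$'') is the correct reading and is all that the proof of Theorem~\ref{thm:postponement} actually needs.
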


\begin{proof}
  By straightforward inspection of all the reduction steps listed in \S\ref{sect:rewrite}.
  The hypothesis of maximality for the active instances of $\Raa$ is crucial: it ensures that when a subderivation is moved above another subderivation according to the reduction step, no instance of $\Raa$ is moved away from the conclusion of the derivation.
\end{proof}

Note that, in the proof of Lemma~\ref{lemma:size}, the complexity of the formulas occurring in the conclusions of the instances of $\Raa$ play no role (see Definition~\ref{def:distance}): the fact that the sizes $\Size_{\RAA}$ and $\Size_{\RAAp}$ decrease applying the reduction steps listed in \S\ref{sect:rewrite} is purely ``geometrical'', due to the decrease of the distance of a maximal $\Raa$ from the conclusion of the derivation.

Lemma~\ref{lemma:size}.\ref{lemma:size.minimal} becomes false if $\Pi$ contains an instance of the rule $\to_\Intro$, as shown by the following counterexample (see also Remark~\ref{rmk:tointro}):
{\small
\begin{align*}
  \Pi = 
  \begin{prooftree}
    \Hypo{\ulcorner \Var \urcorner^2}
    \Hypo{\ulcorner \lnot \Var \urcorner^1}
    \Infer2[\footnotesize$\lnot_\Elim$]{\; \bot \;}
    \Infer1[\footnotesize$\Raa^1$]{\Var}
    \Infer1[\footnotesize$\to_\Intro^2$]{\Var \to \Var}
  \end{prooftree}
    \quad
    &\leadsto
    \quad
  \begin{prooftree}
    \Hypo{\ulcorner \lnot (\Var \to \Var) \urcorner^3}
      \Hypo{\ulcorner \Var \urcorner^2}
	\Hypo{\ulcorner \lnot (\Var \to \Var) \urcorner^3}
	  \Hypo{\ulcorner \Var \urcorner^1}
	  \Infer1[\footnotesize$\to_\Intro^0$]{\Var \to \Var}
	\Infer2[\footnotesize$\lnot_\Elim$]{\bot}
      \Infer1[\footnotesize$\lnot_\Intro^1$]{\lnot \Var}
      \Infer[separation = -0.5em]2[\footnotesize$\lnot_\Elim$]{\; \bot \;}
      \Infer1[\footnotesize$\Efq$]{\Var}
      \Infer1[\footnotesize$\to_\Intro^2$]{\Var \to \Var}
    \Infer[separation = -0.5em]2[\footnotesize$\lnot_\Elim$]{\bot}
    \Infer1[\footnotesize$\Raa^3$]{\Var \to \Var}
  \end{prooftree}
  \!= \Pi'
\end{align*}
}
where $\Size_\RAA(\Pi) = 1 < 3 = \Size_\RAA(\Pi')$, since the instance of $\Efq$ in $\Pi'$ belongs to $\RAA_{\Pi'}$.
% But if $\Efq$ is not a particular case of $\Raa$ then Theorem~\ref{thm:postponement}.\ref{thm:postponement.intuitionistic} is false.

\begin{thm}[Postponement of $\Raa$, version 1]\label{thm:postponement}
  Let $\Pi \colon \Gamma \vdash A$ be a derivation in $\Nk \smallsetminus \{\forall_\Intro\}$. %without any instance of the rule $\forall_\Intro$.
  
  \begin{enumerate}
%     \item\label{thm:postponement.intuitionistic} There is a derivation $\Pi' \colon \Gamma \vdash_{\Nk \smallsetminus \{\forall_\Intro\}} A$ %in $\Nk \smallsetminus \{\forall_\Intro\}$ 
%     such that $\Pi \leadsto^* \Pi'$ and with at most one discharging instance of the rule $\Raa$% that is not an instance of the rule $\Efq$
%     ; this instance, if any, is the last rule of $\Pi'$, the rest of $\Pi'$ being a derivation in $\Nj$.
    \item\label{thm:postponement.intuitionistic} 
%     for some derivation $\Pi' \colon \Gamma \vdash A$ in $\Nk \smallsetminus \{\forall_\Intro\}$ 
%     with at most one discharging instance of the rule $\Raa$% that is not an instance of the rule $\Efq$
%     ; this instance, if any, is the last rule of $\Pi'$, the rest of $\Pi'$ being a derivation in $\Nj$.
    One has $\Pi \leadsto^* \Pi'$ for some \Jstandard\ derivation $\Pi' \colon \Gamma \vdash A$ in $\Nk \smallsetminus \{\forall_\Intro\}$. 
    % There exists some \Jstandard\ derivation $\Pi' \colon \Gamma \vdash A$ in $\Nk \smallsetminus \{\forall_\Intro\}$ such that $\Pi \leadsto^* \Pi'$.
    \item\label{thm:postponement.minimal} If $\Pi$ contains no %$\lnot$-free 
    instance of the rule $\to_\Intro$, then there exists 
%     for some derivation $\Pi' \colon \Gamma \vdash A$ in $\Nk \smallsetminus \{\to_\Intro, \forall_\Intro\}$ with at most one instance of the rule $\Raa$; this instance, if any, is the last rule of $\Pi'$, the rest of $\Pi'$ being a derivation in $\Nm$.
    some \Mstandard\ derivation $\Pi' \colon \Gamma \vdash A$ in $\Nk \smallsetminus \{\to_\Intro, \forall_\Intro\}$ such that $\Pi \leadsto^* \Pi'$.
  \end{enumerate}
\end{thm}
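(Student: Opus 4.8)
The strategy is to prove both items together by a single well-founded induction, using the two lemmas already at hand. For item~\ref{thm:postponement.intuitionistic}, I would induct on $\Size_{\RAAp}(\Pi)$; for item~\ref{thm:postponement.minimal}, on $\Size_{\RAA}(\Pi)$. The base cases are handled by Remark~\ref{rmk:size0}: if $\Size_{\RAAp}(\Pi) = 0$ then $\Pi$ is already \Jstandard\ and we take $\Pi' = \Pi$; likewise if $\Size_{\RAA}(\Pi) = 0$ then $\Pi$ is already \Mstandard. So in each case the base of the induction gives the desired conclusion with zero reduction steps.

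\textbf{Inductive step for \ref{thm:postponement.intuitionistic}.} Suppose $\Size_{\RAAp}(\Pi) > 0$, i.e.\ $\RAAp_\Pi \neq \emptyset$. Since $\Pi$ is a finite tree, there is a $\RAAp_\Pi$-maximal discharging instance $\mathsf{r}$ of $\Raa$ in $\Pi$; being discharging, $\mathsf{r}$ is not the last rule of $\Pi$ (otherwise $\Dist{\Pi}(\mathsf{r}) = 0$ and $\Pi$ would be \Jstandard), so there is a rule instance $\mathsf{s}$ immediately below $\mathsf{r}$. By Remark~\ref{rmk:forall}, since $\Pi$ has no $\forall_\Intro$, this $\mathsf{s}$ is not a $\forall_\Intro$ with $\mathsf{r}$ feeding its premise; hence one of the reduction steps of \S\ref{sect:rewrite} applies with $\mathsf{r}$ active. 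There is a subtlety here: several reduction steps have a side condition that the last rule of some subderivation ($\pi'$, $\pi''$, etc.) is not an instance of $\Raa$ — for instance the $\lnot_\Elim$, $\land_\Intro$, $\land_\Elim$, $\lor_\Elim$, $\to_\Elim$, $\exists_\Elim$ cases. When such a subderivation \emph{does} end in a $\Raa$, that $\Raa$ is necessarily also $\RAAp_\Pi$- or $\RAA_\Pi$-maximal (it sits above nothing relevant), and the ``double'' reduction steps (e.g.\ \eqref{eq:notelim}, \eqref{eq:andelim}, \eqref{eq:toelim}, and the various two- and three-premise $\lor_\Elim$ and $\exists_\Elim$ cases) are designed precisely to handle the situation where two or three maximal instances collide at $\mathsf{s}$; by Lemma~\ref{lemma:size}.\ref{lemma:size.maximal} all of them are then $\RAAp_\Pi$-maximal. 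So in every configuration some reduction step $\Pi \overset{\mathsf{r}_1,\dots,\mathsf{r}_n}{\leadsto} \Pi_1$ applies with all $\mathsf{r}_i$ being $\RAAp_\Pi$-maximal. By Lemma~\ref{lemma:preservation}.\ref{lemma:preservation.connective}, $\Pi_1$ is still in $\Nk \smallsetminus \{\forall_\Intro\}$; by Lemma~\ref{lemma:preservation}.\ref{lemma:preservation.subject}, $\Pi_1 \colon \Gamma \vdash A$; and by Lemma~\ref{lemma:size}.\ref{lemma:size.intuitionistic}, $\Size_{\RAAp}(\Pi_1) < \Size_{\RAAp}(\Pi)$. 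The induction hypothesis applied to $\Pi_1$ yields a \Jstandard\ derivation $\Pi' \colon \Gamma \vdash A$ in $\Nk \smallsetminus \{\forall_\Intro\}$ with $\Pi_1 \leadsto^* \Pi'$, whence $\Pi \leadsto^* \Pi'$.

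\textbf{Inductive step for \ref{thm:postponement.minimal}.} This is the same argument with $\Size_{\RAA}$ in place of $\Size_{\RAAp}$, using Lemma~\ref{lemma:size}.\ref{lemma:size.minimal} for the strict decrease and Lemma~\ref{lemma:preservation}.\ref{lemma:preservation.connective} to propagate the absence of both $\to_\Intro$ and $\forall_\Intro$. The extra hypothesis ``no $\to_\Intro$'' is exactly what makes Lemma~\ref{lemma:size}.\ref{lemma:size.minimal} available — recall the counterexample after that lemma, where the $\to_\Intro$ reduction step \eqref{eq:tointro} \emph{adds} an instance of $\Efq$ and so increases $\Size_\RAA$. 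Since $\Efq$ instances count towards $\RAA_{\Pi}$ but not $\RAAp_{\Pi}$, this is harmless for part~\ref{thm:postponement.intuitionistic} but fatal for part~\ref{thm:postponement.minimal}, which is why $\to_\Intro$ must be excluded there. When $\Size_{\RAA}(\Pi) > 0$, pick a $\RAA_\Pi$-maximal instance $\mathsf{r}$; as before it is not the last rule, some $\mathsf{s}$ sits below it, $\mathsf{s}$ is not a forbidden $\forall_\Intro$ by Remark~\ref{rmk:forall}, and (since there is no $\to_\Intro$ either) a reduction step applies with all active instances $\RAA_\Pi$-maximal by Lemma~\ref{lemma:size}.\ref{lemma:size.maximal}; then Lemma~\ref{lemma:size}.\ref{lemma:size.minimal} gives the strict decrease and the induction hypothesis finishes.

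\textbf{Main obstacle.} The delicate point is not the well-founded induction itself but the bookkeeping that guarantees a reduction step is \emph{always} available with a maximal active instance: one must check, case by case on the rule $\mathsf{s}$ immediately below a chosen maximal $\mathsf{r}$, that the appropriate member of the (rather long) list in \S\ref{sect:rewrite} applies — in particular that when a side condition ``the last rule of $\pi'$ is not $\Raa$'' fails, the companion multi-premise reduction step covers that case, and that all instances it activates are simultaneously maximal (this is Lemma~\ref{lemma:size}.\ref{lemma:size.maximal}). Once this case analysis is in place, the theorem follows; the reformulation in terms of the shapes of $\Gamma$ and $A$ will then be Corollary~\ref{cor:postponement}.
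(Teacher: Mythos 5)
Your proof is correct and follows essentially the same route as the paper: induction on $\Size_{\RAAp}$ (resp.\ $\Size_{\RAA}$), with the base case settled by Remark~\ref{rmk:size0}, the inductive step firing a reduction at a maximal instance of $\Raa$, and Lemmas~\ref{lemma:preservation} and~\ref{lemma:size} supplying preservation of $\Gamma \vdash A$, absence of the forbidden rules, and the strict size decrease. Your extra discussion of why some reduction step is always applicable (the side conditions and the multi-premise cases) is a correct elaboration of what the paper leaves implicit in the phrase ``there necessarily exists a $\Pi'$''.
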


\begin{proof}\hfill
  \begin{enumerate}
    \item By induction on $\Size_{\RAAp}(\Pi) \in \Nat$. 
    
    If $\Size_{\RAAp}(\Pi) = 0$, then just take $\Pi' = \Pi$, according to Remark~\ref{rmk:size0}.\ref{rmk:size0.intuitionistic}.
    
    Otherwise, $\Size_{\RAAp}(\Pi) > 0$ and there exists $\mathsf{r} \in \RAAp_\Pi$ which is $\RAAp_{\Pi}$-maximal and which is not the last rule of $\Pi$. 
    Since there is no instance of the rule $\forall_\Intro$ in $\Pi$, there necessarily exists a $\Pi'$ such that $\Pi \overset{\mathsf{r}_1, \dots, \mathsf{r}_n}{\leadsto} \Pi'$ where $n \in \Natp$ and $\mathsf{r} = \mathsf{r}_j$ for some $1 \leq j \leq n$.
    According to Lemma~\ref{lemma:size}.\ref{lemma:size.maximal}, all $\mathsf{r}_1, \dots, \mathsf{r_n}$ are $\RAAp_{\Pi}$-maximal.
    By Lemma~\ref{lemma:preservation}.\ref{lemma:preservation.subject}, $\Pi' \colon \Gamma \vdash A$, and, by Lemma~\ref{lemma:preservation}.\ref{lemma:preservation.connective}, $\Pi'$ has no instance of the rule $\forall_\Intro$.
    According to Lemma~\ref{lemma:size}.\ref{lemma:size.intuitionistic}, $\Size_{\RAAp}(\Pi') < \Size_{\RAAp}(\Pi)$. 
    Hence, by the induction hypothesis, there is a 
%     derivation $\Pi'' \colon \Gamma \vdash A$ in $\Nk \smallsetminus \{\forall_\Intro\}$ with at most one discharging instance of the rule $\Raa$ %that is not an instance of the rule $\Efq$ 
%     and this instance, if any, is the last rule of $\Pi''$, the rest of $\Pi''$ being a derivation in $\Nj$.
    \Jstandard\ derivation $\Pi'' \colon \Gamma \vdash A$ in $\Nk \smallsetminus \{\forall_\Intro\}$ such that $\Pi \leadsto \Pi' \leadsto^* \Pi''$.
    
    \item By induction on $\Size_{\RAA}(\Pi) \in \Nat$. 
    
    If $\Size_{\RAA}(\Pi) = 0$, then just take $\Pi' = \Pi$, according to Remark~\ref{rmk:size0}.\ref{rmk:size0.minimal}.
    
    Otherwise, $\Size_{\RAA}(\Pi) > 0$ and there exists $\mathsf{r} \in \RAA_\Pi$ which is $\RAA_{\Pi}$-maximal and which is not the last rule of $\Pi$. 
    Since there is no instance of the rule $\forall_\Intro$ in $\Pi$, there necessarily exists a $\Pi'$ such that $\Pi \overset{\mathsf{r}_1, \dots, \mathsf{r}_n}{\leadsto} \Pi'$, where $n \in \Natp$ and $\mathsf{r} = \mathsf{r}_j$ for some $1 \leq j \leq n$.
    According to Lemma~\ref{lemma:size}.\ref{lemma:size.maximal}, all $\mathsf{r}_1, \dots, \mathsf{r_n}$ are $\RAA_{\Pi}$-maximal.
    By Lemma~\ref{lemma:preservation}.\ref{lemma:preservation.subject}, $\Pi' \colon \Gamma \vdash A$, and, by Lemma~\ref{lemma:preservation}.\ref{lemma:preservation.connective}, $\Pi'$ has no instance of the rules $\forall_\Intro $ and %no $\lnot$-free instance of 
    $ \to_\Intro$.
    According to  Lemma~\ref{lemma:size}.\ref{lemma:size.minimal}, $\Size_\RAA(\Pi') < \Size_\RAA(\Pi)$. 
    Hence, by the induction hypothesis, there is a 
%     derivation $\Pi'' \colon \Gamma \vdash A$ in $\Nk \smallsetminus \{\to_\Intro, \forall_\Intro\}$ with at most one instance of the rule $\Raa$ and this instance, if any, is the last rule of $\Pi''$, the rest of $\Pi''$ being a derivation in $\Nm$.
    \Mstandard\ derivation $\Pi'' \colon \Gamma \vdash A$ in $\Nk \smallsetminus \{\to_\Intro, \forall_\Intro\}$ such that $\Pi \leadsto \Pi' \leadsto^* \Pi''$.
    \qedhere
  \end{enumerate}

% \vspace{-\baselineskip}
\end{proof}

%``geometric'' structure, no formulas.

% Theorem~\ref{thm:postponement} can be see as a \emph{weak} normalization theorem:\label{weaknorm} for every derivation $\Pi$ in $\Nk$ (fulfilling suitable conditions), we have shown that there exists a particular strategy for the application of the reduction steps of \S\ref{sect:rewrite} (we fire only maximal instances of $\Raa$) transforming $\Pi$ into a ``normal'' derivation in $\Nk$, where ``normal'' is here understood in the sense of the statement of Theorems~\ref{thm:postponement}.\ref{thm:postponement.intuitionistic}-\ref{thm:postponement.minimal}.
% We conjecture that Theorem~\ref{thm:postponement} can be strengthened in a \emph{strong} normalization theorem: whatever order in the application of the reduction rules of \S\ref{sect:rewrite} terminates into a ``normal'' derivation in $\Nk$.
% To prove that, one should refine the notion of size of a derivation and proceed by a more complex induction.
Theorem~\ref{thm:postponement} can be see as a \emph{weak} standardization theorem:\label{weaknorm} for every derivation $\Pi$ in $\Nk$ (fulfilling suitable conditions), we have shown that there exists a particular strategy for the application of the reduction steps of \S\ref{sect:rewrite} (we fire only maximal instances of $\Raa$) transforming $\Pi$ into a ``standard'' derivation in $\Nk$, where ``standard'' is here understood in the sense of Definition~\ref{def:distance}.
We conjecture that Theorem~\ref{thm:postponement} can be strengthened to a \emph{strong} standardization theorem: whatever strategy in the application of the reduction steps of \S\ref{sect:rewrite} terminates in a ``standard'' derivation in $\Nk$.
To prove that, one should refine the notion of size of a derivation and proceed by a more complex induction.

Thanks to the normalization theorem and the suitable subformula property for $\Nk$ proved by Stålmarck in \cite[pp.~130, 135]{Stalmarck91},\footnote{See also \cite[p.~208]{vonPlatoSiders12}.} we can reformulate Theorem~\ref{thm:postponement}.\ref{cor:postponement.intuitionistic} (resp.~Theorem~\ref{thm:postponement}.\ref{thm:postponement.minimal}) with a more satisfactory hypothesis: instead of supposing that the derivation $\Pi \colon \Gamma \vdash A$ in $\Nk$ is without any instance of the rule $\forall_\Intro$ (resp.~rules $\forall_\Intro$ and $\to_\Intro$), it is sufficient to suppose that $A$ and the formulas in $\Gamma$ do not contain any occurrence of $\forall$ (resp.~$\forall$ and $\to$). 
Note that, since in Stålmark’s normalization strategy for classical logic $\Raa$ is pushed downward only with respect to elimination rules, a proof in classical normal form in his sense can still contain instances of $\Raa$ that can reduced via the reduction steps proposed in \S\ref{sect:rewrite}. 
In this sense the following corollary of Theorem~\ref{thm:postponement} is not completely trivial.

%as stated in the following corollary of Theorem~\ref{thm:postponement}. 
% TODO: is it necessary that \forall (resp.~\to and \forall) do not occur in \Gamma? See a more precise version of subformula principle 

\begin{cor}[Postponement of $\Raa$, version 2]\label{cor:postponement}
%  Let $\Gamma \vdash A$ be derivable in $\Nk$.
  Suppose $\Gamma \vdash_\Nk A$.
  \begin{enumerate}
    \item\label{cor:postponement.intuitionistic} If $A$ and the formulas in $\Gamma$ do not contain any occurrence of $\forall$, then there exists a 
%     derivation $\Pi' \colon \Gamma \vdash A$ in $\Nk$ which contains at most one discharging instance of the rule $\Raa$% that is not an instance of the rule $\Efq$
%     ; this instance, if any, is the last rule of $\Pi'$, the rest of $\Pi'$ being a derivation in $\Nj$.
    \Jstandard\ derivation $\Pi' \colon \Gamma \vdash A$ in $\Nk$.

    \item\label{cor:postponement.minimal} If $A$ and the formulas in $\Gamma$ do not contain any occurrence of $\forall$ nor %any $\lnot$-free occurrence of 
    $\to$, then there exists a 
%     derivation $\Pi' \colon \Gamma \vdash A$ in $\Nk$ which contains at most one instance of the rule $\Raa$% that is not an instance of the rule $\Efq$
%     ; this instance, if any, is the last rule of $\Pi'$, the rest of $\Pi'$ being a derivation in $\Nm$.
    \Mstandard\ derivation $\Pi' \colon \Gamma \vdash A$ in $\Nk$.
  \end{enumerate}
\end{cor}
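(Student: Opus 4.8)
The plan is to reduce Corollary~\ref{cor:postponement} to Theorem~\ref{thm:postponement} by producing, out of the bare derivability assumption $\Gamma \vdash_\Nk A$, a derivation of $\Gamma \vdash A$ that \emph{syntactically} contains no instance of $\forall_\Intro$ (and, for the second item, no instance of $\to_\Intro$ either). The bridge is Stålmarck's normalization theorem for $\Nk$ together with its suitable subformula property, recalled just before the statement: from $\Gamma \vdash_\Nk A$ one first obtains a normal $\Nk$-derivation $\Pi_0 \colon \Gamma \vdash A$ (normalization changes neither the conclusion nor the set of undischarged assumptions), and every formula occurrence in $\Pi_0$ is $\bot$, or a subformula of $A$, or a subformula of some formula in $\Gamma$, or a negation $\lnot D$ of such a subformula $D$.

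First I would treat item~\ref{cor:postponement.intuitionistic}. Assume that $A$ and the formulas of $\Gamma$ contain no occurrence of $\forall$. The class of $\forall$-free formulas is closed under taking subformulas, contains $\bot$, and is closed under prefixing $\lnot$; hence, by the subformula property, every formula occurring in $\Pi_0$ is $\forall$-free. In particular no formula of the form $\forall x B$ occurs in $\Pi_0$, so $\Pi_0$ contains no instance of $\forall_\Intro$, \Ie~$\Pi_0$ is a derivation in $\Nk \smallsetminus \{\forall_\Intro\}$. Theorem~\ref{thm:postponement}.\ref{thm:postponement.intuitionistic} then yields $\Pi_0 \leadsto^* \Pi'$ for some \Jstandard\ derivation $\Pi' \colon \Gamma \vdash A$ in $\Nk \smallsetminus \{\forall_\Intro\} \subseteq \Nk$, which is exactly what item~\ref{cor:postponement.intuitionistic} requires.

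For item~\ref{cor:postponement.minimal} the argument is the same, run with the class of formulas containing neither $\forall$ nor $\to$: this class is again closed under subformulas and contains $\bot$, and it is closed under prefixing $\lnot$ precisely because in our setting $\lnot$ is a primitive connective — were $\lnot D$ read as the abbreviation $D \to \bot$, this closure would fail and item~\ref{cor:postponement.minimal} would be false as stated. So from the hypothesis that $A$ and $\Gamma$ are free of both $\forall$ and $\to$, the subformula property gives that $\Pi_0$ contains no formula of the form $\forall x B$ nor of the form $B \to C$; hence $\Pi_0$ has no instance of $\forall_\Intro$ nor of $\to_\Intro$, so it is a derivation in $\Nk \smallsetminus \{\to_\Intro, \forall_\Intro\}$ and, in particular, it contains no instance of $\to_\Intro$. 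Applying Theorem~\ref{thm:postponement}.\ref{thm:postponement.minimal} to $\Pi_0$ gives an \Mstandard\ derivation $\Pi' \colon \Gamma \vdash A$ in $\Nk$.

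The only delicate point I foresee is invoking Stålmarck's subformula property for $\Nk$ in exactly the form used above (every formula of a normal $\Nk$-derivation is $\bot$, a subformula of the conclusion or of an undischarged assumption, or a negation of such a subformula); pinning this down is where some care is needed, since — as already observed before the statement — a classically normal derivation in Stålmarck's sense may still contain instances of $\Raa$ that are not in last position, which is precisely why one still has to push them down with the reduction steps of §\ref{sect:rewrite} and why the corollary is not a mere triviality.
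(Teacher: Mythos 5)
Your proof is correct and follows essentially the same route as the paper's: obtain a Stålmarck-normal derivation, use the subformula property to conclude that no $\forall_\Intro$ (resp.\ $\to_\Intro$) instance can occur, and then invoke Theorem~\ref{thm:postponement}. Your explicit remark that the relevant formula classes are closed under prefixing $\lnot$ (which relies on $\lnot$ being primitive) just spells out what the paper's clause~\ref{point:subformula2} of the subformula principle handles implicitly.
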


\begin{proof}
First, we recall some facts that will be used to prove Corollaries~\ref{cor:postponement}.\ref{cor:postponement.intuitionistic}-\ref{cor:postponement.minimal}.
As $\Gamma \vdash A$ is derivable in $\Nk$, there exists a normal (in the sense of \cite[p.~130]{Stalmarck91}) derivation $\Pi \colon \Gamma \vdash A$ in $\Nk$ and hence, in conformity to the aforementioned \emph{subformula principle} for $\Nk$ (see \cite[p.~130]{Stalmarck91}), each formula occurrence $B$ in $\Pi$ satisfies one of the clauses \ref{point:subformula1}-\ref{point:subformula3} below:
    \begin{romanenum}
      \item\label{point:subformula1} $B$ is an occurrence of a subformula of $A$ or of some formula in $\Gamma$;
      \item\label{point:subformula2} $B$ is an assumption discharged by some instance of the rule $\Raa$, $B$ has the form $\lnot C$, and $C$ is a subformula of $A$ or of some formula in $\Gamma$;
      \item\label{point:subformula3} $B$ has the form $\bot$ and stands immediately below an assumption which satisfies \ref{point:subformula2} above. 
%       \begin{prooftree}
%         \Hypo{}
%         \Infer1[\footnotesize$\mathsf{hyp}$]{A}
%       \end{prooftree}
    \end{romanenum}

  We can now prove Corollaries~\ref{cor:postponement}.\ref{cor:postponement.intuitionistic}-\ref{cor:postponement.minimal}.
  \begin{enumerate}
    \item According to the subformula principle, there are no instances of the rule $\forall_\Intro$ in $\Pi$. 
    By Theorem~\ref{thm:postponement}.\ref{thm:postponement.intuitionistic}, 
%     there exists a derivation $\Pi' \colon \Gamma \vdash A$ in $\Nk$ containing at most one discharging instance of the rule $\Raa$% that is not an instance of the rule $\Efq$
%     . This instance, if any, is the last rule of $\Pi'$, the rest of $\Pi'$ being a derivation in $\Nj$.
    there is a \Jstandard\ \mbox{derivation $\Pi' \colon \Gamma \vdash A$ in $\Nk$.}
    
    \item According to the subformula principle, there are no %$\lnot$-free 
    instances of the rule $\to_\Intro$ or the rule %no instance of the rule 
    $\forall_\Intro$ in $\Pi$. 
    By Theorem~\ref{thm:postponement}.\ref{thm:postponement.minimal}, 
%     there is a derivation $\Pi' \colon \Gamma \vdash A$ in $\Nk$ containing at most one instance of the rule $\Raa$. This
%      instance, if any, is the last rule of $\Pi'$, the rest of $\Pi'$ being a derivation in $\Nm$.
    there is a \Mstandard\ derivation $\Pi' \colon \Gamma \vdash A$ in $\Nk$.
    \qedhere
  \end{enumerate}

% \vspace{-\baselineskip}
\end{proof}

Thus, according to Corollary~\ref{cor:postponement}.\ref{cor:postponement.intuitionistic} (resp.~Corollary~\ref{cor:postponement}.\ref{cor:postponement.minimal}), when we look for a derivation of $A$ from $\Gamma$, where $A$ and all formulas in $\Gamma$ do not contain any occurrence of $\forall$ (resp.~$\forall$ nor %$\lnot$-free occurrence of 
$\to$), we can consider the use of a discharging (resp.~either discharging or undischarging) instance of $\Raa$ only at the end of the derivation, if this use is required.

In what follows we give two examples of the postponement of $\Raa$, according Theorems~\ref{thm:postponement}.\ref{thm:postponement.intuitionistic} and \ref{thm:postponement}.\ref{thm:postponement.minimal}.

\begin{ex}
  Given two proposition symbols $\Var$ and $\Vartwo$, let $\Pi \colon \vdash (\lnot \lnot \Var \to \Var) \land (\lnot \lnot \Vartwo \to \Vartwo)$ be the following derivation in $\Nk$ (note that there are no instances of $\forall_\Intro$ in $\Pi$):
  \begin{align*}
    \Pi \ &= \ 
    \begin{prooftree}
	  \Hypo{\ulcorner \lnot \lnot \Var \urcorner^2}
	  \Hypo{\ulcorner \lnot \Var \urcorner^1}
	\Infer2[\footnotesize$\lnot_\Elim$]{\; \bot \;}
	\Infer1[\footnotesize$\Raa^1$]{\Var}
      \Infer1[\footnotesize$\to_\Intro^2$]{\lnot \lnot \Var \to \Var}
	  \Hypo{\ulcorner \lnot \lnot \Vartwo \urcorner^4}
	  \Hypo{\ulcorner \lnot \Vartwo \urcorner^3}
	\Infer2[\footnotesize$\lnot_\Elim$]{\; \bot \;}
	\Infer1[\footnotesize$\Raa^3$]{\Vartwo}
      \Infer1[\footnotesize$\to_\Intro^4$]{\lnot \lnot \Vartwo \to \Vartwo}
      \Infer2[\footnotesize$\land_\Intro$]{(\lnot \lnot \Var \to \Var) \land (\lnot \lnot \Vartwo \to \Vartwo)}
    \end{prooftree}
  \end{align*}
  For every proposition symbol $Z$, let $\pi(Z) \colon \lnot (\lnot \lnot Z \to Z) \vdash \bot$ be the following derivation in $\Nj$:
  {\small
  \begin{align*}
    \pi(Z) \ &= \ 
    \begin{prooftree}
	\Hypo{\lnot (\lnot \lnot Z \to Z) }
	  \Hypo{\ulcorner \lnot \lnot Z \urcorner^2}
	    \Hypo{\lnot (\lnot \lnot Z \to Z)}
	      \Hypo{\ulcorner Z \urcorner^1}
	      \Infer1[\footnotesize$\to_\Intro^0$]{\lnot \lnot Z \to Z}
	    \Infer2[\footnotesize$\lnot_\Elim$]{\bot}
	  \Infer1[\footnotesize$\lnot_\Intro^1$]{\lnot Z}
	  \Infer2[\footnotesize$\lnot_\Elim$]{\; \bot \;}
	  \Infer1[\footnotesize$\Efq$]{Z}
	  \Infer1[\footnotesize$\to_\Intro^2$]{\lnot \lnot Z \to Z}
	\Infer2[\footnotesize$\lnot_\Elim$]{\bot}    
    \end{prooftree}
  \end{align*}
  }
  By applying the procedure defined in the proof of Theorem~\ref{thm:postponement}.\ref{thm:postponement.intuitionistic}, we get:
  {\small 
  \begin{align*}
    \Pi \ 
    &\leadsto\!
    \begin{prooftree}[label separation = 0.2em]
	\Hypo{\ulcorner \lnot (\lnot \lnot \Var \to \Var) \urcorner^1}
	\Ellipsis{$\pi(\Var)$}{\bot}
      \Infer1[\scriptsize$\Raa^1$]{\lnot \lnot \Var \to \Var}
	  \Hypo{\ulcorner \lnot \lnot \Vartwo \urcorner^3}
	  \Hypo{\ulcorner \lnot \Vartwo \urcorner^2}
	\Infer[separation = 0.5em]2[\scriptsize$\lnot_\Elim$]{\; \bot \;}
	\Infer1[\scriptsize$\Raa^2$]{\Vartwo}
      \Infer1[\scriptsize$\to_\Intro^3$]{\lnot \lnot \Vartwo \to \Vartwo}
      \Infer[separation = 0.2em]2[\scriptsize$\land_\Intro$]{(\lnot \lnot \Var \to \Var) \land (\lnot \lnot \Vartwo \to \Vartwo)}
    \end{prooftree}
    \ \leadsto 
    \begin{prooftree}[label separation = 0.2em]
	\Hypo{\ulcorner \lnot (\lnot \lnot \Var \to \Var) \urcorner^1}
	\Ellipsis{$\pi(\Var)$}{\bot}
      \Infer1[\scriptsize$\Raa^1$]{\lnot \lnot \Var \to \Var}
	\Hypo{\ulcorner \lnot (\lnot \lnot \Vartwo \to \Vartwo) \urcorner^2}
	\Ellipsis{$\pi(\Vartwo)$}{\bot}
      \Infer1[\scriptsize$\Raa^2$]{\lnot \lnot \Vartwo \to \Vartwo}
      \Infer[separation = 0.2em]2[\scriptsize$\land_\Intro$]{(\lnot \lnot \Var \to \Var) \land (\lnot \lnot \Vartwo \to \Vartwo)}
    \end{prooftree}
    \\ \\
    &\leadsto \ 
    \begin{prooftree}
      \Hypo{\ulcorner \lnot ((\lnot \lnot \Var \to \Var) \land (\lnot \lnot \Vartwo \to \Vartwo)) \urcorner^3}
	\Hypo{\ulcorner \lnot \lnot \Var \to \Var \urcorner^2}
	\Hypo{\ulcorner \lnot \lnot \Vartwo \to \Vartwo \urcorner^1}
      \Infer2[\footnotesize$\land_\Intro$]{(\lnot \lnot \Var \to \Var) \land (\lnot \lnot \Vartwo \to \Vartwo)}
      \Infer2[\footnotesize$\lnot_\Elim$]{\bot}
	\Infer1[\footnotesize$\lnot_\Intro^1$]{\lnot (\lnot \lnot \Vartwo \to \Vartwo)}
	\Ellipsis{$\pi(\Vartwo)$}{\bot}
	\Infer1[\footnotesize$\lnot_\Intro^2$]{\lnot (\lnot \lnot \Var \to \Var)}
	\Ellipsis{$\pi(\Var)$}{\bot}
      \Infer1[\footnotesize$\Raa^3$]{(\lnot \lnot \Var \to \Var) \land (\lnot \lnot \Vartwo \to \Vartwo)}
    \end{prooftree} \ .
  \end{align*}
  }
\end{ex}

\begin{ex}
  Given two proposition symbols $\Var$ and $\Vartwo$, let $\Pi \colon \Var \vdash \Var \lor \Vartwo$ be the following derivation in $\Nk$ (note that there are no %$\lnot$-free 
    instances of $\to_\Intro$ or $\forall_\Intro$% in $\Pi$
    ):
  {\small
  \begin{align*}
    \Pi \ &= \ 
    \begin{prooftree}
      \Hypo{\ulcorner \lnot \Var \urcorner^1}
      \Hypo{\ulcorner \lnot \Var \urcorner^1}
      \Hypo{\Var}
      \Infer2[\footnotesize$\lnot_\Elim$]{\; \bot \;}
      \Infer1[\footnotesize$\Efq$]{\Var \land \Vartwo}
      \Infer1[\footnotesize$\land_{\ElimOne}$]{\Var}
      \Infer2[\footnotesize$\lnot_\Elim$]{\; \bot \;}
      \Infer1[\footnotesize$\Raa^1$]{\Var}
      \Infer1[\footnotesize$\lor_{\IntroOne}$]{\Var \lor \Vartwo}
    \end{prooftree}
  \end{align*}
  }
  By applying the procedure defined in the proof of Theorem~\ref{thm:postponement}.\ref{thm:postponement.minimal}, we get:
  {\small 
  \begin{equation*}
    \Pi \ 
    \leadsto \ 
    \begin{prooftree}[label separation = 0.2em]
      \Hypo{\ulcorner \lnot \Var \urcorner^1}
      \Hypo{\ulcorner \lnot \Var \urcorner^1}
      \Hypo{\Var}
      \Infer[separation = 0.9em]2[\footnotesize$\lnot_\Elim$]{\; \bot \;}
      \Infer1[\footnotesize$\Efq$]{\Var}
      \Infer[separation = 0.5em]2[\footnotesize$\lnot_\Elim$]{\; \bot \;}
      \Infer1[\footnotesize$\Raa^1$]{\Var}
      \Infer1[\footnotesize$\lor_{\IntroOne}$]{\Var \lor \Vartwo}
    \end{prooftree} \ 
    \leadsto \ 
    \begin{prooftree}[label separation = 0.2em]
      \Hypo{\ulcorner \lnot \Var \urcorner^1}
      \Hypo{\Var}
      \Infer[separation = 0.9em]2[\footnotesize$\lnot_\Elim$]{\; \bot \;}
      \Infer1[\footnotesize$\Raa^1$]{\Var}
      \Infer1[\footnotesize$\lor_{\IntroOne}$]{\Var \lor \Vartwo}
    \end{prooftree} \ 
    \leadsto \ 
    \begin{prooftree}[label separation = 0.2em]
      \Hypo{\ulcorner \lnot (\Var \lor \Vartwo) \urcorner^2}
      \Hypo{\ulcorner \Var \urcorner^1}
      \Infer1[\footnotesize$\lor_{\IntroOne}$]{\Var \lor \Vartwo}
      \Infer[separation = 0.9em]2[\footnotesize$\lnot_\Elim$]{\bot}
      \Infer1[\footnotesize$\lnot_\Intro^1$]{\lnot \Var}
      \Hypo{\Var}
      \Infer[separation = 0.5em]2[\footnotesize$\lnot_\Elim$]{\; \bot \;}
      \Infer1[\footnotesize$\Raa^2$]{\Var \lor \Vartwo}
    \end{prooftree} \,.
  \end{equation*}
  }
\end{ex}

%\end{document} %5

%\documentclass[main.tex]{subfiles} %Section 6
% 
%\begin{document}
\section{\texorpdfstring{Generalized Glivenko's theorem}{Generalized Glivenko's theorem}}
\label{sect:Glivenko}

As already mentioned in \S\ref{sect:intro}-\ref{sect:overview}, an immediate consequence of the postponement of $\Raa$ is the weak normalization of $\Nk \smallsetminus \{\forall_\Intro\}$.
Indeed, a normalization strategy is the following: by Theorem~\ref{thm:postponement}.\ref{thm:postponement.intuitionistic}, any derivation $\pi \colon \Gamma \vdash A$ in $\Nk \smallsetminus \{\forall_\Intro\}$ reduces to a \Jstandard\ derivation $\pi' \colon \Gamma \vdash A$ in $\Nk \smallsetminus \{\forall_\Intro\}$ (which is a derivation in $\Nj %\smallsetminus \{\forall_\Intro\}
$, possibly except for its last rule which could be a discharging instance of $\Raa$), then one can apply Prawitz's original weak normalization theorem for $\Nj$ \cite[p.~50]{Prawitz65} to $\pi'$ (or to $\pi'$ without its last rule), so as to obtain a normal derivation $\pi'' \colon \Gamma \vdash A$ in $\Nk \smallsetminus \{\forall_\Intro\}$.

Another consequence of the postponement of $\Raa$ (Theorem~\ref{thm:postponement}, Corollary~\ref{cor:postponement}) is a %new and 
strengthened form of Glivenko's theorem embedding full first-order classical logic not only into the fragment $\{\bot, \top, \lnot, \land, \lor, \to, \exists\}$ of intuitionistic logic (Theorem~\ref{thm:glivenko-intuitionist}), but also into the fragment $\{\bot, \top, \lnot, \land, \lor, \exists\}$ of minimal logic (Theorem~\ref{thm:glivenko}).
% Note that both fragments are adequate for first-order classical logic.
The idea is that, given a derivation $\Pi$ in $\Nk$ whose last rule is an instance of $\Raa$, the rest of $\Pi$ is a subderivation in $\Nj$ or $\Nm$; the instance of $\Raa$ can thus be replaced by an instance of $\lnot_\Intro$.

We define a translation $(\cdot)^\Transl$ (resp.~$(\cdot)^\Translj$) on formulas that just redefines the implication and the universal quantifier (resp.~only the universal quantifier) in a  classical way, using the negation, the disjunction and the existential quantifier (resp.~the negation and the existential quantifier).
All other connectives and the existential quantifier are left alone.

\begin{defin}[Minimal and intuitionistic translations]\label{def:translation}
%   Let $A$ be a formula. 
%   The \emph{minimal translation} $A^\Transl$ of $A$ is defined by induction on $A$ as follows:
  The \emph{minimal translation} is a function $(\cdot)^\Transl$ associating with every formula $A$ a formula $A^\Transl$ defined by induction on $A$ as follows:
  \begin{align*}
   ( \Var(t_1, \dots, t_n))^\Transl &= \Var(t_1, \dots, t_n)  	& \top^\Transl &= \top					& \bot^\Transl &= \bot \\
    (A \land B)^\Transl &= A^\Transl \land B^\Transl 	& (A \lor B)^\Transl &= A^\Transl \lor B^\Transl 	& (\lnot A)^\Transl &= \lnot A^\Transl \\
    (A \to B)^\Transl &= \lnot A^\Transl \lor B^\Transl & (\forall x  A)^\Transl &= \lnot \exists x \, \lnot A^\Transl & (\exists x  A)^\Transl &= \exists x  A^\Transl
  \end{align*}

  The \emph{intuitionistic translation} is a function $(\cdot)^\Translj$ associating with every formula $A$ a formula $A^\Translj$ defined by induction on $A$ as follows:
  \begin{align*}
    (\Var(t_1, \dots, t_n))^\Translj &= \Var(t_1, \dots, t_n)  	& \top^\Translj &= \top					& \bot^\Translj &= \bot \\
    (A \land B)^\Translj &= A^\Translj \land B^\Translj 	& (A \lor B)^\Translj &= A^\Translj \lor B^\Translj 	& (\lnot A)^\Translj &= \lnot A^\Translj \\
    (A \to B)^\Translj &= A^\Translj \to B^\Translj & (\forall x  A)^\Translj &= \lnot \exists x \, \lnot A^\Translj & (\exists x  A)^\Translj &= \exists x  A^\Translj.
  \end{align*}
  Given a set of formulas $\Gamma$, we set $\Gamma^\Transl = \{A^\Transl \mid A \in \Gamma\}$ and $\Gamma^\Translj = \{A^\Translj \mid A \in \Gamma\}$.
\end{defin}

The difference between $(\cdot)^\Transl$ and $(\cdot)^\Translj$ is only in the translation of $A \to B$.
Our minimal and intuitionistic translations are deeply related to Kuroda's negative translation.
More precisely, if $(\cdot)^{\Transl'}$ and $(\cdot)^{\Translj'}$ are the translations defined as in Definition~\ref{def:translation}, except for 
\begin{align*}
   (\forall x  A)^{\Transl'} &= \forall x \, \lnot\lnot A^{\Transl'} & (\forall x  A)^{\Translj'} &= \forall x \, \lnot\lnot A^{\Translj'},
\end{align*}
then the negative translation $A \mapsto \lnot\lnot A^{\Translj'}$ is the one defined by Kuroda in \cite{Kuroda51}, while the negative translation $A \mapsto \lnot\lnot A^{\Transl'}$ is a variant of Kuroda's one introduced in \cite[p.~231]{FerreiraOliva12}.\label{kurodaoriginal}

Using the terminology of \cite{FerreiraOliva12}, $(\cdot)^\Transl$ and $(\cdot)^\Translj$ are modular translations in the sense that the translation of a formula is based on the translation of its immediate subformulas.
The names ``minimal'' and ``intuitionistic'' associated with $(\cdot)^\Transl$ and $(\cdot)^\Translj$, respectively, are due to the derivability relation they preserve: this will be clarified in Theorems \ref{thm:glivenko}.\ref{thm:glivenko.nottranslated}-\ref{thm:glivenko-intuitionist}.\ref{thm:glivenko-intuitionist.nottranslated} and Propositions~\ref{prop:inverse}.\ref{prop:inverse-minimal}-\ref{prop:inverse-intuitionist} below, which imply that
\begin{equation*}
\begin{aligned}
  \vdash_\Nk A &\quad \mathrm{iff}& \vdash_\Nj \lnot \lnot A^\Translj &\quad \mathrm{iff}& \vdash_\Nm \lnot\lnot A^\Transl.
\end{aligned}
\end{equation*}
A consequence of this fact, together with Remark~\ref{rmk:translation}.\ref{rmk:translation.interderivable} below, is that the translation $A \mapsto \lnot\lnot A^\Transl$ (resp.~$A \mapsto \lnot\lnot A^\Translj$) is modular \emph{negative} according to \cite{FerreiraOliva12}.
%\cite{FerreiraOliva11} (resp.~\cite{FerreiraOliva12}).
Besides, the modular negative translations $A \mapsto \lnot\lnot A^\Transl$ and $A \mapsto \lnot\lnot A^{\Transl'}$ (resp.~$A \mapsto \lnot\lnot A^\Translj$ and $A \mapsto \lnot\lnot A^{\Translj'}$) are the same according to \cite[Definition~2]{FerreiraOliva12}, in the sense that they are interderivable with respect to minimal logic.%
\footnote{Note, in particular, that $\neg\exists{x}\neg A$ is provably equivalent to $\forall{x}\neg\neg A$ in minimal logic.}
However, quite interestingly, they have a very different behavior with respect to the postponement of $\Raa$: only the negative translation $A \mapsto \lnot\lnot A^{\Transl}$ (resp.~$A \mapsto \lnot\lnot A^\Translj$ ) allows one to use Theorem~\ref{thm:postponement}.\ref{thm:postponement.minimal} (resp.~Theorem~\ref{thm:postponement}.\ref{thm:postponement.intuitionistic}), since in $A^{\Transl'}$ (resp.~$A^{\Translj'}$) the universal quantifier might occur, with the disturbing effect pointed out in \S\ref{subsect:seldinstrategy} and Remark~\ref{rmk:forall}.
A variant of our minimal translation is discussed in Appendix \ref{sect:alternative}.
% It can easily be proved that $\vdash A$ is derivable in $\Nk$ if and only if $\vdash \lnot \lnot A^\Transl$ is derivable in $\Nm$ if and only if $\vdash \lnot\lnot A^\Translj$ is derivable in $\Nj$ (see \cite{TroelstraSchwichtenberg00,FerreiraOliva12}).\footnote{Notice, in particular, that in minimal logic $\neg\exists{x}\neg A$ is provably equivalent to $\forall{x}\neg\neg A$.}

\begin{rmk}\label{rmk:translation}
  For every formula $A$, by induction on $A$ we can prove that:
  
  \begin{enumerate}
%     \item\label{rmk:translation.interderivable} $\vdash A$ is derivable in $\Nk$ if and only if $\vdash A^\Transl$ is derivable in $\Nk$ if and only if $\vdash A^\Translj$ is derivable in $\Nk$;
    \item\label{rmk:translation.interderivable} $\vdash_\Nk A$ if and only if $\vdash_\Nk A^\Transl$ if and only if $\vdash_\Nk A^\Translj$;

    \item\label{rmk:translation.nobadconnectives} $A^\Transl$ contains no occurrences of $\to$ and $\forall$; $A^\Translj$ contains no occurrences of $\forall$;
    
    \item\label{rmk:translation.free} the free variables in $A^\Transl$ and $A^\Translj$ are the same as in $A$, and $(A \Sub{t}{x})^\Transl = A^\Transl \Sub{t}{x}$ and $(A \Sub{t}{x})^\Translj = A^\Translj \Sub{t}{x}$ for any term $t$;
    
    \item\label{rmk:translation.identity} $A^\Transl = A$ if $A$ contains no occurrences of $\to$ and $\forall$; $A^\Translj = A$ if $A$ contains no occurrences of $\forall$.
  \end{enumerate}
\end{rmk}

% For any formula $A$, it is immediate to check that $\vdash A^\Transl$ (resp.~$\vdash A^\Translj$) is derivable in $\Nk$ if and only if $\vdash A$ is so. 
% Actually, we can claim this in a more constructive way, thanks to the following lemma.
% Actually, Remark~\ref{rmk:translation}.\ref{rmk:translation.interderivable} (the standard proof of which is not constructive from a proof-theoretic viewpoint, as it is based on the completeness theorem for first-order classical logic and on the semantic notion of logical equivalence implicitly inscribed in the definition of $(\cdot)^\Translj$ and $(\cdot)^\Transl$) can be reformulated in a more informative and constructive way, thanks to (the proof of) the following lemma.
Actually, one direction of the equivalences in Remark~\ref{rmk:translation}.\ref{rmk:translation.interderivable} can be reformulated in a more informative and constructive way from a proof-theoretic viewpoint, thanks to (the proof of) the following lemma.

% \begin{lemma}[Preservation of provability via translation]\label{lemma:translation}
%   Let $B$ be a formula and $\Gamma$ be a finite set of formulas.
%   One has: $\Gamma \vdash B$ is derivable in $\Nk$ if and only if $\Gamma^\Transl \vdash B^\Transl$ is derivable in $\Nk$.
% \end{lemma}
% 
% \begin{proof}
%   Let $\Gamma = \{A_1, \dots, A_n\}$ where $n \in \Nat$: we set $\Gamma \to B = (A_1 \to \dots (A_n \to B) \dots )$ (this is well-defined up to a permutation of $A_1, \dots, A_n$); in particular, if $\Gamma = \emptyset$ then $\Gamma \to B = B$.
%   By applying the rules $\to_\Intro$ and $\to_\Elim$ in the obvious way, one has:
%   $\Gamma \vdash A$ is derivable in $\Nk$ if and only if 
%   $\vdash \Gamma \to A$ is derivable in $\Nk$ if and only if (by Remark~\ref{rmk:translation}.\ref{rmk:translation.interderivable})
%   $\vdash \Gamma^\Transl \to A^\Transl$ is derivable in $\Nk$ if and only if  
%   \mbox{$\Gamma^\Transl \vdash A^\Transl$ is derivable in $\Nk$.}
% \end{proof}
\begin{lemma}[Preservation of derivability in $\Nk$ w.r.t. translations]\label{lemma:translation}
  For every derivation $\Pi \colon \Gamma \vdash A$ in $\Nk$ there exist a derivation $\Pi' \colon \Gamma^\Transl \vdash A^\Transl$ in $\Nk \smallsetminus \{\to_\Intro, \forall_\Intro\}$ and a derivation $\Pi'' \colon \Gamma^\Translj \vdash A^\Translj$ in $\Nk \smallsetminus \{\forall_\Intro\}$.
\end{lemma}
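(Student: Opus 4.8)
I would argue by structural induction on the derivation $\Pi$, with a case analysis on its last rule, building $\Pi'$ (for $(\cdot)^\Transl$) and $\Pi''$ (for $(\cdot)^\Translj$) simultaneously. Since $(\cdot)^\Transl$ and $(\cdot)^\Translj$ agree on all connectives except $\to$, and both send $\forall x\,A$ to $\lnot\exists x\,\lnot A^{(\cdot)}$, the two constructions differ only at $\to_\Intro$ and $\to_\Elim$; for $(\cdot)^\Translj$ those two rules are translated into themselves. For a leaf, and for each of $\top_\Intro$, $\lnot_\Intro$, $\lnot_\Elim$, $\land_\Intro$, $\land_{\ElimOne}$, $\land_{\ElimTwo}$, $\lor_{\IntroOne}$, $\lor_{\IntroTwo}$, $\lor_\Elim$, $\exists_\Intro$, $\exists_\Elim$, $\Raa$ (hence also $\Efq$), the translation commutes with the rule: apply the induction hypothesis to the immediate subderivation(s) and then the same rule to the translated conclusions. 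The only side-conditions to check are the eigenvariable conditions of $\exists_\Elim$, which are preserved because by Remark~\ref{rmk:translation}.\ref{rmk:translation.free} the free variables of a formula and of its translation coincide. None of these steps introduces $\to_\Intro$ or $\forall_\Intro$, so $\Pi'$ stays in $\Nk\smallsetminus\{\to_\Intro,\forall_\Intro\}$ and $\Pi''$ in $\Nk\smallsetminus\{\forall_\Intro\}$.

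\textbf{The $\forall$ rules (identical for both translations).} For $\forall_\Intro$, deriving $\forall x\,A$ from a subderivation of $\Gamma\vdash A$ with $x$ not free in $\Gamma$: by the induction hypothesis I have a derivation of $\Gamma^\Transl\vdash A^\Transl$, and by Remark~\ref{rmk:translation}.\ref{rmk:translation.free} the variable $x$ is still not free in $\Gamma^\Transl$; I then derive $(\forall x\,A)^\Transl=\lnot\exists x\,\lnot A^\Transl$ by assuming $\lnot A^\Transl$, applying $\lnot_\Elim$ against $A^\Transl$ to get $\bot$, discharging $\lnot A^\Transl$ with an $\exists_\Elim$ applied to a fresh assumption $\exists x\,\lnot A^\Transl$ (the eigenvariable condition holds since $x$ is not free in $\bot$ nor in $\Gamma^\Transl$), and finally $\lnot_\Intro$ discharging $\exists x\,\lnot A^\Transl$. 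For $\forall_\Elim$, deriving $A\Sub{t}{x}$ from $\Gamma\vdash\forall x\,A$: note $(A\Sub{t}{x})^\Transl=A^\Transl\Sub{t}{x}$ by Remark~\ref{rmk:translation}.\ref{rmk:translation.free}; from $\Gamma^\Transl\vdash\lnot\exists x\,\lnot A^\Transl$ I assume $\lnot A^\Transl\Sub{t}{x}$, apply $\exists_\Intro$ to get $\exists x\,\lnot A^\Transl$, then $\lnot_\Elim$ to get $\bot$, and $\Raa$ discharging $\lnot A^\Transl\Sub{t}{x}$ to conclude $A^\Transl\Sub{t}{x}$. Both gadgets avoid $\to_\Intro$ and $\forall_\Intro$, and the same argument applies verbatim to $(\cdot)^\Translj$.

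\textbf{The $\to$ rules for $(\cdot)^\Transl$.} For $\to_\Elim$, from the induction hypotheses $\Gamma^\Transl\vdash\lnot A^\Transl\lor B^\Transl$ and $\Gamma^\Transl\vdash A^\Transl$ I apply $\lor_\Elim$: the branch assuming $\lnot A^\Transl$ yields $\bot$ by $\lnot_\Elim$ against $A^\Transl$ and then $B^\Transl$ by $\Efq$; the branch assuming $B^\Transl$ is trivial; hence $\Gamma^\Transl\vdash B^\Transl$. For $\to_\Intro$, from the induction hypothesis $\pi'\colon\Gamma^\Transl,A^\Transl\vdash B^\Transl$ I first build, from a fresh assumption $\lnot(\lnot A^\Transl\lor B^\Transl)$, a derivation of $A^\Transl$: assume $\lnot A^\Transl$, get $\lnot A^\Transl\lor B^\Transl$ by $\lor_{\IntroOne}$, get $\bot$ by $\lnot_\Elim$, and discharge $\lnot A^\Transl$ by $\Raa$. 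Composing this above every undischarged occurrence of the assumption $A^\Transl$ in $\pi'$ gives a derivation of $B^\Transl$ whose undischarged assumptions lie in $\Gamma^\Transl$ together with copies of $\lnot(\lnot A^\Transl\lor B^\Transl)$; then $\lor_{\IntroTwo}$, $\lnot_\Elim$ against that same assumption, and $\Raa$ discharging all its copies yield $\lnot A^\Transl\lor B^\Transl=(A\to B)^\Transl$, again without $\to_\Intro$ or $\forall_\Intro$. For $(\cdot)^\Translj$ I simply apply the induction hypothesis and then $\to_\Intro$ (resp.\ $\to_\Elim$), staying in $\Nk\smallsetminus\{\forall_\Intro\}$.

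\textbf{Where the work is.} The delicate point is $\to_\Intro$ for the minimal translation: it cannot be obtained by merely threading the induction hypothesis through, and needs the genuinely classical detour through $\lnot(\lnot A^\Transl\lor B^\Transl)$ and $\Raa$ — this is exactly the obstruction recorded in Remark~\ref{rmk:tointro} (there is no minimal simulation, since $A\to\lnot\lnot B$ does not entail $\lnot\lnot(A\to B)$ in $\Nm$). The proof composition there must be handled carefully: the marker of the fresh $\lnot(\lnot A^\Transl\lor B^\Transl)$ has to be chosen disjoint from the markers of $\pi'$, and when the $A^\Transl$-subderivation is copied above each assumption $A^\Transl$ of $\pi'$ all the resulting copies of $\lnot(\lnot A^\Transl\lor B^\Transl)$ are ultimately discharged by the single final $\Raa$. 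The remaining bookkeeping — that the set of undischarged assumptions stays inside the intended $\Gamma^\Transl$ (resp.\ $\Gamma^\Translj$) and that eigenvariable conditions are maintained — is uniform and relies throughout on Remark~\ref{rmk:translation}.\ref{rmk:translation.free}.
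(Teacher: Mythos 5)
Your proof is correct and follows essentially the same route as the paper's: induction on $\Pi$ with the only interesting cases being the $\to$- and $\forall$-rules for $(\cdot)^\Transl$ and the $\forall$-rules for $(\cdot)^\Translj$, and your gadgets for $\to_\Elim$, $\forall_\Intro$ and $\forall_\Elim$ coincide with the paper's. The one place you diverge is $\to_\Intro$ for the minimal translation: the paper first derives the excluded middle $\lnot A^\Transl \lor A^\Transl$ by $\Raa$ and then performs a $\lor_\Elim$ whose second minor premise is the induction-hypothesis derivation $\pi'$ with $A^\Transl$ discharged by that very $\lor_\Elim$; you instead derive $A^\Transl$ from a fresh assumption $\lnot(\lnot A^\Transl \lor B^\Transl)$ and graft that derivation above every open occurrence of $A^\Transl$ in $\pi'$, closing with a final $\Raa$. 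Both constructions are valid and stay in $\Nk \smallsetminus \{\to_\Intro, \forall_\Intro\}$; the paper's version has the small advantage of leaving $\pi'$ untouched, so it needs none of the proof-composition bookkeeping (marker disjointness and, more importantly, renaming of $\exists_\Elim$ eigenvariables of $\pi'$ that might occur free in $B^\Transl$ and hence in the newly inserted assumption) that your substitution step requires and that you rightly flag as the delicate point.
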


\begin{proof}
  By induction on the derivation $\Pi$ in $\Nk$. 
  Let us consider its last rule $\mathsf{r}$.
  Due to Definition~\ref{def:translation}, to prove the existence of a derivation $\Pi' \colon \Gamma^\Transl \vdash A^\Transl$ in $\Nk \smallsetminus \{\to_\Intro, \forall_\Intro\}$ the only interesting cases are when $\mathsf{r}$ is an instance of $\to_\Intro$ or $\to_\Elim$ or $\forall_\Intro$ or $\forall_\Elim$.
  
  \smallskip
  If $\mathsf{r}$ is an instance of $\to_\Intro$, then $\Pi \colon \Gamma \vdash A \to B$ and there is a derivation $\pi \colon \Gamma, A \vdash B$ in $\Nk$ such that
  {\small
  \begin{equation*}
    \Pi = 
    \begin{prooftree}
      \Hypo{\ulcorner A \urcorner^1}
      \Ellipsis{$\pi$}{B}
      \Infer1[\footnotesize$\to_\Intro^1$]{A \to B}
    \end{prooftree}.
  \end{equation*}
  }
  By the induction hypothesis, there is a derivation $\pi' \colon \Gamma^\Transl, A^\Transl \vdash B^\Transl$ in $\Nk \smallsetminus \{\to_\Intro, \forall_\Intro\}$.
  Let 
  {\small
  \begin{equation*}
    \Pi' = 
    \begin{prooftree}[label separation=0.3em]
	\Hypo{\ulcorner \lnot(\lnot A^\Transl \lor A^\Transl) \urcorner^2}
	  \Hypo{\ulcorner \lnot(\lnot A^\Transl \lor A^\Transl) \urcorner^2}
	    \Hypo{\ulcorner A^\Transl \urcorner^1}
	  \Infer1[\footnotesize$\lor_{\IntroOne}$]{\lnot A^\Transl \lor A^\Transl}
	  \Infer2[\footnotesize$\lnot_\Elim$]{\bot}
	  \Infer1[\footnotesize$\lnot_\Intro^1$]{\lnot A^\Transl}
	\Infer1[\footnotesize$\lor_{\IntroTwo}$]{\lnot A^\Transl \lor A^\Transl}
	\Infer[separation=-3em]2[\footnotesize$\lnot_\Elim$]{\bot}
      \Infer1[\footnotesize$\Raa^2$]{\lnot A^\Transl \lor A^\Transl}
	\Hypo{\ulcorner \lnot A^\Transl \urcorner^3}
      \Infer[separation=-1em]1[\footnotesize$\lor_{\IntroOne}$]{\!\!\lnot A^\Transl \lor B^\Transl}
	\Hypo{\ulcorner A^\Transl \urcorner^3}
	\Ellipsis{$\pi'$}{B^\Transl}
      \Infer1[\footnotesize$\lor_{\IntroTwo}$]{\lnot A^\Transl \lor B^\Transl}
      \Infer[separation=1em]3[\footnotesize$\lor_\Elim^3$]{\lnot A^\Transl \lor B^\Transl}
    \end{prooftree}.
  \end{equation*}
  }
\noindent So, $\Pi' \colon \Gamma^\Transl \vdash \lnot A^\Transl \lor B^\Transl$ is a derivation in $\Nk \smallsetminus \{\to_\Intro, \forall_\Intro\}$, where $\lnot A^\Transl \lor B^\Transl = (A \to B)^\Transl$.

  \smallskip
  If $\mathsf{r}$ is an instance of $\to_\Elim$, then $\Pi \colon \Gamma \vdash B$ and there are derivations $\pi_1 \colon \Gamma_1 \vdash A \to B$ and $\pi_2 \colon \Gamma_2 \vdash A$ in $\Nk$ such that $\Gamma = \Gamma_1 \cup \Gamma_2$ and
  {\small
  \begin{equation*}
    \Pi = 
    \begin{prooftree}
	\Hypo{}
      \Ellipsis{$\pi_1$}{A \to B}
	\Hypo{}
      \Ellipsis{$\pi_2$}{A}
      \Infer2[\footnotesize$\to_\Elim$]{ B}
    \end{prooftree}.
  \end{equation*}
  }
  
  \noindent By the induction hypothesis, there are derivations $\pi_1' \colon \Gamma_1^\Transl \vdash \lnot A^\Transl \lor B^\Transl$ and $\pi_2' \colon \Gamma_2^\Transl \vdash A^\Transl$ in $\Nk \smallsetminus \{\to_\Intro, \forall_\Intro\}$.
  Let 
  {\small
  \begin{equation*}
    \Pi' = 
    \begin{prooftree}
	\Hypo{}
      \Ellipsis{$\pi_1'$}{\lnot A^\Transl \lor B^\Transl}
	\Hypo{\ulcorner \lnot A^\Transl \urcorner^1}
	  \Hypo{}
	\Ellipsis{$\pi_2'$}{A^\Transl}
	\Infer2[\footnotesize$\lnot_{\Elim}$]{\bot}
      \Infer1[\footnotesize$\Efq$]{B^\Transl}
      \Hypo{\ulcorner B^\Transl \urcorner^1}
      \Infer3[\footnotesize$\lor_\Elim^1$]{B^\Transl}
    \end{prooftree}.
  \end{equation*}
  }
  
  \noindent Thus, $\Pi' \colon \Gamma^\Transl \vdash B^\Transl$ is a derivation in $\Nk \smallsetminus \{\to_\Intro, \forall_\Intro\}$.

  \smallskip
  If $\mathsf{r}$ is an instance of $\forall_\Intro$, then $\Pi \colon \Gamma \vdash \forall x A$ and there is a derivation $\pi \colon \Gamma \vdash A$ in $\Nk$ such that the variable $x$ is not free in any formula of $\Gamma$ and
  {\small
  \begin{equation*}
    \Pi = 
    \begin{prooftree}
	\Hypo{}
      \Ellipsis{$\pi$}{A}
      \Infer1[\footnotesize$\forall_\Intro$]{\forall x A}
    \end{prooftree}.
  \end{equation*}
  }
  
  \noindent By the induction hypothesis, there is a derivation $\pi' \colon \Gamma^\Transl \vdash A^\Transl$ in $\Nk \smallsetminus \{\to_\Intro, \forall_\Intro\}$.
  By Remark~\ref{rmk:translation}.\ref{rmk:translation.free}, the variable $x$ is not free in any formula of $\Gamma^\Transl$.
  Let 
  {\small
  \begin{equation*}
    \Pi' = 
    \begin{prooftree}
      \Hypo{\ulcorner \exists x \lnot A^\Transl \urcorner^2}
	\Hypo{\ulcorner \lnot A^\Transl \urcorner^1}
	  \Hypo{}
	\Ellipsis{$\pi'$}{A^\Transl}
      \Infer2[\footnotesize$\lnot_{\Elim}$]{\bot}
      \Infer2[\footnotesize$\exists_\Elim^1$]{\bot}
      \Infer1[\footnotesize$\lnot_\Intro^2$]{\lnot \exists x \lnot A^\Transl}
    \end{prooftree}.
  \end{equation*}
  }
  
  \noindent Thus, $\Pi' \colon \Gamma^\Transl \vdash \lnot \exists x \lnot A^\Transl$ is a derivation in $\Nk \smallsetminus \{\to_\Intro, \forall_\Intro\}$, where $(\forall x A)^\Transl = \lnot \exists x \lnot A^\Transl$.

  \smallskip
  If $\mathsf{r}$ is an instance of $\forall_\Elim$, then $\Pi \colon \Gamma \vdash A\Sub{t}{x}$ and there is a derivation $\pi \colon \Gamma \vdash \forall x A$ in $\Nk$ such that 
  {\small
  \begin{equation*}
    \Pi = 
    \begin{prooftree}
	\Hypo{}
      \Ellipsis{$\pi$}{\forall x A}
      \Infer1[\footnotesize$\forall_\Elim$]{A\Sub{t}{x}}
    \end{prooftree}.
  \end{equation*}
  }
  
  \noindent By the induction hypothesis, there is a derivation $\pi' \colon \Gamma^\Transl \vdash \lnot \exists x \lnot A^\Transl$ in $\Nk \smallsetminus \{\to_\Intro, \forall_\Intro\}$.
  Let 
  {\small
  \begin{equation*}
    \Pi' = 
    \begin{prooftree}
      \Hypo{}
	\Ellipsis{$\pi'$}{\lnot \exists x \lnot A^\Transl}
	\Hypo{\ulcorner \lnot A^\Transl \Sub{t}{x} \urcorner^1}
      \Infer1[\footnotesize$\exists_{\Intro}$]{\exists x \lnot A^\Transl}
      \Infer2[\footnotesize$\lnot_\Elim$]{\bot}
      \Infer1[\footnotesize$\Raa^1$]{A^\Transl \Sub{t}{x}}
    \end{prooftree}.
  \end{equation*}
  }
  
  \noindent Thus, $\Pi' \colon \Gamma^\Transl \vdash A^\Transl \Sub{t}{x}$ is a derivation in $\Nk \smallsetminus \{\to_\Intro, \forall_\Intro\}$, where $(A\Sub{t}{x})^\Transl = A^\Transl \Sub{t}{x}$ by Remark~\ref{rmk:translation}.\ref{rmk:translation.free}.
  
  \smallskip
  The proof of the existence of a derivation $\Pi'' \colon \Gamma^\Translj \vdash A^\Translj$ in $\Nk \smallsetminus \{\forall_\Intro\}$ is analogous to the proof of the existence of a derivation $\Pi' \colon \Gamma^\Transl \vdash A^\Transl$ in $\Nk \smallsetminus \{\to_\Intro, \forall_\Intro\}$, but the only interesting cases are when $\mathsf{r}$ is an instance of $\forall_\Intro$ or $\forall_\Elim$.
\end{proof}

% Note that, in general, provability in $\Nj$ (resp.~$\Nm$) is not preserved via the translation $(\cdot)^\Translj$ (resp.~$(\cdot)^\Transl$): for instance, $\vdash \Var \to \Var$ is derivable in $\Nm$ but $\vdash \lnot \Var \lor \Var$ is not derivable in $\Nm$, where $(\Var \to \Var)^\Transl = \lnot \Var \lor \Var$.
% TODO: Is it true for \Nj?
%Note that, 
In general, derivability in $\Nm$ is not preserved via the translation~$(\cdot)^\Transl$: \textit{e.g.} $\vdash_\Nm \Var \to \Var$ but $\not\vdash_\Nm \lnot \Var \lor \Var$, where $(\Var \to \Var)^\Transl = \lnot \Var \lor \Var$.
Also, a formula $A$ in general is not derivably equivalent to $A^\Translj$  in $\Nj$ (resp.~$A^\Transl$ in $\Nm$), since $\forall x A$ is not equivalent to $\lnot\exists x \lnot A$ in intuitionistic (resp.~minimal) logic.

\begin{thm}[Generalized Glivenko's theorem, minimal version]\label{thm:glivenko}\hfill
  \begin{enumerate}
%     \item\label{thm:glivenko.translated} If $\Gamma \vdash A$ is derivable in $\Nk$, then $\Gamma^\Transl \vdash \lnot \lnot A^\Transl$ and $\Gamma^\Transl, \lnot A^\Transl \vdash \bot$ are derivable in $\Nm \smallsetminus \{\to_\Intro, \to_\Elim, \forall_\Intro, \forall_\Elim\}$.
    \item\label{thm:glivenko.translated} If $\Gamma \vdash_\Nk A$, then $\Gamma^\Transl \vdash_\mathsf{D} \lnot \lnot A^\Transl$ and $\Gamma^\Transl, \lnot A^\Transl \vdash_\mathsf{D} \bot$, where $\mathsf{D} = \Nm \smallsetminus \{\to_\Intro, \to_\Elim, \forall_\Intro, \forall_\Elim\}$.
% TODO: What about the converse? Is it true constructively?

    \item\label{thm:glivenko.nottranslated} If $\to$ and $\forall$ occur neither in $A$ nor in any formula of $\Gamma$, then the following are equivalent: 

    \vspace{-.5\baselineskip}
%     \begin{multicols}{2}
%     \begin{enumerate}[label=(\alph*),ref=\alph*]
%       \item\label{point:Nk} $\Gamma \vdash A$ is derivable in $\Nk$,
%       \item\label{point:notnotNm} $\Gamma \vdash \lnot \lnot A$ is derivable in $\Nm$,
%       \item\label{point:botNm} $\Gamma, \lnot A \vdash \bot$ is derivable in $\Nm$.
%     \end{enumerate}
%     \end{multicols}
    \begin{multicols}{3}
    \begin{enumerate}[label=(\alph*),ref=\alph*]
      \item\label{point:Nk} $\Gamma \vdash_\Nk A$,
      \item\label{point:notnotNm} $\Gamma \vdash_\Nm \lnot \lnot A$,
      \item\label{point:botNm} $\Gamma, \lnot A \vdash_\Nm \bot$.
    \end{enumerate}
    \end{multicols}
    
    \vspace{-.5\baselineskip}
    If moreover $A = \lnot B$, then: $\Gamma \vdash_\Nk \lnot B$ if and only if $\Gamma \vdash_\Nm \lnot B$.
  \end{enumerate}
\end{thm}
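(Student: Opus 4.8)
The plan is to obtain part~\ref{thm:glivenko.translated} from the postponement of $\Raa$ (Corollary~\ref{cor:postponement}.\ref{cor:postponement.minimal}) transported through the minimal translation, and then to read off part~\ref{thm:glivenko.nottranslated} from part~\ref{thm:glivenko.translated} using that $(\cdot)^\Transl$ is the identity on $\{\to,\forall\}$-free formulas (Remark~\ref{rmk:translation}.\ref{rmk:translation.identity}).

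For part~\ref{thm:glivenko.translated} I would start from a derivation $\Pi\colon\Gamma\vdash A$ in $\Nk$. Lemma~\ref{lemma:translation} yields a derivation $\Pi'\colon\Gamma^\Transl\vdash A^\Transl$ in $\Nk\smallsetminus\{\to_\Intro,\forall_\Intro\}$, so in particular $\Gamma^\Transl\vdash_\Nk A^\Transl$, and by Remark~\ref{rmk:translation}.\ref{rmk:translation.nobadconnectives} neither $A^\Transl$ nor any formula of $\Gamma^\Transl$ contains $\to$ or $\forall$. Hence Corollary~\ref{cor:postponement}.\ref{cor:postponement.minimal} applies and delivers an \Mstandard\ derivation $\Pi''\colon\Gamma^\Transl\vdash A^\Transl$ in $\Nk$. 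The point I would make explicit here is that $\Pi''$ uses no $\to$- or $\forall$-rule whatsoever: the classical normal derivation furnished by Stålmarck's theorem in the proof of Corollary~\ref{cor:postponement} has, by the subformula property recalled there, only $\{\to,\forall\}$-free formula occurrences, hence no $\to$- or $\forall$-rule at all; and, by inspection of the reduction steps of \S\ref{sect:rewrite}, none of them introduces a fresh $\to$- or $\forall$-rule into a derivation lacking them. Removing from $\Pi''$ its last rule when the latter is an instance of $\Raa$, one is left with a derivation $\pi_0$ in $\mathsf{D}=\Nm\smallsetminus\{\to_\Intro,\to_\Elim,\forall_\Intro,\forall_\Elim\}$: if $\Pi''$ ends with $\Raa$, then $\pi_0\colon\Gamma^\Transl,\lnot A^\Transl\vdash\bot$, the occurrences of $\lnot A^\Transl$ discharged by that $\Raa$ (possibly none) having become undischarged; if instead $\Pi''$ contains no $\Raa$, then $\pi_0=\Pi''\colon\Gamma^\Transl\vdash A^\Transl$ and appending an $\lnot_\Elim$ against a fresh open assumption $\lnot A^\Transl$ again produces $\Gamma^\Transl,\lnot A^\Transl\vdash_\mathsf{D}\bot$. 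In both cases $\Gamma^\Transl,\lnot A^\Transl\vdash_\mathsf{D}\bot$, and one instance of $\lnot_\Intro$ discharging all occurrences of $\lnot A^\Transl$ (none, if there are none) gives $\Gamma^\Transl\vdash_\mathsf{D}\lnot\lnot A^\Transl$.

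For part~\ref{thm:glivenko.nottranslated} I would assume that $\to$ and $\forall$ occur in neither $A$ nor $\Gamma$, so $A^\Transl=A$ and $\Gamma^\Transl=\Gamma$ by Remark~\ref{rmk:translation}.\ref{rmk:translation.identity}. Then part~\ref{thm:glivenko.translated} gives $\Gamma\vdash_\Nm\lnot\lnot A$ and $\Gamma,\lnot A\vdash_\Nm\bot$ directly (as $\mathsf{D}\subseteq\Nm$), i.e.\ (\ref{point:Nk})$\Rightarrow$(\ref{point:notnotNm}) and (\ref{point:Nk})$\Rightarrow$(\ref{point:botNm}). The equivalence (\ref{point:notnotNm})$\Leftrightarrow$(\ref{point:botNm}) is immediate: adjoin $\lnot A$ as an open assumption and apply $\lnot_\Elim$ to pass from $\lnot\lnot A$ to $\bot$; conversely, apply $\lnot_\Intro$ discharging the occurrences of $\lnot A$ to pass from $\bot$ to $\lnot\lnot A$. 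And (\ref{point:botNm})$\Rightarrow$(\ref{point:Nk}) follows since a derivation of $\bot$ from $\Gamma,\lnot A$ lives in $\Nm\subseteq\Nk$ and an instance of $\Raa$ discharging the occurrences of $\lnot A$ concludes $A$ from $\Gamma$; so (\ref{point:Nk})--(\ref{point:botNm}) are equivalent. For the final assertion, $\Gamma\vdash_\Nm\lnot B$ implies $\Gamma\vdash_\Nk\lnot B$ trivially; conversely, if $\Gamma\vdash_\Nk\lnot B$, then, $\to$ and $\forall$ occurring in neither $\lnot B$ nor $\Gamma$, the equivalence (\ref{point:Nk})$\Leftrightarrow$(\ref{point:botNm}) with $A=\lnot B$ gives $\Gamma,\lnot\lnot B\vdash_\Nm\bot$; substituting for each undischarged occurrence of $\lnot\lnot B$ the subderivation that obtains $\lnot\lnot B$ from a fresh open assumption $B$ (assume $\lnot B$, apply $\lnot_\Elim$, then $\lnot_\Intro$ discharging $\lnot B$) turns this into $\Gamma,B\vdash_\Nm\bot$, and $\lnot_\Intro$ discharging the occurrences of $B$ gives $\Gamma\vdash_\Nm\lnot B$.

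The hard part will be the bookkeeping flagged in the second paragraph: certifying that the \Mstandard\ derivation supplied by Corollary~\ref{cor:postponement}.\ref{cor:postponement.minimal} really lives in $\mathsf{D}$, that is, uses no $\to$- or $\forall$-rule even above its terminal $\Raa$. This combines the subformula property of Stålmarck's classical normal form (already used in the proof of Corollary~\ref{cor:postponement}) with the observation, read off from \S\ref{sect:rewrite}, that the postponement reductions never create a $\to$- or $\forall$-rule out of nothing. The degenerate case in which the terminal $\Raa$ discharges no assumption (so is in fact an $\Efq$) is harmless, since $\lnot_\Intro$ may discharge zero assumptions, so $\lnot\lnot A^\Transl$ is still obtained in $\mathsf{D}$ from a $\mathsf{D}$-derivation of $\bot$.
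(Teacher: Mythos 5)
Your argument is correct, and its skeleton is the paper's: translate via Lemma~\ref{lemma:translation}, postpone $\Raa$, replace the terminal $\Raa$ (if any) by $\lnot_\Intro$, and read part~\ref{thm:glivenko.nottranslated} off part~\ref{thm:glivenko.translated} through Remark~\ref{rmk:translation}.\ref{rmk:translation.identity}. You diverge genuinely in one step: how the derivations of $\lnot\lnot A^\Transl$ and of $\bot$ are forced into $\mathsf{D} = \Nm \smallsetminus \{\to_\Intro, \to_\Elim, \forall_\Intro, \forall_\Elim\}$ rather than merely into $\Nm$. The paper first lands in full $\Nm$ and then invokes, as a black box, Prawitz's normalization theorem and subformula property \emph{for $\Nm$}, using Remark~\ref{rmk:translation}.\ref{rmk:translation.nobadconnectives} to conclude that some normal derivation of the same sequents avoids the four rules. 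You instead track rule-absence through the whole construction: the Stålmarck-normal derivation inside the proof of Corollary~\ref{cor:postponement} is $\{\to,\forall\}$-rule-free by the subformula principle, and the reduction steps of \S\ref{sect:rewrite} never create a $\to$- or $\forall$-rule in a derivation lacking them. That last claim is true but goes beyond Lemma~\ref{lemma:preservation}.\ref{lemma:preservation.connective}, which only covers $\to_\Intro$ and $\forall_\Intro$; you rightly flag that the elimination rules require a separate (routine) inspection, and you must also open up the proof of Corollary~\ref{cor:postponement} rather than use its statement as given. The trade-off: your route avoids a second normalization pass and produces the $\mathsf{D}$-derivation explicitly, at the cost of extra bookkeeping; the paper's is shorter but non-constructive at that step. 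Your treatment of the final assertion for $A = \lnot B$ (composing with $B \vdash \lnot\lnot B$ and re-applying $\lnot_\Intro$) is a substitution-style rendering of the same triple-negation reduction the paper implements via the minimal theorem $\lnot\lnot\lnot B \to \lnot B$; both are fine.
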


\begin{proof}\hfill
  \begin{enumerate}
    \item Since $\Gamma \vdash A$ is derivable in $\Nk$, according to Lemma~\ref{lemma:translation}, there exists a derivation $\Pi \colon \Gamma^\Transl \vdash A^\Transl$ in $\Nk \smallsetminus \{\to_\Intro, \forall_\Intro\}$, and by Theorem~\ref{thm:postponement}.\ref{thm:postponement.minimal}, there exists a derivation $\Pi' \colon \Gamma^\Transl \vdash A^\Transl$ in $\Nk \smallsetminus \{\to_\Intro, \forall_\Intro\}$ with at most one instance of the rule $\Raa$: this instance, if any, is the last rule of $\Pi'$, the rest of $\Pi'$ being a derivation in $\Nm$.
    Only two cases are possible:
    \begin{itemize}
      \item either the last rule of $\Pi'$ is not an instance of $\Raa$, and thus $\Pi'$ is a derivation in $\Nm$, so that $\Pi'' \colon \Gamma^\Transl \vdash \lnot \lnot A^\Transl$ and $\Pi''' \colon \Gamma^\Transl, \lnot A^\Transl \vdash \bot$ are derivations in $\Nm$, where:
      {\small
      \begin{align*}
	\Pi'' \ &= \ 
	\begin{prooftree}
	  \Hypo{}
	  \Ellipsis{$\Pi'$}{A^\Transl}
	  \Hypo{\ulcorner \lnot A^\Transl \urcorner^1}
	  \Infer2[\small$\lnot_\Elim$]{\; \bot \;}
	  \Infer1[\small$\lnot_\Intro^1$]{\lnot\lnot A^\Transl}
	\end{prooftree}
	&&\text{and}&
	\Pi''' \ &= \ 
	\begin{prooftree}
	  \Hypo{}
	  \Ellipsis{$\Pi'$}{A^\Transl}
	  \Hypo{\lnot A^\Transl}
	  \Infer2[\small$\lnot_\Elim$]{\; \bot \;}
	\end{prooftree}\ ;
      \end{align*}
      }
        
        \item or the last rule of $\Pi'$ is an instance of $\Raa$, \Ie
        \begin{equation*}
	  \Pi' =
	  \begin{prooftree}
	    \Hypo{\ulcorner \lnot A^\Transl \urcorner^1}
	    \Ellipsis{$\pi$}{\; \bot \;}
	    \Infer1[\footnotesize$\Raa^1$]{A^\Transl}
	  \end{prooftree}
	\end{equation*}
	where $\pi \colon \Gamma^\Transl, \lnot A^\Transl \vdash \bot$ is a derivation in $\Nm$. 
	So, $\Pi'' \colon \Gamma^\Transl \vdash \lnot \lnot A^\Transl$ is a derivation in $\Nm$ where $\Pi''$ is obtained from $\Pi'$ by replacing the instance of $\Raa$ with an instance of $\lnot_\Intro$ discharging the same assumptions, \Ie
        \begin{equation*}
	  \Pi'' =
	  \begin{prooftree}
	    \Hypo{\ulcorner \lnot A^\Transl \urcorner^1}
	    \Ellipsis{$\pi$}{\; \bot \;}
	    \Infer1[\footnotesize$\lnot_\Intro^1$]{\lnot \lnot A^\Transl}
	  \end{prooftree} \ .
	\end{equation*}
      \end{itemize}
      
      We have thus proved that $\Gamma^\Transl \vdash \lnot \lnot A^\Transl$ and $\Gamma^\Transl, \lnot A^\Transl \vdash \bot$ are derivable in $\Nm$.
      According to Remark~\ref{rmk:translation}.\ref{rmk:translation.nobadconnectives}, neither $A^\Transl$ nor any formula in $\Gamma^\Transl$ contain  occurrences of $\to$ and $\forall$; hence, according to the normalization theorem and the subformula property for $\Nm$ \cite[p.~53]{Prawitz65}, $\Gamma^\Transl \vdash \lnot \lnot A^\Transl$ and $\Gamma^\Transl, \lnot A^\Transl \vdash \bot$ are derivable in $\Nm \smallsetminus \{\to_\Intro, \to_\Elim, \forall_\Intro, \forall_\Elim\}$.
    
    \item 
    \begin{description}
      \item [\eqref{point:Nk} implies \eqref{point:notnotNm}:] By Theorem~\ref{thm:glivenko}.\ref{thm:glivenko.translated}, since $\Gamma \vdash A$ is derivable in $\Nk$, there is a derivation $\Pi \colon \Gamma^\Transl \vdash \lnot \lnot A^\Transl$ in $\Nm$.
      According to Remark~\ref{rmk:translation}.\ref{rmk:translation.identity}, $\Gamma^\Transl = \Gamma$ and $A^\Transl = A$.
      So, $\Pi \colon \Gamma \vdash \lnot \lnot A$ (in $\Nm$).

      \item [\eqref{point:notnotNm} implies \eqref{point:botNm}:] If $\Pi \colon \Gamma \vdash \lnot \lnot A$ is a derivation in $\Nm$, then $\Pi' \colon \Gamma, \lnot A \vdash \bot$ is a derivation in $\Nm$, where
      {\small
      \begin{align*}
	\Pi' \ &= \ 
	\begin{prooftree}
	  \Hypo{}
	  \Ellipsis{$\Pi$}{\lnot \lnot A}
	  \Hypo{\lnot A}
	  \Infer2[\small$\lnot_\Elim$]{\; \bot \;}
	\end{prooftree}\ .
      \end{align*}
      }
      
      \item [\eqref{point:botNm} implies \eqref{point:Nk}:] Since $\Nm \subseteq \Nk$, if $\Pi \colon \Gamma, \lnot A \vdash \bot$ is a derivation in $\Nm$, then $\Pi$ is a derivation in $\Nk$. 
      Therefore, $\Pi' \colon \Gamma \vdash A$ is a derivation in $\Nk$, where
      {\small
      \begin{equation*}
	\Pi' \ = \ 
	\begin{prooftree}
	  \Hypo{\ulcorner \lnot A \urcorner^1}
	  \Ellipsis{$\Pi$}{\; \bot \;}
	  \Infer1[\footnotesize$\Raa^1$]{A}
	\end{prooftree}.
      \end{equation*}
      }
    \end{description}

    This proves the equivalences: \eqref{point:Nk} iff \eqref{point:notnotNm} iff \eqref{point:botNm}.
    
    Suppose now that moreover $A = \lnot B$.
	We show that  $\Gamma \vdash_\Nk \lnot B$ if and only if $\Gamma \vdash_\Nm \lnot B$.
    \begin{description}
      \item %[$\Leftarrow$:]
      [if:]
      Every derivation $\pi \colon \Gamma \vdash_\Nm \lnot B$ is also a derivation in $\Nk$ because $\Nm \subseteq \Nk$.
      \item %[$\Rightarrow$:] 
      [only if:]
      Since $\Gamma \vdash_\Nk \lnot B$, there exists $\pi \colon \Gamma \vdash_\Nm \lnot \lnot \lnot B$ according to the implication \eqref{point:Nk}$\Rightarrow$\eqref{point:notnotNm} we have just proved.
      Therefore, $\pi' \colon \Gamma \vdash_\Nm \lnot B$ where 
      {\small
      \begin{equation*}
	\pi' \ = \ 
	\begin{prooftree}
	  \Hypo{}
	  \Ellipsis{$\pi$}{\lnot \lnot \lnot B}
	  \Hypo{\ulcorner \lnot\lnot\lnot B \urcorner^3}
	  \Hypo{\ulcorner \lnot B \urcorner^1}
	  \Hypo{\ulcorner B \urcorner^2}
	  \Infer2[\footnotesize$\lnot_\Elim$]{\bot}
	  \Infer1[\footnotesize$\lnot_\Intro^1$]{\lnot\lnot B}
	  \Infer2[\footnotesize$\lnot_\Elim$]{\bot}
	  \Infer1[\footnotesize$\lnot_\Intro^2$]{\lnot B}
	  \Infer1[\footnotesize$\to_\Intro^3$]{\lnot \lnot \lnot B \to \lnot B}
	  \Infer2[\footnotesize$\to_\Elim$]{\lnot B}
	\end{prooftree}.
      \end{equation*}
      }

    \end{description}
  \end{enumerate}

\vspace{-\baselineskip}
\end{proof}

\begin{thm}[Generalized Glivenko's theorem, intuitionistic version]\label{thm:glivenko-intuitionist}\hfill
  \begin{enumerate}
%     \item\label{thm:glivenko-intutionist.translated} If $\Gamma \vdash A$ is derivable in $\Nk$, then $\Gamma^\Translj \vdash \lnot \lnot A^\Translj$ and $\Gamma^\Translj, \lnot A^\Translj \vdash \bot$ are derivable in $\Nj \smallsetminus \{\forall_\Intro, \forall_\Elim\}$.
    \item\label{thm:glivenko-intutionist.translated} If $\Gamma \vdash_\Nk A$, then $\Gamma^\Translj \vdash_\mathsf{D} \lnot \lnot A^\Translj$ and $\Gamma^\Translj, \lnot A^\Translj \vdash_\mathsf{D} \bot$ where $\mathsf{D} = \Nj \smallsetminus \{\forall_\Intro, \forall_\Elim\}$.

    \item\label{thm:glivenko-intuitionist.nottranslated} If $\forall$ occurs neither in $A$ nor in any formula of $\Gamma$, then the following are equivalent: 

    \vspace{-.5\baselineskip}
%     \begin{multicols}{2}
%     \begin{enumerate}[label=(\alph*),ref=\alph*]
%       \item\label{point:Nkj} $\Gamma \vdash A$ is derivable in $\Nk$,
%       \item\label{point:notnotNj} $\Gamma \vdash \lnot \lnot A$ is derivable in $\Nj$,
%       \item\label{point:botNj} $\Gamma, \lnot A \vdash \bot$ is derivable in $\Nj$.
%     \end{enumerate}
%     \end{multicols}
    \begin{multicols}{3}
    \begin{enumerate}[label=(\alph*),ref=\alph*]
      \item\label{point:Nkj} $\Gamma \vdash_\Nk A$,
      \item\label{point:notnotNj} $\Gamma \vdash_\Nj \lnot \lnot A$,
      \item\label{point:botNj} $\Gamma, \lnot A \vdash_\Nj \bot$.
    \end{enumerate}
    \end{multicols}

    \vspace{-.5\baselineskip}
    If moreover $A = \lnot B$, then: $\Gamma \vdash_\Nk \lnot B$ if and only if $\Gamma \vdash_\Nj \lnot B$.
    \end{enumerate}
\end{thm}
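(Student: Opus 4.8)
The plan is to follow, essentially verbatim, the proof of Theorem~\ref{thm:glivenko}, replacing the minimal translation $(\cdot)^\Transl$ by the intuitionistic one $(\cdot)^\Translj$, the system $\Nm$ by $\Nj$, the notion of \Mstandard\ derivation by that of \Jstandard\ derivation, and the clause Theorem~\ref{thm:postponement}.\ref{thm:postponement.minimal} of the postponement theorem by Theorem~\ref{thm:postponement}.\ref{thm:postponement.intuitionistic}. For Theorem~\ref{thm:glivenko-intuitionist}.\ref{thm:glivenko-intutionist.translated}: from $\Gamma \vdash_\Nk A$, Lemma~\ref{lemma:translation} yields a derivation $\Pi'' \colon \Gamma^\Translj \vdash A^\Translj$ in $\Nk \smallsetminus \{\forall_\Intro\}$; Theorem~\ref{thm:postponement}.\ref{thm:postponement.intuitionistic} then reduces it to a \Jstandard\ derivation $\Pi' \colon \Gamma^\Translj \vdash A^\Translj$ in $\Nk \smallsetminus \{\forall_\Intro\}$, \Ie~one whose only possible \emph{discharging} instance of $\Raa$ is its last rule, the rest being a derivation in $\Nj$ (and which may still contain several non-final instances of $\Efq$, harmlessly, since $\Efq \in \Nj$). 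Exactly as in Theorem~\ref{thm:glivenko}.\ref{thm:glivenko.translated}, I then split into two cases: if the last rule of $\Pi'$ is not a discharging $\Raa$, then $\Pi'$ is a derivation in $\Nj$ and I extend it with a $\lnot_\Elim$ against the assumption $\lnot A^\Translj$ followed by a $\lnot_\Intro$; if the last rule is a discharging $\Raa$ discharging occurrences of $\lnot A^\Translj$, I replace it by a $\lnot_\Intro$ discharging the same assumptions. In either case I obtain $\Gamma^\Translj \vdash_\Nj \lnot\lnot A^\Translj$ and $\Gamma^\Translj, \lnot A^\Translj \vdash_\Nj \bot$. Finally, since by Remark~\ref{rmk:translation}.\ref{rmk:translation.nobadconnectives} neither $A^\Translj$ nor any formula of $\Gamma^\Translj$ contains an occurrence of $\forall$, Prawitz's normalization theorem and the subformula property for $\Nj$ let me discard the rules $\forall_\Intro$ and $\forall_\Elim$, so that both derivations live in $\mathsf{D} = \Nj \smallsetminus \{\forall_\Intro, \forall_\Elim\}$.

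For Theorem~\ref{thm:glivenko-intuitionist}.\ref{thm:glivenko-intuitionist.nottranslated} I would close the cycle \eqref{point:Nkj}$\Rightarrow$\eqref{point:notnotNj}$\Rightarrow$\eqref{point:botNj}$\Rightarrow$\eqref{point:Nkj}. The implication \eqref{point:Nkj}$\Rightarrow$\eqref{point:notnotNj} follows from Theorem~\ref{thm:glivenko-intuitionist}.\ref{thm:glivenko-intutionist.translated} together with Remark~\ref{rmk:translation}.\ref{rmk:translation.identity}, which gives $\Gamma^\Translj = \Gamma$ and $A^\Translj = A$ when $\forall$ does not occur. For \eqref{point:notnotNj}$\Rightarrow$\eqref{point:botNj}, extend a derivation of $\Gamma \vdash_\Nj \lnot\lnot A$ with a $\lnot_\Elim$ against the assumption $\lnot A$. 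For \eqref{point:botNj}$\Rightarrow$\eqref{point:Nkj}, use $\Nj \subseteq \Nk$ and extend a derivation of $\Gamma, \lnot A \vdash_\Nj \bot$ (hence in $\Nk$) with a $\Raa$ discharging the occurrences of $\lnot A$. For the final clause ($A = \lnot B$), the ``if'' direction is just $\Nj \subseteq \Nk$; for ``only if'', the already-proved implication \eqref{point:Nkj}$\Rightarrow$\eqref{point:notnotNj} gives $\Gamma \vdash_\Nj \lnot\lnot\lnot B$, and I collapse $\lnot\lnot\lnot B$ to $\lnot B$ using exactly the $\lnot_\Intro$/$\lnot_\Elim$/$\to_\Intro$/$\to_\Elim$ derivation displayed in the corresponding clause of Theorem~\ref{thm:glivenko} (all four rules belong to $\Nj$).

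I do not anticipate a real obstacle: the intuitionistic translation is in fact \emph{easier} to handle than the minimal one, since it leaves $\to$ untouched, so the hypothesis need only forbid $\forall$ (instead of $\forall$ and $\to$) and one invokes the weaker clause Theorem~\ref{thm:postponement}.\ref{thm:postponement.intuitionistic} of the postponement theorem. The single point worth noting --- the precise analogue of what one checks in Theorem~\ref{thm:glivenko} --- is that the \Jstandard\ derivation obtained from Theorem~\ref{thm:postponement}.\ref{thm:postponement.intuitionistic} can still contain non-final instances of $\Efq$; this is exactly why the target system in Theorem~\ref{thm:glivenko-intuitionist}.\ref{thm:glivenko-intutionist.translated} is $\Nj$ rather than $\Nm$, but it causes no difficulty since $\Efq$ is a rule of $\Nj$.
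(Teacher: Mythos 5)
Your proposal is correct and follows exactly the route the paper takes: its proof of Theorem~\ref{thm:glivenko-intuitionist} is literally stated as being analogous to that of Theorem~\ref{thm:glivenko}, with $(\cdot)^\Transl$ replaced by $(\cdot)^\Translj$, $\Nm$ by $\Nj$, Theorem~\ref{thm:postponement}.\ref{thm:postponement.minimal} by Theorem~\ref{thm:postponement}.\ref{thm:postponement.intuitionistic}, and the normalization/subformula property for $\Nm$ by that for $\Nj$. Your spelled-out case analysis (\Jstandard\ derivation ending or not in a discharging $\Raa$, harmless non-final $\Efq$ instances, the triple-negation collapse for $A = \lnot B$) matches the intended argument in every detail.
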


\begin{proof}
  The proof of each part of Theorem~\ref{thm:glivenko-intuitionist} is analogous to the proof of the respective part of Theorem~\ref{thm:glivenko} below, replacing $(\cdot)^\Transl$ with $(\cdot)^\Translj$, $\Nm$ with $\Nj$, $\Nm \smallsetminus \{\to_\Intro, \forall_\Intro\}$ with $\Nj \smallsetminus \{\forall_\Intro\}$, $\Nm \smallsetminus \{\to_\Intro, \to_\Elim, \forall_\Intro, \forall_\Elim\}$ with $\Nj \smallsetminus \{\forall_\Intro, \forall_\Elim\}$.
  To prove Theorem~\ref{thm:glivenko-intuitionist}.\ref{thm:glivenko-intutionist.translated} we use Theorem~\ref{thm:postponement}.\ref{thm:postponement.intuitionistic} instead of Theorem~\ref{thm:postponement}.\ref{thm:postponement.minimal}, and the normalization theorem and the subformula property for $\Nj$ instead of the normalization theorem and the subformula property for $\Nm$ (see \cite[p.~53]{Prawitz65}).
  To prove Theorem~\ref{thm:glivenko-intuitionist}.\ref{thm:glivenko-intuitionist.nottranslated} we use Theorem~\ref{thm:glivenko-intuitionist}.\ref{thm:glivenko-intutionist.translated} instead of Theorem~\ref{thm:glivenko}.\ref{thm:glivenko.translated}.
\end{proof}

An immediate consequence of Theorem~\ref{thm:glivenko-intuitionist}.\ref{thm:glivenko-intuitionist.nottranslated} (resp.~Theorem~\ref{thm:glivenko}.\ref{thm:glivenko.nottranslated}) is the next corollary: in the fragment $\{\bot, \top, \lnot, \lor, \land, \to, \exists\}$ (resp.~$\{\bot, \top, \lnot, \lor, \land, \exists\}$) of first-order logic, the consistency of a set of formulas in classical logic is equivalent to its consistency in intuitionistic (resp.~minimal) logic.
% TODO: Is it necessary that $\Gamma$ is finite? Can we use Compactness Theorem in intuitionistic or minimal logic?

\begin{cor}[Relative consistency of a theory]\label{cor:consistent} Let a theory be a set of formulas $\Gamma$.
  \begin{enumerate}
%     \item\label{cor:consistent.intuitionist} If $\forall$ does not occur in any formula of $\Gamma$, then one has that:
%   $\Gamma \vdash \bot$ is derivable in $\Nk$ if and only if $\Gamma \vdash \bot$ is derivable in $\Nj$.
    \item\label{cor:consistent.intuitionist} If $\forall$ does not occur in any formula of $\Gamma$, then:
  $\Gamma \vdash_\Nk \bot$ if and only if $\Gamma \vdash_\Nj \bot$.

%     \item\label{cor:consistent.minimal} If $\to$ and $\forall$ do not occur in any formula of $\Gamma$, then one has:
%   $\Gamma \vdash \bot$ is derivable in $\Nk$ if and only if $\Gamma \vdash \bot$ is derivable in $\Nm$.
    \item\label{cor:consistent.minimal} If $\to$ and $\forall$ do not occur in any formula of $\Gamma$, then:
  $\Gamma \vdash_\Nk \bot$ if and only if $\Gamma \vdash_\Nm \bot$.
  \end{enumerate}
\end{cor}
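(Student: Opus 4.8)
The plan is to derive Corollary~\ref{cor:consistent} directly as a special case of the Glivenko theorems already proven, taking $A = \bot$. Observe first that $\bot$ contains no occurrences of $\forall$ (nor of $\to$), so whatever hypothesis $\Gamma$ satisfies, the pair $(\Gamma, \bot)$ falls within the scope of the relevant ``nottranslated'' part of the Glivenko theorem. Concretely, for part~\ref{cor:consistent.intuitionist}, if $\forall$ does not occur in any formula of $\Gamma$, then neither $\forall$ nor anything problematic occurs in $\bot$ either, so Theorem~\ref{thm:glivenko-intuitionist}.\ref{thm:glivenko-intuitionist.nottranslated} applies with $A = \bot$: it gives the equivalence of $\Gamma \vdash_\Nk \bot$, $\Gamma \vdash_\Nj \lnot\lnot\bot$, and $\Gamma, \lnot\bot \vdash_\Nj \bot$. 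Similarly, for part~\ref{cor:consistent.minimal}, if neither $\to$ nor $\forall$ occurs in $\Gamma$, then Theorem~\ref{thm:glivenko}.\ref{thm:glivenko.nottranslated} applies with $A = \bot$, yielding the equivalence of $\Gamma \vdash_\Nk \bot$, $\Gamma \vdash_\Nm \lnot\lnot\bot$, and $\Gamma, \lnot\bot \vdash_\Nm \bot$.

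The remaining task is then a small bit of intuitionistic (indeed minimal) bookkeeping: I need to show that, over $\Nm$ (hence over $\Nj$ too), the statement $\Gamma \vdash \lnot\lnot\bot$ is equivalent to $\Gamma \vdash \bot$. One direction is trivial, since from a derivation of $\bot$ one obtains $\lnot\lnot\bot$ by applying $\lnot_\Intro$ (discharging nothing, or rather, weakening in a vacuous assumption $\lnot\bot$ and then applying $\lnot_\Intro^*$). For the converse, from a derivation of $\lnot\lnot\bot$ I want $\bot$; this follows because $\lnot\bot$ is derivable in $\Nm$ from no assumptions (namely $\lnot_\Intro^1$ discharging the assumption $\ulcorner\bot\urcorner^1$, whose conclusion is immediately $\bot$), so a single $\lnot_\Elim$ with $\lnot\lnot\bot$ as one premise and this derivation of $\lnot\bot$ as the other yields $\bot$. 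Hence $\Gamma \vdash_\Nm \lnot\lnot\bot$ iff $\Gamma \vdash_\Nm \bot$, and likewise for $\Nj$. Chaining this with the Glivenko equivalences gives exactly $\Gamma \vdash_\Nk \bot$ iff $\Gamma \vdash_\Nj \bot$ (resp.\ iff $\Gamma \vdash_\Nm \bot$).

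I do not anticipate any real obstacle here: the corollary is essentially a restatement of the ``nottranslated'' Glivenko theorems specialized to the case $A = \bot$, together with the observation that $\lnot\lnot\bot$ and $\bot$ are minimally interderivable. The only thing to be a little careful about is making sure the hypothesis on $\Gamma$ in each case is exactly the one needed to invoke the corresponding theorem — for the minimal version we genuinely need $\to$ to be absent from $\Gamma$ as well, since Theorem~\ref{thm:glivenko}.\ref{thm:glivenko.nottranslated} requires it, whereas the intuitionistic version only needs $\forall$ absent. Alternatively, and perhaps more cleanly, one can observe that the implication ``\ref{point:botNm}$\Rightarrow$\ref{point:Nk}'' (resp.\ the corresponding implication in the intuitionistic theorem) specialized to $A = \bot$, combined with the trivial inclusion $\Nm \subseteq \Nk$ (resp.\ $\Nj \subseteq \Nk$) and the fact that $\Gamma, \lnot\bot \vdash \bot$ is equivalent to $\Gamma \vdash \bot$ in minimal logic (discharge $\lnot\bot$ if present, or note $\lnot\bot$ is itself derivable), gives the result without even naming $\lnot\lnot\bot$. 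Either route is short; I would present whichever makes the LaTeX most compact.
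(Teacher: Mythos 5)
Your proposal is correct and follows essentially the same route as the paper: the paper specializes Theorem~\ref{thm:glivenko}.\ref{thm:glivenko.nottranslated} (resp.\ Theorem~\ref{thm:glivenko-intuitionist}.\ref{thm:glivenko-intuitionist.nottranslated}) to $A = \bot$, takes the form $\Gamma, \lnot\bot \vdash \bot$, and composes with the one-step derivation of $\lnot\bot$ obtained by $\lnot_\Intro$ discharging $\bot$ --- exactly your ``alternative, perhaps more cleanly'' route. Your primary route via the minimal interderivability of $\lnot\lnot\bot$ and $\bot$ is a trivially equivalent repackaging of the same idea.
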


\begin{proof}
  The proof of Corollary~\ref{cor:consistent}.\ref{cor:consistent.intuitionist} is analogous to the proof of Corollary~\ref{cor:consistent}.\ref{cor:consistent.minimal}: 
  it is sufficient to replace $\Nm$ with $\Nj$, and use Theorem~\ref{thm:glivenko-intuitionist}.\ref{thm:glivenko-intuitionist.nottranslated} instead of Theorem~\ref{thm:glivenko}.\ref{thm:glivenko.nottranslated}.
%   The proof of Corollary~\ref{cor:consistent}.\ref{cor:consistent.minimal} is the following:
  We now prove Corollary~\ref{cor:consistent}.\ref{cor:consistent.minimal}.
  \begin{description}
    \item %[$\Rightarrow$:] 
    [only if:]
    Since $\Gamma \vdash \bot$ is a derivable in $\Nk$, then by Theorem~\ref{thm:glivenko}.\ref{thm:glivenko.nottranslated} there is a derivation $\pi \colon \Gamma, \lnot \bot \vdash \bot$ in $\Nm$.
    So, $\Pi \colon \Gamma \vdash \bot$ is a derivation in $\Nm$ where
    {\small
    \begin{equation*}
    \Pi \ = \ 
    \begin{prooftree}
      \Hypo{\ulcorner \bot \urcorner^1}
      \Infer1[\footnotesize$\lnot_\Intro^1$]{\lnot \bot}
      \Ellipsis{$\pi$}{\bot}
    \end{prooftree} \ .
    \end{equation*}
    }
    \item %[$\Leftarrow$:] 
    [if:]
    Since $\Nm \subseteq \Nk$, every derivation $\pi \colon \Gamma \vdash \bot$ in $\Nm$ is also %a derivation 
    in $\Nk$.
    \qedhere
  \end{description}
\end{proof}

The fact that Theorem~\ref{thm:glivenko}.\ref{thm:glivenko.nottranslated} and Corollary~\ref{cor:consistent}.\ref{cor:consistent.minimal} (resp.~Theorem~\ref{thm:glivenko-intuitionist}.\ref{thm:glivenko-intuitionist.nottranslated} and Corollary~\ref{cor:consistent}.\ref{cor:consistent.intuitionist}) are restricted to the fragment $\{\bot, \top, \lnot, \allowbreak\land, \lor, \exists\}$ (resp.~$\{\bot, \top\!, \lnot,\allowbreak \land, \lor, \to, \exists\}$) of the first-order language of classical logic is not a limit because this fragment is equally expressive as full first-order classical logic (with respect to the derivability relation).

%our proofs are constructive: algorithm

For the sake of completeness, (a slightly strengthened version of) the converses of Theorems~\ref{thm:glivenko}.\ref{thm:glivenko.translated} and \ref{thm:glivenko-intuitionist}.\ref{thm:glivenko-intutionist.translated} also hold (the proof is straightforward and left to the reader):

\begin{prop}
\label{prop:inverse}
  Let $A$ be a formula and $\Gamma$ be a set of formulas.
  \begin{enumerate}
    \item\label{prop:inverse-minimal} If $\Gamma^\Transl \vdash_\Nm \lnot\lnot A^\Transl$ (or equivalently $\Gamma^\Transl, \lnot A^\Transl \vdash_\Nm \bot$), then $\Gamma \vdash_\Nk A$.
    \item\label{prop:inverse-intuitionist} If $\Gamma^\Translj \vdash_\Nj \lnot\lnot A^\Translj$ (or equivalently $\Gamma^\Translj, \lnot A^\Translj \vdash_\Nj \bot$), then $\Gamma \vdash_\Nk A$.
  \end{enumerate}
\end{prop}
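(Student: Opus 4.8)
The plan is to reduce the whole statement to a single routine fact about the translations of Definition~\ref{def:translation}: in $\Nk$ every formula is interderivable with each of its two translations. Concretely, I would first establish, by induction on the formula $C$, that there are derivations witnessing $C \vdash_\Nk C^\Transl$ and $C^\Transl \vdash_\Nk C$, and likewise $C \vdash_\Nk C^\Translj$ and $C^\Translj \vdash_\Nk C$; this is exactly the content of the induction underlying Remark~\ref{rmk:translation}.\ref{rmk:translation.interderivable}. For $\Var(t_1,\dots,t_n)$, $\bot$ and $\top$ the translations are the identity, so there is nothing to do; for $\lnot$, $\land$, $\lor$ and $\exists$ one grafts the inductive derivations above or below the relevant premiss of the corresponding introduction/elimination rule, the side-condition on $\exists_\Elim$ being met because $A$ and $A^\Transl$ (resp.\ $A^\Translj$) have the same free variables by Remark~\ref{rmk:translation}.\ref{rmk:translation.free}. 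The only genuinely classical cases are $\to$ and $\forall$ for $(\cdot)^\Transl$, and $\forall$ for $(\cdot)^\Translj$: here one invokes that $A \to B$ is $\Nk$-interderivable with $\lnot A \lor B$ and that $\forall x A$ is $\Nk$-interderivable with $\lnot \exists x \lnot A$, both immediate with one or two applications of $\Raa$ (and, for $\forall$, again using Remark~\ref{rmk:translation}.\ref{rmk:translation.free} for the eigenvariable conditions on $\exists_\Intro$ and $\exists_\Elim$). This is the only step with any content, and it is entirely routine.

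Granting this lemma, I would argue item~\ref{prop:inverse-minimal} as follows. The two hypotheses are equivalent already in $\Nm$: from $\lnot\lnot A^\Transl$ together with an assumption $\lnot A^\Transl$ one gets $\bot$ by $\lnot_\Elim$, and conversely $\lnot_\Intro$ discharging $\lnot A^\Transl$ turns a derivation of $\Gamma^\Transl, \lnot A^\Transl \vdash_\Nm \bot$ into one of $\Gamma^\Transl \vdash_\Nm \lnot\lnot A^\Transl$. So suppose $\Gamma^\Transl \vdash_\Nm \lnot\lnot A^\Transl$. Since $\Nm \subseteq \Nk$ this is also a derivation in $\Nk$, and appending below it one step of $\Raa$ (assume $\lnot A^\Transl$, derive $\bot$ by $\lnot_\Elim$ against $\lnot\lnot A^\Transl$, then discharge) gives $\Gamma^\Transl \vdash_\Nk A^\Transl$. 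Now, above each undischarged assumption occurrence of the form $B^\Transl$ with $B \in \Gamma$, graft an eigenvariable-renamed copy of the derivation $B \vdash_\Nk B^\Transl$ supplied by the lemma; no eigenvariable condition of the ambient derivation is violated, since by Remark~\ref{rmk:translation}.\ref{rmk:translation.free} the grafting introduces no new free variables. The result witnesses $\Gamma \vdash_\Nk A^\Transl$, and composing it below with the lemma's derivation $A^\Transl \vdash_\Nk A$ yields $\Gamma \vdash_\Nk A$, as wanted.

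Item~\ref{prop:inverse-intuitionist} is then proved in exactly the same way, replacing $(\cdot)^\Transl$ by $(\cdot)^\Translj$ and $\Nm$ by $\Nj$ throughout, and using the $(\cdot)^\Translj$ half of the lemma. I do not expect any obstacle past the interderivability lemma: everything after it is bookkeeping with proof composition and a single use of $\Raa$. The only place demanding a little care is the lemma itself, and within it the eigenvariable bookkeeping for the $\forall$ and $\exists$ clauses, which Remark~\ref{rmk:translation}.\ref{rmk:translation.free} is designed to handle.
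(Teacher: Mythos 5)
Your argument is correct and is exactly the ``straightforward'' proof the paper alludes to when it leaves Proposition~\ref{prop:inverse} to the reader: pass from $\lnot\lnot A^\Transl$ (or from $\Gamma^\Transl,\lnot A^\Transl\vdash\bot$) to $A^\Transl$ by one instance of $\Raa$, then use the $\Nk$-interderivability of each formula with its translation (the induction behind Remark~\ref{rmk:translation}.\ref{rmk:translation.interderivable}) to replace $\Gamma^\Transl$ by $\Gamma$ and $A^\Transl$ by $A$. The eigenvariable bookkeeping you flag is handled correctly by Remark~\ref{rmk:translation}.\ref{rmk:translation.free} and renaming, so there is nothing to add.
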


%\end{document} %6

%\documentclass[main.tex]{subfiles} %Section 7
% 
%\begin{document}
\section{Conclusion}
\label{sect:conclusion}

The literature concerning the connections between classical and constructive logic is extremely rich and prolific. Our aim in this paper was to give a sort of unifying view of some of these results, by adopting a proof-theoretic perspective, and, in particular, by focusing on a very specific technique: that of postponing the application of the rule of \textit{reduction ad absurdum} in the proofs of classical logic.

After having sketched the evolution of this technique starting from the seminal work of Prawitz in his monograph on natural deduction \cite{Prawitz65}, we have focused our attention on a particular strategy of postponement: the one adopted by Seldin \cite{Seldin86}. The interest of this strategy is that it can be characterized in a sort of geometrical way. In this sense, we proposed a modified version of it by reasoning only on the distance from the conclusion of the instances of $\Raa$ present in a certain derivation, and we left aside any consideration on the syntactic structure of the formulas introduced by the instances of $\Raa$. This insensitivity to syntactic considerations makes the technique extensible to logic systems going beyond first-order classical logic, like modal classical logic and, especially, second-order classical logic.%
\footnote{Clearly, in second order classical logic, the postponement of $\Raa$ does not also imply the normalization theorem.} 
We also conjecture that, even if our postponing strategy is a weak one, it is possible to transform it into a strong one, in the sense that the order of application of our reduction steps is not essential: any order of application should allow one to push $\Raa$ downward with respect to all the other rules.

The other aspect on which we focused our attention is the possibility of extracting some constructive content from the postponing strategy that we presented. In particular, we have been able to obtain Glivenko’s theorem in a uniform form, that is, working both for intuitionistic and minimal logic. As for the postponement of $\Raa$, it should not be difficult to extend it to systems that go beyond first-order logic, such as modal logic and second-order logic.

Finally, since the proof of Glivenko's theorem rests on the use of a negative translation, and since negative translation is closely related to the \textit{continuation-passing style} (CPS) transformations in functional programming, it would be interesting to investigate which is the proper computational interpretation (in terms of $\lambda$-calculus) that can be assigned to the negative translation induced by our postponing strategy. In particular, since our translation of classical logic into intuitionistic logic is just a variant of the Kuroda translation, it is reasonable to expect that our translation simulates a call-by-value evaluation strategy in a call-by-name interpreter (see \cite[p. 255]{FerreiraOliva12}, \cite[p. 158 ff.]{Murthy90}). And since we can define also a translation of classical logic into minimal logic, it would be interesting to understand whether this second translation generates a different CPS transformation or not.

\bibliographystyle{sl}
\bibliography{biblio}

\begin{thebibliography}{10}

\bibitem{Andou95}
\textsc{Andou, Y.}, `{A normalization-procedure for the first order classical
  natural deduction with full logical constants}', \emph{Tsukuba Journal of
  Mathematics}, 19 (1995), 153--162.

\bibitem{AriolaHerbelinSabry05}
\textsc{Ariola, Z.~M.}, \textsc{H.~Herbelin}, and \textsc{A.~Sabry}, `{A
  proof-theoretic foundation of abortive continuations}', \emph{Higher-Order
  and Symbolic Computation}, 20 (2007), 4, 403--429.

\bibitem{BrownRizkallah14}
\textsc{Brown, C.~E.}, and \textsc{C.~Rizkallah}, `{Glivenko and Kuroda for
  Simple Type Theory}', \emph{The Journal of Symbolic Logic}, 79 (2014), 2,
  485--495.

\bibitem{deGroote01}
\textsc{de~Groote, P.}, `{Strong normalization of classical natural deduction
  with disjunction}', in S.~Abramsky, (ed.), \emph{{Typed Lambda Calculi and
  Applications. 5th International Conference, TLCA 2001, Proceedings}}, vol.
  2044 of \emph{{Lecture Notes in Computer Science}}, Springer, 2001, pp.
  182--196.

\bibitem{ErtolaSagastume08}
\textsc{Ertola, R.}, and \textsc{M.~Sagastume}, `{Subminimal logic and weak
  algebras}', \emph{Reports on Mathematical Logic}, 44 (2009), 153--166.

\bibitem{Espindola13}
\textsc{Esp{\'i}ndola, C.}, `{A short proof of Glivenko theorems for
  intermediate predicate logics}', \emph{Archive for Mathematical Logic}, 52
  (2013), 7, 823--826.

\bibitem{FarahaniOno12}
\textsc{Farahani, H.}, and \textsc{H.~Ono}, `{Glivenko theorems and negative
  translations in substructural predicate logics}', \emph{Archive for
  Mathematical Logic}, 51 (2012), 7, 695--707.

\bibitem{FerreiraOliva12}
\textsc{Ferreira, G.}, and \textsc{P.~Oliva}, `{On the relation between various
  negative translations}', in U.~Berger, H.~Diener, P.~Schuster, and
  M.~Seisenberger, (eds.), \emph{{Logic, Construction, Computation}}, vol.~3 of
  \emph{{Ontos-Verlag Mathematical Logic Series}}, De Gruyter, 2012, pp.
  227--258.

\bibitem{GalatosOno06}
\textsc{Galatos, N.}, and \textsc{H.~Ono}, `{Glivenko theorems for
  substructural logics over FL}', \emph{Journal of Symbolic Logic}, 71 (2006),
  1353--1384.

\bibitem{Glivenko29}
\textsc{Glivenko, V.}, `{Sur quelques points de la logique de M. Brouwer}',
  \emph{Bulletins de la Classe des Sciences}, 15 (1929), 5, 183--188.

\bibitem{Kleene52}
\textsc{Kleene, S.~C.}, \emph{{Introduction to metamathematics}},
  North-Holland, 1952.

\bibitem{Kuroda51}
\textsc{Kuroda, S.}, `{Intuitionistische untersuchungen der formalistischen
  logik}', \emph{Nagoya Mathematical Journal}, 2 (1951), 35--47.

\bibitem{Murthy90}
\textsc{Murthy, C.}, \emph{{Extracting Constructive Content From Classical
  Proofs}}, Ph.D. thesis, Cornell University, 1990.

\bibitem{Ono09}
\textsc{Ono, H.}, `{Glivenko theorems revisited}', \emph{Annals of Pure and
  Applied Logic}, 161 (2009), 2, 246--250.

\bibitem{Pereira00}
\textsc{Pereira, L.~C.}, `{Translations and normalization procedures
  (abstract)}', in \emph{{WoLLIC'2000 -- 7th Workshop on Logic, Language,
  Information and Computation}}, 2000, pp. 21--24. Available at
  \url{www.cin.ufpe.br/~wollic/wollic2000/proceedings/anais.ps.gz}.

\bibitem{PereiraHaeusler15}
\textsc{Pereira, L.~C.}, and \textsc{E.~H. Haeusler}, `{On constructive
  fragments of classical logic}', in Heinrich Wansing, (ed.), \emph{{Dag
  Prawitz on Proofs and Meaning}}, vol.~7 of \emph{{Outstanding Contributions
  to Logic}}, Springer, Berlin, 2015, pp. 281--292.

\bibitem{Prawitz65}
\textsc{Prawitz, D.}, \emph{{Natural Deduction: A proof-theoretical study}},
  Almqvist \& Wiksell, Stockholm, 1965.

\bibitem{Prawitz71}
\textsc{Prawitz, D.}, `{Ideas and results in proof theory}', in Jens~Erik
  Fenstad, (ed.), \emph{{Proceedings of the Second Scandinavian Logic
  Symposium}}, {Studies in Logic and Foundations of Mathematics},
  North-Holland, Amsterdam, 1971, pp. 237--309.

\bibitem{Schroeder-HeisterOlkhovikov14}
\textsc{Schroeder-Heister, P.}, and \textsc{G.~K. Olkhovikov}, `{On flattening
  elimination rules}', \emph{The Review of Symbolic Logic}, 7 (2014), 1,
  60--72.

\bibitem{Seldin86}
\textsc{Seldin, J.~P.}, `{On the proof theory of the intermediate logic MH}',
  \emph{The Journal of Symbolic Logic}, 51 (1986), 3, 626--647.

\bibitem{Seldin89}
\textsc{Seldin, J.~P.}, `{Normalization and excluded middle. I}', \emph{Studia
  Logica}, 48 (1989), 2, 193--217.

\bibitem{Stalmarck91}
\textsc{St{\aa}lmarck, G.}, `{Normalization theorems for full first order
  classical natural deduction}', \emph{The Journal of Symbolic Logic}, 56
  (1991), 1, 129--149.

\bibitem{Statman74}
\textsc{Statman, R.}, \emph{{Structural Complexity of Proofs}}, Ph.D. thesis,
  Stanford University, 1974.

\bibitem{Tennant87}
\textsc{Tennant, N.}, \emph{{Anti-Realism and Logic: Truth as eternal}}, Oxford
  University Press, Oxford, 1987.

\bibitem{TroelstraSchwichtenberg00}
\textsc{Troelstra, A.~S.}, and \textsc{H.~Schwichtenberg}, \emph{{Basic Proof
  Theory}}, 2nd edn., no.~43 in {Cambridge Tracts in Theoretical Computer
  Science}, Cambridge University Press, 2000.

\bibitem{vonPlato01}
\textsc{von Plato, J.}, `{Natural deduction with general elimination rules}',
  \emph{Archive for Mathematical Logic}, 40 (2001), 7, 541--567.

\bibitem{vonPlato13}
\textsc{von Plato, J.}, \emph{{Elements of Logical Reasoning}}, Cambridge
  University Press, 2013.

\bibitem{vonPlato16}
\textsc{von Plato, J.}, `{Explicit composition and its application in proofs of
  normalization}', in Thomas Piecha, and Peter Schroeder-Heister, (eds.),
  \emph{{Advances in Proof-Theoretic Semantics}}, vol.~43 of \emph{{Trends in
  Logic}}, Springer, 2016, pp. 139--152.

\bibitem{NegrivonPlato01}
\textsc{von Plato, J.}, and \textsc{S.~Negri}, \emph{{Structural Proof
  Theory}}, Cambridge University Press, 2001.

\bibitem{NegrivonPlato11}
\textsc{von Plato, J.}, and \textsc{S.~Negri}, \emph{{Proof Analysis: A
  contribution to Hilbert's last problem}}, Cambridge University Press, 2011.

\bibitem{vonPlatoSiders12}
\textsc{von Plato, J.}, and \textsc{A.~Siders}, `{Normal derivability in
  classical natural deduction}', \emph{The Review of Symbolic Logic}, 5 (2012),
  205--211.

\bibitem{Zdanowski09}
\textsc{Zdanowski, K.}, `{On second order intuitionistic propositional logic
  without a universal quantifier}', \emph{The Journal of Symbolic Logic}, 74
  (2009), 1, 157--167.

\end{thebibliography}

\addcontentsline{toc}{section}{References}%

\newpage
%%%%%%%%%%%%%%%%%%%%%%%%%%%%%%%%%%%%%%%%%%%%%%%%
\appendix
%%%%%%%%%%%%%%%%%%%%%%%%%%%%%%%%%%%%%%%%%%%%%%%%

 	\section{Appendix}

%\documentclass[main.tex]{subfiles} %Appendix A1
% 
%\begin{document}
\subsection{Hidden negative translations}
\label{sect:hidnegativetrans}

In St{\aa}lmarck's reduction for $\vee$ classical detours -- see (\ref{stalsol}) -- the application of $\Raa$ used for introducing the formula $A \vee B$ disappears, and it is not replaced by any application of $\Raa$ on (one of) the subformulas $A$ and $B$, as it happens, instead, in the case of conjunction. It is thus not possible, in the reduced derivation of (\ref{stalsol}), to operate the substitution of $\Raa$ with a $\neg_\Intro$ in order to obtain $\neg A$ or $\neg B$. It seems, then, that the procedure used for obtaining a negative translation from the normalization of classical logic cannot be applied in the case of disjunction. However, in this reduced derivation, the $\vee_\Elim$ is used in a restricted way, namely with $C = \bot$, \Ie

\begin{center}
\begin{prooftree}
	\Hypo{A \vee B}
	\Hypo{\ulcorner A \urcorner^{1}}
	\Ellipsis{$\pi'$}{\;\bot\;}
	\Hypo{\ulcorner B \urcorner^{1}}
	\Ellipsis{$\pi''$}{\;\bot\;}
	\Infer3[\footnotesize$\vee_\Elim^{1}$]{\;\bot\;}
\end{prooftree}
\end{center}
 
\noindent Now, from the two subderivations $\pi'$ and $\pi''$ it is possible to obtain the formula $\neg A \wedge \neg B$:

\begin{center}
\begin{prooftree}
	\Hypo{\ulcorner A \urcorner^{1}}
	\Ellipsis{$\pi'$}{\;\bot\;}
	\Infer1[\footnotesize$\neg_\Intro^{1}$]{\neg A}
	\Hypo{\ulcorner B \urcorner^{2}}
	\Ellipsis{$\pi''$}{\;\bot\;}
	\Infer1[\footnotesize$\neg_\Intro^{2}$]{\neg B}
	\Infer2[\footnotesize$\wedge_\Intro$]{\neg A \wedge \neg B}
\end{prooftree}
\end{center}

\noindent And since $A \vee B$ and $\neg A \wedge \neg B$ are contradictory in minimal logic, one can infer $\neg (\neg A \wedge \neg B)$. This means that, in the process of normalization, $A \vee B$ behaves like $\neg (\neg A \wedge \neg B)$. There is thus no substantial difference to Prawitz's treatment of disjunction, which is defined from the beginning by means of negation and conjunction (see p.~\pageref{statrans}, \textit{supra}).

It is worth noting that this is not a peculiarity of the $\vee_\Elim$ rule. It can be claimed that all general elimination rules hide a sort of negative translation. According to Schroeder-Heister and Olkhovikov \cite{Schroeder-HeisterOlkhovikov14}, it is possible to flatten the general elimination rules used by von Plato and Siders (see p.~\pageref{genelim}, \textit{supra}) by translating them into formulas of second-order propositional logic. 
In particular, the $\wedge_\Elim$ rule becomes the formula $\forall{X}(((A \wedge B) \to X) \to X)$, the $\vee_\Elim$ rule becomes $\forall{X}(((A \to X) \wedge (B \to X)) \to X)$, and the $\to_\Elim$ rule becomes $\forall{X}((A \wedge (B \to X)) \to X)$. Then, by instantiating $X$ with $\bot$, the $\wedge_\Elim$ rule gets associated with $\neg \neg (A \wedge B)$, the $\vee_\Elim$ rule with $\neg (\neg A \wedge \neg B)$, and the $\to_\Elim$ rule with $\neg (A \wedge \neg B)$. Disjunction and implication are then treated like in the Gödel-Gentzen's translation.

%\end{document}
 %A1

%\documentclass[main.tex]{subfiles} %Appendix A2
% 
%\begin{document}
 \subsection{Tennant's proof of the Glivenko's theorem}
\label{sect:tennant}

In \cite[pp. 266--274]{Tennant87}, Tennant gives a proof of Glivenko's theorem for minimal and intuitionistic logic by using a double-negation translation that allows him to transform classical rules into derivable rules of minimal and intuitionistic logic. 

What he proves is the following theorem:

\begin{thm}\label{thm:tennant}
\begin{enumerate}
Let $\Gamma$ be a set of formulas, and $(\cdot)^{*}$ be a function that, when applied to $\Gamma$, adds two negations ($\neg \neg$) in front of any formula of $\Gamma$.
\item Every derivation $\Pi : \Gamma \vdash A$ in $\mathsf{D_c}$ can be converted into a derivation $\Pi' : (\Gamma)^{*} \vdash (A)^{*}$ in $\mathsf{D_m}$, where $\mathsf{D_c} = \Nk \smallsetminus \{\to_\Intro, \forall_\Intro\}$ and $\mathsf{D_m} = \Nm \smallsetminus \{\to_\Intro, \forall_\Intro\}$.
\item Every derivation $\Pi : \Gamma \vdash A$ in $\mathsf{D_c}$ can be converted into a derivation $\Pi' : (\Gamma)^{*} \vdash (A)^{*}$ in $\mathsf{D_i}$, where $\mathsf{D_c} = \Nk \smallsetminus \{\forall_\Intro\}$ and $\mathsf{D_m} = \Nj \smallsetminus \{\forall_\Intro\}$.
\end{enumerate}
\end{thm}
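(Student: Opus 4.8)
The plan is to follow Tennant's rule-by-rule method. Read $(\cdot)^*$ as the operation that prepends a double negation to an arbitrary formula, $(B)^* = \lnot\lnot B$ (so $(\bot)^* = \lnot\lnot\bot$, $(\lnot B)^* = \lnot\lnot\lnot B$, $(A\land B)^* = \lnot\lnot(A\land B)$, and so on), and $\Gamma^* = \{B^* \mid B \in \Gamma\}$. The argument proceeds by induction on the derivation $\Pi$ in $\mathsf{D_c}$, the content of the induction step being: for each inference rule $\mathsf{r}$ of $\mathsf{D_c}$, once every formula displayed in $\mathsf{r}$ is hit by $(\cdot)^*$, the rule so translated is \emph{derivable} in the target system. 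Concretely, given by the induction hypothesis $(\cdot)^*$-translated derivations of the premisses of $\mathsf{r}$, with undischarged assumptions among $\Gamma^*$, one plugs them into a fixed ``gadget'' derivation producing the $(\cdot)^*$-translated conclusion and adding no new undischarged assumption; assembling these gadgets along the tree $\Pi$ yields $\Pi'$.

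Building the gadgets reduces to a short list of facts about double negation, all provable in minimal logic without $\to_\Intro$ or $\forall_\Intro$: (i) $B \vdash_\Nm \lnot\lnot B$; (ii) $\lnot\lnot\lnot B \vdash_\Nm \lnot B$, hence $\lnot\lnot\lnot\lnot B \vdash_\Nm \lnot\lnot B$; (iii) $\lnot\lnot\bot \vdash_\Nm \bot$ and $\bot \vdash_\Nm \lnot\lnot B$; (iv) double negation is compatible with the remaining rules in the sense that $\lnot\lnot A, \lnot\lnot B \vdash_\Nm \lnot\lnot(A\land B)$, $\lnot\lnot A \vdash_\Nm \lnot\lnot(A\lor B)$ and symmetrically, $\lnot\lnot A\Sub{t}{x} \vdash_\Nm \lnot\lnot\exists x A$, $\lnot\lnot(A\land B) \vdash_\Nm \lnot\lnot A$, $\lnot\lnot(A\to B), \lnot\lnot A \vdash_\Nm \lnot\lnot B$, $\lnot\lnot\forall x A \vdash_\Nm \lnot\lnot A\Sub{t}{x}$, and (for $\lor_\Elim$ and $\exists_\Elim$) one may reason by cases under a double negation. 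The classical rule $\Raa$ — whose non-discharging instances are precisely the $\Efq$ steps of $\Nk$ — is itself absorbed: from $\Gamma^*, \lnot\lnot\lnot A \vdash \lnot\lnot\bot$ given by the induction hypothesis, (iii) gives $\Gamma^*, \lnot\lnot\lnot A \vdash \bot$, then $\lnot_\Intro$ gives $\Gamma^* \vdash \lnot\lnot\lnot\lnot A$, and (ii) gives $\Gamma^* \vdash \lnot\lnot A = (A)^*$; the $\Efq$ case uses $\bot \vdash_\Nm \lnot\lnot B$ instead. The eigenvariable side conditions of $\forall_\Elim$, $\exists_\Intro$ and $\exists_\Elim$ transfer unchanged, since $\lnot\lnot(\cdot)$ binds no variable, so $(A\Sub{t}{x})^* = A^*\Sub{t}{x}$ and the undischarged assumptions of $\Pi'$ are the $(\cdot)^*$-images of those of $\Pi$. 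For part~2 the same facts are used in $\Nj$ in place of $\Nm$, where they hold a fortiori, and the $\Efq$ of the target system intervenes only to keep the $\Raa$/$\Efq$ clauses uniform.

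The one genuinely delicate point — and the reason the two parts have different source and target systems — is $\to_\Intro$. When the last rule of $\Pi$ is $\to_\Intro$, the induction hypothesis delivers $\Gamma^*, \lnot\lnot A \vdash \lnot\lnot B$; composing with (i) and discharging by $\to_\Intro$ gives $\Gamma^* \vdash A \to \lnot\lnot B$, and it remains to pass from there to $\Gamma^* \vdash \lnot\lnot(A\to B)$, i.e.\ to invoke the implication $(A\to\lnot\lnot B)\to\lnot\lnot(A\to B)$. As recorded in Remark~\ref{rmk:tointro}, this is derivable in $\Nj$ but \emph{not} in $\Nm$: hence the $\to_\Intro$ case can be closed inside $\Nj\smallsetminus\{\forall_\Intro\}$ in part~2, but for part~1 it must be avoided altogether, which is exactly why there the source is restricted to $\Nk\smallsetminus\{\to_\Intro,\forall_\Intro\}$ and the target to $\Nm\smallsetminus\{\to_\Intro,\forall_\Intro\}$. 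The rule $\forall_\Intro$ is dropped from the source in both parts for the more basic reason that $\lnot\lnot A$ does not constructively yield $\forall x A$, so there is no gadget for it — the same obstruction that, in the body of the paper, forces $\forall$ to be translated as $\lnot\exists\lnot$ rather than simply double-negated. Accordingly, I expect the $\to_\Intro$ clause to be the main obstacle, the rest being a routine, if lengthy, rule-by-rule verification.
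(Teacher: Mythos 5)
Your proposal is correct and follows essentially the same route as the paper's exposition of Tennant's argument: induction on the derivation, a derivable double‑negated version of each rule of the source system, composition with $A \vdash \lnot\lnot A$ to handle discharged hypotheses, and the observation that only the $\to_\Intro$ gadget needs $\Efq$ (hence the asymmetry between the two parts) while $\forall_\Intro$ admits no gadget at all. The only cosmetic difference is that the paper, following Tennant, simplifies the translated $\lnot_\Intro$ and $\Raa$ conclusions up front modulo the minimal equivalences $\lnot\lnot\lnot A \leftrightarrow \lnot A$ and $\lnot\lnot\bot \leftrightarrow \bot$, whereas you keep the literal double negations and strip them with explicit lemmas.
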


In order to better explain how Tennant's idea works, it is easiest to consider a natural deduction setting presented in a sequent calculus style. 
Consider then a rule of $\mathsf{D_c}$ of the form

\begin{center}
\begin{prooftree}
	\Hypo{\Gamma_{1} \vdash A_{1}}
	\Hypo{\ldots}
	\Hypo{\Gamma_{n} \vdash A_{n}}
	\Infer3[\footnotesize$\mathsf{s}$]{\Gamma \vdash A}
\end{prooftree}
\end{center}

\noindent What is first shown by Tennant is that the rule 

\begin{equation}
\begin{prooftree}\label{trans:rule}
	\Hypo{({\Gamma}_{1})^{*} \vdash (A_{1})^{*}}
	\Hypo{\ldots}
	\Hypo{({\Gamma}_{n})^{*} \vdash (A_{n})^{*}}
	\Infer3[\footnotesize$\mathsf{s}^{*}$]{(\Gamma)^{*} \vdash (A)^{*}}
\end{prooftree}
\end{equation}

\noindent is derivable either in $\mathsf{D_m}$ or in $\mathsf{D_i}$.

Consider, for example, the rule

\begin{center}
\begin{prooftree}
	\Hypo{\Gamma \vdash A}
	\Hypo{\Delta \vdash B}
	\Infer2[\footnotesize$\land_\Intro$]{\Gamma, \Delta \vdash A \land B}
\end{prooftree}
\end{center}

\noindent In order to show that the rule 

\begin{center}
\begin{prooftree}
	\Hypo{(\Gamma)^{*} \vdash (A)^{*}}
	\Hypo{(\Delta)^{*} \vdash (B)^{*}}
	\Infer2[\footnotesize$\land^{*}_\Intro$]{(\Gamma)^{*}, (\Delta)^{*} \vdash (A \land B)^{*}}
\end{prooftree}
\end{center}

\noindent is derivable in $\mathsf{D_m}$ or $\mathsf{D_i}$, it is sufficient to consider the following derivation 

\begin{center}
\begin{prooftree}[label separation = 0.2em]
	\Hypo{(\Gamma)^{*} \vdash \neg \neg A}
	\Hypo{(\Delta)^{*} \vdash \neg \neg B}
	\Hypo{}
	\Infer1[\footnotesize$\Ax$]{\neg(A \land B) \vdash \neg(A \land B)}
	\Hypo{}
	\Infer1[\footnotesize$\Ax$]{A \vdash A}
	\Hypo{}
	\Infer1[\footnotesize$\Ax$]{B \vdash B}
	\Infer2[\footnotesize$\land_\Intro$]{A, B \vdash A \land B}
	\Infer2[\footnotesize$\neg_\Elim$]{\neg(A \land B), A, B \vdash \bot}
	\Infer1[\footnotesize$\neg_\Intro$]{\neg(A \land B), A \vdash \neg B}
	\Infer[separation = 0.8em]2[\footnotesize$\neg_\Elim$]{\neg (A \land B), A, (\Delta)^{*} \vdash \bot}
	\Infer1[\footnotesize$\neg_\Intro$]{\neg (A \land B), (\Delta)^{*} \vdash \neg A}
	\Infer[separation = 0.8em]2[\footnotesize$\neg_\Elim$]{\neg(A \land B), (\Gamma)^{*}, (\Delta)^{*} \vdash \bot}
	\Infer1[\footnotesize$\neg_\Intro$]{(\Gamma)^{*}, (\Delta)^{*} \vdash \neg\neg (A \land B)}
\end{prooftree}
\end{center} 

The situation is a little bit more complicated when one has to deal with rules involving a discharge of hypothesis. The problem is that if $A$ is one of the hypotheses to be discharged, then it is not sufficient to translate it into $(A)^{*}$, \Ie~$\neg \neg A$. Otherwise, trying to discharge $\neg \neg A$ will perturb the structure of the derivation, and one will be unable to get the desired conclusion. It then becomes necessary to compose with a proof of $A \vdash \neg \neg A$ in order to carry out the right discharge on $A$.\label{disch}

Consider, for example, the case of the implication introduction rule:

\begin{prooftree*}
	\Hypo{A, \Gamma \vdash B}
	\Infer1[\footnotesize$\to_\Intro$]{\Gamma \vdash A \to B}
\end{prooftree*}

\noindent In order to show that the rule 

\begin{prooftree*}
	\Hypo{(A)^{*}, (\Gamma)^{*} \vdash (B)^{*}}
	\Infer1[\footnotesize$\to^{*}_\Intro$]{(\Gamma)^{*} \vdash (A \to B)^{*}}
\end{prooftree*}

\noindent is derivable, one has to consider the following derivation

\begin{footnotesize}
\begin{prooftree*}[label separation = 0.2em]
	\Infer0[\scriptsize$\Ax$]{\neg(A \to B) \vdash \neg(A \to B)}
	\Infer0[\scriptsize$\Ax$]{\neg A \vdash \neg A}
	\Infer0[\scriptsize$\Ax$]{A \vdash A}
	\Infer[separation = 0.8em]2[\scriptsize$\neg_\Elim$]{\neg A, A \vdash \bot}
	\Infer1[\scriptsize$\neg_\Intro$]{A \vdash \neg \neg A}
	\Hypo{\neg \neg A, (\Gamma)^{*} \vdash \neg \neg B}
	\Infer[separation = -1em]2[{\scriptsize{\textsf{comp}}}]{A, (\Gamma)^{*} \vdash \neg \neg B}
	\Infer0[\scriptsize$\Ax$]{\neg(A \to B) \vdash \neg(A \to B)}
	\Infer0[\scriptsize$\Ax$]{B \vdash B}
	\Infer1[\scriptsize$\to_\Intro$]{B \vdash A \to B}
	\Infer[separation = 0.8em]2[\scriptsize$\neg_\Elim$]{B, \neg(A \to B) \vdash \bot}
	\Infer1[\scriptsize$\neg_\Intro$]{\neg(A \to B) \vdash \neg B}
	\Infer[separation = -0.8em]2[\scriptsize$\neg_\Elim$]{A, \neg(A \to B), (\Gamma)^{*} \vdash \bot}
	\Infer1[\scriptsize$\Efq$]{A, \neg(A \to B), (\Gamma)^{*} \vdash B}
	\Infer1[\scriptsize$\to_\Intro$]{\neg(A \to B), (\Gamma)^{*} \vdash A \to B}
	\Infer[separation = -9em]2[\scriptsize$\neg_\Elim$]{\neg(A \to B), (\Gamma)^{*} \vdash \bot}
	\Infer1[\scriptsize$\neg_\Intro$]{(\Gamma)^{*} \vdash \neg \neg (A \to B)}
\end{prooftree*}
\end{footnotesize}

\noindent where \textsf{comp} is a rule for explicit composition (of derivations), like the one used in \cite{vonPlato16}.

Note that the step of $\Efq$ cannot be avoided. It is for this reason that the $\to_\Intro$ rule has to be dropped in order to obtain Glivenko's theorem for minimal logic (cf. Remark \ref{rmk:tointro}). 

The rule of $\neg_\Intro$ is treated similarly to $\to_\Intro$. However, Tennant works modulo the minimal equivalences $\neg \neg \neg A \leftrightarrow \neg A$ and $\neg \neg \bot \leftrightarrow \bot$; he can thus simplify the form taken by the conclusion  of the translated rule, obtaining

\begin{center}
\begin{prooftree}
	\Hypo{\neg \neg A, (\Gamma)^{*} \vdash \bot}
	\Infer1[\footnotesize$\neg^{*}_{i}$]{(\Gamma)^{*} \vdash \neg A}
\end{prooftree}
\end{center}

Moreover, the translation of the $\Raa$ rule can be simply obtained as a special case of the previous rule, namely by taking $A = \neg B$, \Ie

\begin{center}
\begin{prooftree}
	\Hypo{\neg \neg \neg B, (\Gamma)^{*} \vdash \bot}
	\Infer1[\footnotesize$\Raa^{*}$]{(\Gamma)^{*} \vdash \neg \neg B}
\end{prooftree}
\end{center}

Once the rules of classical logic have been translated into derivable rules of minimal or of intuitionistic logic of the form we just described, it is easy to show that each derivation $\Pi : \Gamma \vdash A$ in classical logic can be translated into a derivation $\Pi' : (\Gamma)^{*} \vdash (A)^{*}$ in minimal or intuitionistic logic, by a step-by-step translation of each rule of $\Pi$. More precisely, the proof of Theorem \ref{thm:tennant} is given by induction on the length of the proof $\Pi : \Gamma \vdash A$ (\cite[p. 273]{Tennant87}). 
The base step is that of the axiom rule, which is trivial. The inductive step depends on the last rule $\mathsf{s}$ applied in $\Pi$. The inductive hypothesis guarantees, for each subderivation $\Pi_{i}$ of $\Pi$, a derivation $\Pi'_{i}$ of the desired form. Then, in order to obtain $\Pi'$, it is sufficient to apply the translation of the rule $\textsf{s}$ to these derivations $\Pi'_{i}$.

In some particular situations, $\Pi$ could already contain a subderivation $\Pi_{i}$ of the same conclusion $\neg \neg A$ (\Ie~$(A)^{*}$) of the desired derivation $\Pi'$, but from a set of assumptions which is only a subset of $(\Gamma)^{*}$. The two derivations $\Pi'_{i}$ and $\Pi'$ are thus the same, modulo some applications of the weakening rule. One could then simply take the subderivation $\Pi'_{i}$, instead of reconstructing $\Pi'$ step-by-step from $\Pi$. It is for this reason that Tennant formulates Theorem \ref{thm:tennant} by considering a derivation $\Pi'$ of $(\Gamma')^{*} \vdash (A)^{*}$, where $\Gamma' \subseteq \Gamma$.\footnote{Note also that Tennant proves Glivenko's theorem for the relevant versions of minimal and intuitionistic logic. However, we will not discuss this point here, since our article simply focuses on standard logical systems, and not on relevant ones.} 

Note that if in the case of minimal logic we restrict to the fragment $\{\neg, \land, \lor, \bot, \exists\}$, and in the case of intuitionistic logic to the fragment $\{\neg, \land, \lor, \allowbreak \to, \bot, \exists\}$, then the translation function $(\cdot)^{*}$ is nothing but the Kuroda translation (see \S\ref{sect:Glivenko}). More precisely, in the case of minimal logic we have that $(A)^{*} = \neg \neg (A)^{\Transl}$, while in the case of intuitionistic logic $(A)^{*} = \neg \neg (A)^{\Translj}$. In particular, when the translations $(\cdot)^{\Transl}$ and $(\cdot)^{\Translj}$ are applied to the two aforementioned fragments, they behave like the identity function, keeping the formulas on which they are applied invariant (see Definition \ref{def:translation}). 
Thus, in order to obtain a result analogue to Theorems \ref{thm:glivenko}.\ref{thm:glivenko.nottranslated} and \ref{thm:glivenko-intuitionist}.\ref{thm:glivenko-intuitionist.nottranslated}., it would be sufficient to apply Theorem \ref{thm:tennant} to a certain derivation $\Pi : \Gamma \vdash A$, obtain then a derivation $\Pi' : (\Gamma)^{*} \vdash (A)^{*}$, and finally compose it with the derivation

\begin{equation}
\label{doubleint}
\begin{prooftree}
	\Hypo{}
	\Infer1[\footnotesize Ax]{\neg C \vdash \neg C}
	\Hypo{}
	\Infer1[\footnotesize Ax]{C \vdash C}
	\Infer2[\footnotesize$\neg_\Elim$]{\neg C, C \vdash \bot}
	\Infer1[\footnotesize$\neg_\Intro$]{C \vdash \neg \neg C}
\end{prooftree}
\end{equation}

\noindent for every formula $\neg \neg C$ present in $(\Gamma)^{*}$. In this way a new derivation $\Pi'' : \Gamma \vdash (A)^{*}$ is obtained (see \cite[p.~274]{Tennant87}).\footnote{Note that this is very similar to the procedure we already discussed in order to deal with the translation of rules that discharge hypothesis (see p.~\pageref{disch}, \textit{supra}).} 

Tennant remarks that if we do not want to drop the universal quantifier -- both in the case of minimal and intuitionistic logic -- then we have to proceed in the following way: for the $\forall_\Elim$ rule, one can simply translate it according to the the schema (\ref{trans:rule}), while for the $\forall_\Intro$ rule, one has to proceed inductively (see \cite[p.~247]{Tennant87}). Consider a derivation $\Pi : \Gamma \vdash \forall{x}A$ in $\Nk$, ending with $\forall_\Intro$. By the induction hypothesis, we can apply Theorem \ref{thm:tennant} to the subderivation $\Pi_{1} : \Gamma \vdash A$ and obtain the derivation $\Pi'_{1} : (\Gamma)^{*} \vdash (A)^{*}$. Apply then the $\forall_\Intro$ rule, and finally add a double negation using (\ref{doubleint}). In other words, the derivation

\begin{equation*}
\Pi \ = \ 
\begin{prooftree}
	\Hypo{}
	\Ellipsis{$\Pi_{1}$}{\Gamma \vdash A}
	\Infer1[\footnotesize$\forall_\Intro$]{\Gamma \vdash \forall{x}A}
\end{prooftree}
\end{equation*}

\noindent of $\Nk$ is transformed into the derivation

\begin{equation*}
\Pi' \ = \ 
\begin{prooftree}
	\Hypo{}
	\Infer1{\neg \forall{x}(A)^{*} \vdash \neg \forall{x}(A)^{*}}
	\Hypo{}
	\Ellipsis{$\Pi'_{1}$}{(\Gamma)^{*} \vdash (A)^{*}}
	\Infer1[\footnotesize$\forall_\Intro$]{(\Gamma)^{*} \vdash \forall{x}(A)^{*}}
	\Infer2[\footnotesize$\neg_\Elim$]{(\Gamma)^{*}, \neg \forall{x}(A)^{*} \vdash \bot}
	\Infer1[\footnotesize$\neg_\Intro$]{(\Gamma)^{*} \vdash \neg \neg \forall{x}(A)^{*}}
\end{prooftree}
\end{equation*}

\noindent of either $\Nm \smallsetminus \{\to_\Intro, \to_\Elim\}$ or $\Nj$.

This seems to suggest that universal formulas can be translated in the following way: $(\forall{x}A)^{*} = \neg \neg \forall{x}(A)^{*}$, which corresponds to the original Kuroda translation, since $\neg \neg \forall{x}(A)^{*}$ corresponds either to $\neg \neg \forall{x}(A)^{\Transl'}$ or to $\neg \neg \forall{x}(A)^{\Translj'}$ (for the definition of $(\cdot)^{\Transl'}$ and $(\cdot)^{\Translj'}$ see p.~\pageref{kurodaoriginal}). 

However, this is not the case. The reason is that, by following Tennant's idea, the way in which universal formulas are translated is not a uniform one, but it depends on their position inside a derivation. In particular, it depends on whether they are in the position of hypothesis or are the conclusion of a $\forall_\Intro$. In the first case, only a double negation is put in front of them, while in the second case a double negation is also put in front of the quantified formula. This means that when the universal quantifier is added to the language, the translation set up by Tennant is no longer functional at the level of formulas. 

This same situation would occur if one tried to adapt Tennant's treatment of the universal quantifier in order to deal with implication in minimal logic: it would be possible to prevent the loss of implication, but the price to pay would be to have a translation from $\Nk$ to $\Nm$ which is not functional at the level of formulas.

\subsection{An alternative reduction strategy}
\label{sect:alternative}

As we already noticed (see Remark~\ref{rmk:tointro}), the reduction steps that we defined in \S\ref{sect:rewrite} make essential use of intuitionistic logic only in the $\to_\Intro$ case\,---\,where the $\Efq$ rule is explicitly introduced for defining the reduction step \eqref{eq:tointro} at p.~\pageref{tointroefq}\,---\,while in all the other cases the appeal to minimal logic is already sufficient for defining the subderivations preceding the application of the $\Raa$ rule. It is for this reason that in order to prove Glivenko’s theorem for minimal logic, we had to give up implication, and define a translation $(\cdot)^\Transl$ such that $(A \to B)^\Transl =  \lnot A^\Transl \lor B^\Transl$. 

We show here that, in fact, implication could be preserved, but the price to pay is the imposition of important restrictions on the form of the consequent of an implication, so that a certain uniformity and generality of the translation is lost.

Consider the following reduction:

{\small
  \begin{align}\label{eq:tointronew}
    \Pi = 
    \begin{prooftree}
      \Hypo{\ulcorner A \urcorner^2, \ulcorner \lnot B \urcorner^1}
      \Ellipsis{$\pi'$}{\; \bot \;}
      \Infer1[\footnotesize$\Raa^1$]{B}
      \Infer1[\footnotesize$\to_\Intro^2$]{A \to B}
      \Ellipsis{$\pi$}{}
    \end{prooftree}
      \quad
      &\leadsto
      \quad
    \begin{prooftree}[label separation = 0.2em]
      \Hypo{\ulcorner \lnot (A \to B) \urcorner^3}
%       \Infer1[\eqref{eq:dn}]{\lnot (A \to \lnot\lnot B)}
	\Hypo{\ulcorner A \urcorner^2,}
	  \Hypo{\ulcorner \lnot (A \to B) \urcorner^3}
	    \Hypo{\ulcorner B \urcorner^1}
	    \Infer1[\footnotesize$\to_\Intro^0$]{A \to B}
	  \Infer[separation = 0.8em]2[\footnotesize$\lnot_\Elim$]{\bot}
	\Infer1[\footnotesize$\lnot_\Intro^1$]{\lnot B}
	\Infer[no rule,separation=-1.0em]2{}
	\Ellipsis{$\pi'$}{\; \bot \;}
	\Infer1[\footnotesize$\lnot_\Intro^0$]{\lnot \lnot B}
	\Infer[dashed]1[\footnotesize\eqref{eq:dn}]{B}
	\Infer1[\footnotesize$\to_\Intro^2$]{A \to B}
      \Infer[separation = -0.5em]2[\footnotesize$\lnot_\Elim$]{\bot}
      \Infer1[\footnotesize$\Raa^3$]{A \to B}
      \Ellipsis{$\pi$}{}
    \end{prooftree}
    \!\!\!\!\!\! = \Pi'
  \end{align}
}

\noindent The rule (\ref{eq:dn}) is a derivable rule, obtained by appealing to the following theorem of minimal logic:
\begin{equation}
\label{eq:dn}
\vdash_\Nm B \leftrightarrow \lnot\lnot B
\end{equation}
where $B$ is a \textit{negative formula}, \Ie~atomic formulas occur only negated in $B$, and $B$ does not contain $\vee$ nor $\exists$ (see \cite[p. 48]{TroelstraSchwichtenberg00}). 

According to this new reduction, before applying the $\Raa$ rule, only inferential steps coming from minimal logic are used. 
However, such a reduction can be applied only when the consequent of the implication is a negative formula. 
The consequence is that, even if we can extract a translation from classical to minimal logic from this new reduction, this translation will be neither uniform nor general like the one given in Definition~\ref{def:translation}.

More precisely, when we restrict to the propositional case, the translation induced by replacing the reduction rule \eqref{eq:tointro} at p.~\pageref{tointroefq} with this new one corresponds to a translation $(\cdot)^{\Transl^{*}}$ which behaves like $(\cdot)^\Transl$, except for the implication, which is defined as follows:
\begin{align*}
  (A \to B)^{\Transl^{*}} =  A^{\Transl^{*}} \to \neg\neg B^{\Transl^{*}} & &\textup{(where $B$ is a negative formula)} 
\end{align*}

The same considerations can be made in the case of the $\forall_\Intro$ rule. In particular, we could use the following reduction:

{\small
  \begin{align}\label{eq:forallintronew}
    \Pi = 
    \begin{prooftree}
      \Hypo{\ulcorner \neg A \urcorner^1}
      \Ellipsis{$\pi'$}{\; \bot \;}
      \Infer1[\footnotesize$\Raa^1$]{A}
      \Infer1[\footnotesize$\forall_\Intro$]{\forall{x}A}
      \Ellipsis{$\pi$}{}
    \end{prooftree}
      \qquad
      &\leadsto
      \qquad
    \begin{prooftree}
%       \Infer1[\eqref{eq:dn}]{\lnot (A \to \lnot\lnot B)}
	\Hypo{\ulcorner \neg A \urcorner^1}
	\Ellipsis{$\pi'$}{\; \bot \;}
	\Infer1[\footnotesize$\lnot_\Intro^1$]{\lnot \lnot A}
	\Infer[dashed]1[\footnotesize\eqref{eq:dn}]{A}
	\Infer1[\footnotesize$\forall_\Intro$]{\forall{x}A}
      \Ellipsis{$\pi$}{}
    \end{prooftree}
    = \Pi'
  \end{align}
}

\noindent and then define $(\cdot)^{\Transl^{*}}$ for the universal quantifier as follows:
\begin{align*}
  (\forall{x}A)^{\Transl^{*}} =  \forall{x}(A)^{\Transl^{*}} & &\textup{(where $A$ is a negative formula)} 
\end{align*}

This new translation $(\cdot)^{\Transl^{*}}$ is the same as the one defined in \cite[p.~249]{FerreiraOliva12},\footnote{In fact, this translation was already present in \cite[p. 159]{Murthy90}, but only for the intuitionistic case.} except for the restriction about negative formulas. The need of this restriction seems to be explained by the fact the our translation is \textit{directly} defined from classical to minimal logic (at the level of proof reduction), while in \cite[\S6]{FerreiraOliva12} it is obtained by making an intermediary step through intuitionistic logic (at the level of formulas): first, classical logic is embedded into intuitionistic logic via the Kolmogorov translation; secondly, the translated formulas obtained in this way are embedded into minimal logic via a set of simplification rules definable in minimal logic itself that reduce the number of negations present in a formula. In particular, passing through the Kolmogorov translation allows one to put negations in front of atomic formulas and to eliminate the occurrences of $\vee$ or $\exists$ by using equivalences provable in minimal logic, like $\neg (\neg \neg A \vee \neg \neg B) \leftrightarrow \neg A \wedge \neg B$ or $\neg \exists{x}\neg \neg A \leftrightarrow \forall{x}\neg A$. In this way, the condition on \eqref{eq:dn} is always respected, and thus no restrictions have to be explicitly imposed.

%\end{document}
 %A3

\AFFILIATIONS %See preamble

\end{document}